\documentclass[preprint,11pt]{elsarticle}
\usepackage[toc,page]{appendix}
\usepackage{graphicx} 
\usepackage{amsmath,amsfonts,amsthm,amssymb,amscd}
\usepackage{lipsum}
\usepackage{amsfonts}
\usepackage{graphicx}
\usepackage{epstopdf}
\usepackage{algorithmic}
\usepackage{appendix}
\usepackage{subfig}
\usepackage{graphicx}
\usepackage{color}
\usepackage{cancel}

\usepackage{hyperref}
\usepackage{xcolor}
\usepackage{cleveref}
\usepackage{bm}
\usepackage{soul}
\usepackage[normalem]{ulem}

\usepackage{enumitem}
\usepackage{fullpage}
\usepackage{listings}
\usepackage{dsfont}
\usepackage{booktabs} 
\usepackage{multirow} 
\usepackage{diagbox}
\usepackage{tablefootnote}

\newcommand{\cN}{\mathcal{N}}
\newcommand{\U}{\mathcal{U}}
\newcommand{\A}{\mathcal{A}}

\newcommand{\G}{\mathcal{G}}
\newcommand{\cF}{\mathcal{F}}
\newcommand{\cK}{\mathcal{K}}

\newcommand{\C}{\mathbb{C}}

\newcommand{\PP}{\mathcal{P}}

\newcommand{\I}{I}
\newcommand{\dd}{\mathrm{d} }
\newcommand{\R}{\mathbb{R}}

\newcommand{\Z}{\mathbb{Z}}

\newcommand{\cL}{\mathcal{L}}

\newcommand{\cR}{\mathcal{R}}

\newcommand{\bigO}{\mathcal{O}}

\DeclareMathOperator*{\argmin}{arg\,min}

\newtheorem{lemma}{Lemma}

\newtheorem{theorem}{Theorem}

\definecolor{darkred}{rgb}{.6,0,0}
\definecolor{darkblue}{rgb}{0,0,.7}
\definecolor{darkgreen}{rgb}{0,.7,0}
\definecolor{update}{rgb}{0.8,,0}

\makeatletter
\def\ps@pprintTitle{%
   \let\@oddhead\@empty
   \let\@evenhead\@empty
   \def\@oddfoot{\reset@font\hfil\thepage\hfil}
   \let\@evenfoot\@oddfoot
}
\makeatother

\begin{document}

\begin{abstract}
Surrogate models are critical for accelerating computationally expensive simulations in science and engineering, particularly for solving parametric partial differential equations (PDEs). Developing practical surrogate models poses significant challenges, particularly in handling geometrically complex and variable domains, which are often discretized as point clouds.
In this work, we systematically investigate the formulation of neural operators---
maps between infinite-dimensional function spaces---on point clouds to better handle complex and variable geometries while mitigating discretization effects. We introduce the Point Cloud Neural Operator (PCNO), designed to efficiently approximate solution maps of parametric PDEs on such domains.
We evaluate the performance of PCNO on a range of pedagogical PDE problems, focusing on aspects such as boundary layers, adaptively meshed point clouds, and variable domains with topological variations. Its practicality is further demonstrated through three-dimensional applications, such as predicting pressure loads on various vehicle types and
simulating the inflation process of intricate parachute structures.

\end{abstract}
\begin{keyword}
    Parametric Partial Differential Equations, Surrogate Modeling,  Neural Networks,  Computational Mechanics, Complex Geometries.
\end{keyword}

\begin{frontmatter}

  \title{Point Cloud Neural Operator for Parametric PDEs on Complex and Variable Geometries}

\author[rvt1]{Chenyu Zeng\corref{cor1}}
  \ead{zengyu@stu.pku.edu.cn}
  
  \author[rvt1]{Yanshu Zhang\corref{cor1}}
  \ead{yanshu@stu.pku.edu.cn}
  
  \author[rvt1]{Jiayi Zhou\corref{cor1}}
  \ead{jiayi22@stu.pku.edu.cn}
  
  \author[rvt1]{Yuhan Wang}
  \ead{wyhan@stu.pku.edu.cn}
  
  \author[rvt1]{Zilin Wang}
  \ead{wangzilin@stu.pku.edu.cn}
  
  \author[rvt2]{Yuhao Liu}
  \ead{yh-liu21@mails.tsinghua.edu.cn}
  
  \author[rvt1,rvt3]{Lei Wu}
  \ead{leiwu@math.pku.edu.cn}

  \author[rvt4,rvt3]{Daniel~Zhengyu~Huang\corref{cor2}}
  \ead{huangdz@bicmr.pku.edu.cn}
  \cortext[cor1]{Equal contribution, listed in alphabetical order.}
  \cortext[cor2]{Corresponding author.}

  \address[rvt1]{School of Mathematical Sciences, Peking University, Beijing, China}
  \address[rvt2]{Department of Mathematical Sciences, Tsinghua University, Beijing, China}
  \address[rvt3]{Center for Machine Learning Research, Peking University, Beijing, China}
  \address[rvt4]{Beijing International Center for Mathematical Research, Peking University, Beijing, China}

\end{frontmatter}

\section{Introduction}
The development of efficient and accurate approximations for  computationally expensive simulations, also known as surrogate models, is crucial for addressing many query problems in computational science and engineering. 
These simulations often arise from the numerical solution of partial differential equations (PDEs).
Applications include engineering design \cite{jameson1988aerodynamic,mack2007surrogate,martins2013multidisciplinary,bendsoe2013topology,amsallem2015design,economon2016su2,du2021rapid},  where timely evaluations of candidate designs are critical; 
model predictive control \cite{garcia1989model,peitz2018survey,korda2018linear,kapteyn2021probabilistic,mcclellan2022physics}, which demands computational efficiency to enable real-time decision-making; and uncertainty quantification \cite{gilks1995markov,kennedy2001bayesian,xiu2002wiener,stuart2010inverse,schobi2015polynomial,ghanem2017handbook,zhu2019physics,chen2024efficient}, where thousands of forward simulations are required to reliably assess uncertainty. In this work, we specifically focus on the development of surrogate models for parametric PDEs.

Consider an abstract parametric PDE defined on $\Omega \subset \R^{d}$
\begin{equation}
\label{eq:abstract-pde}
    \cR(u, a) = 0, 
\end{equation}
where $\cR$ denotes a generalized differential operator, $a:\R^d \rightarrow \R^{d_a}$ represents the parameter function, and 
 $u(x): \R^d \rightarrow \R^{d_u}$ represents the solution function. Solving \eqref{eq:abstract-pde} gives the solution map 
\begin{align}
\label{eq:G-continuous-intro}
    \mathcal{G^{\dagger}} : (a, \Omega) \mapsto u.
\end{align}
The goal of surrogate modeling is to accelerate the inference of the solution map in \eqref{eq:G-continuous-intro}. 
However, developing efficient and accurate PDE-based surrogate models presents several \textbf{key challenges}. 
First, the inputs and outputs of the model, $a$ and $u$, are often high-dimensional or, in some cases, infinite-dimensional, particularly when full-field PDE solutions are required.
Second, the computational domain $\Omega$ is often complex, and in applications such as engineering design or scenarios involving variable geometries, both the computational domain and its discretization can vary significantly.  
The limitations and progress of various surrogate modeling approaches in addressing these challenges are reviewed in \cref{ssec:literature}.
To overcome these challenges, this study focuses on improving neural network-based surrogate models, given their proven effectiveness in approximating high-dimensional functions and functionals~\cite{barron1993universal, lecun2015deep, vaswani2017attention, kingma2013auto, song2020score, achiam2023gpt}. Additionally, modern software packages such as PyTorch~\cite{paszke2019pytorch} and JAX~\cite{bradbury2018jax} offer automatic differentiation, which facilitates further applications, including optimization-based engineering design and gradient-based sampling algorithms for uncertainty quantification.

The objective of this work is to develop a neural operator~\cite{zhu2018bayesian,khoo2019switchnet,li2020fourier,lu2021learning} based surrogate model that employs a neural network to approximate $\G^{\dagger}$ described in \eqref{eq:G-continuous-intro} in a manner that is based on the following distinct \textbf{contributions}:
\begin{enumerate}
\item We systematically investigate the formulation of neural operators, which combines integral and differential operators~\cite{li2020fourier,liu2024neural}, on point clouds. By designing at the continuous level and subsequently leveraging traditional numerical methods for discretization, this approach ensures robust handling of complex and variable geometries with arbitrary discretizations, achieving small mesh-size-dependent errors and significantly reducing the need for hyperparameter tuning.

\item We introduce the Point Cloud Neural Operator (PCNO), a surrogate modeling framework designed for parametric PDEs on complex and variable geometries. It features point permutation invariance and universal approximation capabilities, while maintaining linear complexity with respect to the number of points.

\item We demonstrate that PCNO effectively learns operators between function spaces on complex and variable geometries for a variety of PDE problems, including those involving boundary layers, adaptive mesh data, and topological variations. Furthermore, it achieves outstanding performance in large-scale three-dimensional applications, such as predicting pressure loads on different vehicles and simulating the inflation process of intricate parachute structures.
\end{enumerate}

\subsection{Literature Review}
\label{ssec:literature}

Surrogate models aim to approximate the solution map described in \eqref{eq:G-continuous-intro}. A prominent class of such models includes projection-based reduced-order models~\cite{willcox2002balanced,antoulas2005approximation,quarteroni2014reduced,benner2015survey,benner2017model}, where orthogonal bases, such as those derived from Proper Orthogonal Decomposition (POD)~\cite{berkooz1993proper,golub1965calculating} or Koopman modes \cite{koopman1931hamiltonian,mezic2004comparison}, are constructed. The original PDE is projected onto the low-dimensional subspace spanned by these bases, enabling efficient computation of the solution. This projection process can be implemented intrusively by incorporating the governing PDE information from \eqref{eq:abstract-pde}, leading to a reliable and accurate reduced-order model~\cite{chaturantabut2010nonlinear,carlberg2013gnat,carlberg2011efficient}. Alternatively, reduced models can be learned directly from data, resulting in data-driven approaches. These methods encompass a wide range of techniques, including linear~\cite{schmid2010dynamic,rowley2009spectral,kutz2016dynamic} and quadrature~\cite{peherstorfer2016data,qian2020lift} approximations, Gaussian process models~\cite{williams2015data,guo2018reduced}, and fully connected neural networks~\cite{hesthaven2018non,lee2020model,bhattacharya2021model,barnett2022neural}, all of which aim to approximate the operator map in \eqref{eq:G-continuous-intro}. 
Despite their effectiveness, these approaches often struggle to adapt to changes in the computational domain, $\Omega$. The reduced basis typically cannot accommodate variations in discretization or domain geometry, limiting their applicability to the variable computational domain considered in this work.

Neural networks hold great promise for developing efficient surrogate models for PDE problems, due to their ability to approximate high-dimensional functions effectively. Two primary strategies are commonly employed to create neural network-based surrogate models capable of handling complex and variable computational domains: the \textbf{domain deformation strategy} and the \textbf{point cloud strategy}.

The first approach involves constructing a reference domain $\Omega_{\rm ref}$ and a parametric deformation map $\psi$ that transforms the reference domain into parameterized computational domains $\Omega$. This concept, inspired by the Lagrangian description of problems, is widely used in adaptive mesh methods \cite{tang2003adaptive,huang2010adaptive}, where the reference mesh dynamically evolves in accordance with the underlying physics.
By reformulating the PDE on the fixed reference domain, the geometry information is encoded in the deformation map, and the mapping \eqref{eq:G-continuous-intro} becomes
\begin{align*}
    \mathcal{G^{\dagger}} : (a \circ \psi, \psi) \mapsto u \circ \psi \quad \textrm{ on } \quad \Omega_{\rm ref},
\end{align*}
where the functions are all defined on $\Omega_{\rm ref}$.
Combining this domain deformation strategy with the Fourier Neural Operator (FNO) results in GeoFNO \cite{li2023fourier}. This approach has also been integrated with the physics-informed neural network (PINN) framework \cite{gao2021phygeonet, cao2024solving} and the DeepONet framework \cite{yin2024dimon, xiao2024learning} to improve accuracy and training efficiency.
However, constructing the parametric deformation map $\psi$ is challenging, particularly when the parameterized computational domains $\Omega$ exhibit varying topologies (See numerical examples in \cref{ssec:airfoil_flap}).

An alternative approach avoids the need for constructing parametric deformation maps by representing complex and variable computational domains $\Omega$ as point clouds, leveraging geometric learning techniques to directly process these domains. 
Point clouds inherently capture the geometry of 
$\Omega$, enabling methods such as PointNet \cite{qi2017pointnet} and its extensions \cite{qi2017pointnet++, li2018pointcnn} to extract features for classification and segmentation. 
Frameworks like DeepONet \cite{lu2021learning, goswami2022deep} and transformers \cite{cao2021choose,junfengpositional} have also been extended to use point cloud representations for developing surrogate models with complex but fixed computational domains (See \cref{ssec:transformer}).
Furthermore, graph neural networks (GNNs) \cite{scarselli2008graph, kipf2016semi} provide an alternative for encoding point cloud data, integrating geometric features and connectivity information. GNNs offer a powerful framework for constructing PDE-based surrogate models \cite{pfaff2020learning, liu2024laflownet,gao2025generative,shen2025vortexnet}, but their performance, is highly dependent on mesh resolution and necessitates careful adjustment of the message-passing depth whenever the resolution changes. This can be mitigated through mesh-independent averaging aggregation, which leads to graph neural operator \cite{li2020neural, li2020multipole} (See \cref{ssec:GNO}). Another strategy involves embedding point clouds within a bounding box and using zero padding to reformulate the problem on a regular grid, allowing for the direct application of Fourier Neural Operator~\cite{liu2024domain, li2024geometry} or convolution-based neural networks. However, this embedding generally requires careful interpolation or extrapolation based on the mesh resolution.
Beyond the direct use of point clouds, geometric features can be further enriched using techniques like signed distance functions~\cite{he2024geom, ye2024pdeformer,duvall2025discretization}, which encode the geometry as a scalar field, or advanced geometric encoders such as Sinusoidal Representation Networks~\cite{sitzmann2020implicit}. These approaches extract detailed geometric features and have been applied in PDE-based surrogate modeling~\cite{serrano2023operator, wu2024transolver}.
However, their generalization performance across varying discretizations or resolutions remains unclear.
The present work aims to systematically explore the formulation of neural operators for point clouds, with a focus on achieving invariance to or mitigating the impact of discretization and resolution. These properties enable robust handling of complex and variable computational domains with arbitrary discretizations, thereby reducing the need for hyperparameter tuning. Notably, all tests presented in this work utilize the same architecture and identical hyperparameters, such as the number of channels and network depth.

\subsection{Organization}
\label{ssec:over}
The remainder of the paper is organized as follows.  In \cref{sec:neural-operator}, we provide a comprehensive overview of neural operators. \Cref{sec:pcno} introduces the proposed Point Cloud Neural Operator (PCNO), followed by the related theoretical analysis in  \cref{sec:theory}. Numerical experiments are presented in \cref{sec:numerics} and concluding remarks are provided in \cref{sec:conclusion}.

\section{Neural Operator}
\label{sec:neural-operator}

In this section, we provide a concise overview of neural operators, which are neural network-based surrogate models designed to approximate mappings where both inputs and outputs are functions.
The primary focus of this work is the mapping
\begin{equation}
\label{eq:G-continuous}
    \mathcal{G^{\dagger}} : (a, \Omega) \mapsto u.
\end{equation} 
Here  $a$ represents the parameter function (which includes source terms, boundary conditions, and other factors), $\Omega$ denotes the computational domain in $\R^d$, and $u$ represents the solution function. Generally, the parameter function space is denoted as $\A =\{a: \Omega \rightarrow \R^{d_a}\}$, and the solution function space as $\U =\{u:\Omega\rightarrow \R^{d_u}\}$; both are separable Banach spaces.
Neural operator approach aims to approximate $\mathcal{G^{\dagger}}$ using a parametric neural network
$G_{\theta}$, where  $\theta \in \Theta$ is assumed to be a finite dimensional parameter. To train neural operators, we consider the following standard data scenario. The input data $\{a_i,\Omega_i\}_{i=1}^{n}$ is sampled from certain distribution $\mu$, and the corresponding outputs are given by $\{u_i = \G^{\dagger}(a_i,\Omega_i)\}_{i=1}^n$. 
Given a cost function $c: \U \times \U \rightarrow \R$, and the dataset $\{a_i, \Omega_i,  u_i\}_{i=1}^{n}$, the optimal $\theta^{*}$ is obtained by solving the optimization problem:
\begin{equation}
\label{eq:training}
    \theta^* = \argmin \frac{1}{n}\sum_{i=1}^n c(\G_{\theta}(a_i, \Omega_i), u_i).
\end{equation}
The goal is for 
$G_{\theta^{*}}$ to approximate 
$\mathcal{G^{\dagger}}$ effectively within the distribution $\mu$.

In practical applications, the computational domain $\Omega$ is often represented by a point cloud $X$, with the connectivity between points  defining the mesh structure.
The functions $a$ and $u$ are typically available only at the discrete points of $X$. Neural operators, however, are  designed to approximate the operator \eqref{eq:G-continuous} at the continuous level, as a mapping between function spaces, rather than at the discrete points. This framework grants neural operators with discretization invariant properties, allowing the learned operator to generalize robustly across different discretizations.

In general, to approximate the operator in \eqref{eq:G-continuous}, each layer of neural operator $\cL$  combines pointwise linear functions, integral operators and a nonlinear pointwise activation function, which maps the input function $f_{\rm in}: \Omega \rightarrow \R^{d_{\rm in}}$ to the output function $f_{\rm out}: \Omega \rightarrow \R^{d_{\rm out}}$, such as 
\begin{equation}
\begin{split}
    \label{eq:integral-operator}
    &\cL : f_{\rm in} \mapsto f_{\rm out}, \\
    &f_{\rm out}(x) =  \sigma \Bigl(W^{l} f_{\rm in}(x) + b + \int_{\Omega} \kappa\bigl(x, y, f_{\rm in}\bigr)  W^{v} f_{\rm in}(y) dy\Bigr).
\end{split}
\end{equation}
Here, $W^{l}\in\R^{d_{\rm out}\times d_{\rm in}}$ and $b\in\R^{d_{\rm out}}$ define a pointwise local linear function.\footnote{ In general, the integral operator with continuous kernel between two infinite dimensional spaces (e.g., continuous function space $C(\Omega)$) is a compact operator, however identity mapping between two infinite dimensional spaces is not \cite[Section 2]{kress1989linear}. The pointwise local linear function is able to represent such identity mappings and hence is necessary to improve the performance.}
Note that $d_{\rm in}$ and $d_{\rm out}$ refer to the number of channels of the input and output functions, respectively. Such a linear function is also used to increase the number of channels $d_a$ of the input function $a$ in the initial lifting layer. This is crucial because a higher-dimensional channel space can capture more complex features and make the relationships more tractable~\cite{koopman1931hamiltonian,mezic2005spectral,qian2020lift,huang2024operator}. In the final projection layer, a linear function projects the output back to the desired number of channels $d_u$.
$W^{v} \in \R^{d_{\rm out}\times d_{\rm in}}$ parameterizes the integral operator, which is defined by the kernel $\kappa(x, y, f_{\rm in}):\Omega \times \Omega \times (\Omega \rightarrow \R^{d_{\rm in}}) \rightarrow \R$. 
The integral operator in \eqref{eq:integral-operator} can be seen as an analogy to matrix multiplication in the context of neural networks operating on finite-dimensional vector spaces. 
The pointwise nonlinear activation function $\sigma$, such as the Gaussian Error Linear Unit (\texttt{GeLU})\cite{hendrycks2016gaussian}, acts on each point and each channel. It is worth noting that the operator $\cL$ described in \eqref{eq:integral-operator} provides a nonlinear parametric mapping between function spaces. 
The design of the neural operator is carried out at the continuous level and then discretized. 

Next, we review several specific integral operators utilized in the Fourier Neural Operator~(FNO), Graph Neural Operator (GNO), and Transformer. 

\subsection{Fourier  Neural  Operator}
\label{ssec:FNO}
The  Fourier Neural Operator~\cite{li2020fourier,kovachki2023neural,nelsen2021random,kossaifi2024library} considers a series of Fourier kernel functions defined as 
$\kappa_k(x, y) = e^{2\pi i k \cdot (x - y)} \quad k\in \Z^d$,
which can approximate positive semidefinite translationally invariant kernels \cite{bochner1933monotone,rahimi2007random}. This leads to the following Fourier integral operator:
\footnote{
The formula for the Fourier integral operator in the literature~\cite{li2020fourier,de2022cost} is typically written as $f_{\rm out} = \cF^{-1} \Bigl(R \cdot \bigl(\cF f_{\rm in}\bigr)\Bigr) $, where $\cF$ denotes the Fourier transform for each channel. This maps the periodic function $f_{\rm in}: \Omega \rightarrow \R^{d_{\rm in}}$ to Fourier coefficients, $\bigl(\cF f_{\rm in} \bigr) (k) \in \C^{d_{\rm in}}$, generally truncated at $k_{\rm max}$ modes. $R$ define $k_{\rm max}$ complex-valued matrices $R(k) \in \R^{d_{\rm out} \times d_{\rm in}}$, which is applied on each of the Fourier modes as $R(k) \bigl(\cF f_{\rm in}\bigr) (k) \in \R^{d_{\rm out}}$.
Finally, $\cF^{-1}$ denotes the inverse Fourier transform for each channel, resulting in $f_{\rm out}: \Omega \rightarrow \R^{d_{\rm out}}$. In our equivalent formulation in \eqref{eq:FNO}, we separate each Fourier mode, denote $W_{k}^v = R(k)$, and use the fact that $W_{k}^v \bigl(\cF f_{\rm in}\bigr)(k) = \bigr(\cF (W_{k}^v  f_{\rm in})\bigr)(k)$.
}
\begin{align}
\label{eq:FNO}
    f_{\rm out}(x) = \sum_{k}\int_{\Omega} \kappa_k(x, y)  W_k^{v} f_{\rm in}(y) dy,
\end{align}
where each frequency $k$ corresponds to a complex parameter matrix $W_k^{v}\in \C^{d_{\rm out}\times d_{\rm in}}$. The integral \eqref{eq:FNO} can be efficiently evaluated using the Fast Fourier Transform, achieving quasi-linear complexity with respect to the number of mesh points when $\Omega$ is a hypercube discretized on a structured grid.  Using the fact that the Fourier kernel has a low-rank representation $\kappa_k(x, y) = \phi_k(x) \overline{\phi_k(y)}$, where $\phi_k(x) = e^{2\pi i k \cdot x}$, and focusing only on low-frequency modes $k$, the Fourier Neural Operator restricted to low frequencies can generalize beyond regular grids \cite{lingsch2023beyond} with linear complexity using the following formulation
\begin{align}
\label{eq:kernel-sep}
\int_{\Omega} \kappa_k(x, y)  W_k^{v} f_{\rm in}(y) dy = 
\phi(x) \int_{\Omega}  \overline{\phi(y)} W_k^{v} f_{\rm in}(y) dy = 
e^{2\pi i k \cdot x} \int_{\Omega}  e^{-2\pi i k \cdot y} W_k^{v} f_{\rm in}(y) dy.  
\end{align}
More generally, any serials of kernels $\kappa_k(x, y) = \phi_k(x) \phi_k(y)$ with a low-rank representation can achieve linear computational complexity. For example, in \cite{chen2023learning}, the authors study $\phi_k(x)$ as either the $k$-th eigenvector of the Laplacian operator on $\Omega$ or the POD basis learned from the training data.

\subsection{Graph Neural Operator}
\label{ssec:GNO}
In the graph neural network framework \cite{scarselli2008graph,kipf2016semi,pfaff2020learning,liu2024laflownet}, information is propagated locally through the connectivity of the point cloud to learn the mapping in \eqref{eq:G-continuous}. This results in a discretization-dependent approach that requires careful tuning of the message-passing depth.
This mechanism is generalized to the graph neural operator \cite{li2020neural,li2020multipole}, by replacing the averaging aggregation as the integral operator with a local kernel, where  $\kappa(x,y) = 0$ when $\lVert x - y \rVert > r$. The integral can be evaluated as
\begin{align}
\label{eq:kernel-local}
\int_{\Omega} \kappa(x, y)  W^{v} f_{\rm in}(y) dy  = 
\int_{ \Omega ~ \bigcap ~ \lVert x - y \rVert \leq r }  \kappa(x, y) W^{v} f_{\rm in}(y) dy,
\end{align}
with $\bigO(m)$ integration points sampled from the region $\Omega ~ \bigcap ~ \lVert x - y \rVert \leq r$. The complexity of evaluating the integral  \eqref{eq:kernel-local} becomes linear with respect to the number of mesh points.  However, its performance might be sensitive to the hyperparameters $r$ and $m$, which may vary depending on the mesh resolution.

\subsection{Transformer}
\label{ssec:transformer}
 In the Transformer framework’s attention mechanism~\cite{vaswani2017attention,calvello2024continuum}, the function values $f_{\rm in}(x) \in \R^{d_{\rm in}}$ are interpreted as word embeddings.
 Given a point cloud $\{x^{(i)}\}_{i=1}^{N}$ representing the domain $\Omega$, the attention mechanism with a softmax operator computes the output at each point $x^{(i)}$ as
\begin{align}
    f_{\rm out}(x^{(i)}) = \sum_{j=1}^{N} 
    \frac{e^{\langle W^q f_{\rm in}(x^{(i)}) , W^k f_{\rm in}(x^{(j)})\rangle / \sqrt{d_{\rm out}}}}{\sum_{j^{'}=1}^{N} e^{\langle W^q f_{\rm in}(x^{(i)}), W^k f_{\rm in}(x^{(j^{'})}) \rangle/\sqrt{d_{\rm out}}}}   W^v f_{\rm in}(x^{(j)}),
\end{align}
where $W^q, W^k, W^v \in \R^{d_{\rm out} \times d_{\rm in}}$ are the query, key, and value matrices, and $\langle\cdot, \cdot\rangle$ denotes the inner product in $\R^{d_{\rm out}}$.
 In the continuous limit, as the number of points approaches infinity and assuming uniform sampling from the domain $\Omega$, the attention mechanism leads to 
 \begin{align}
    f_{\rm out}(x) =\int_{\Omega} 
    \frac{e^{\langle W^q f_{\rm in}(x) , W^k f_{\rm in}(y)\rangle / \sqrt{d_{\rm out}}}}{\int_{\Omega} e^{\langle W^q f_{\rm in}(x), W^k f_{\rm in}(z) \rangle/\sqrt{d_{\rm out}}} dz}   W^v f_{\rm in}(y) dy,
\end{align}
 which gives rise to the following nonlinear parametric kernel:
 \begin{align}
 \label{eq:transformer}
    \kappa(x, y, f_{\rm in}) = \frac{ e^{\langle W^q f_{\rm in}(x) , W^k f_{\rm in}(y) \rangle/\sqrt{d_{\rm out}}} }{\int_{\Omega} e^{\langle W^q f_{\rm in}(x) ,  W^k f_{\rm in}(z) \rangle/\sqrt{d_{\rm out}}} dz}.
\end{align}
Positional information can also be used in the kernel to develop neural operators~\cite{junfengpositional}.
However, evaluating the integral operator  \eqref{eq:integral-operator} with this kernel \eqref{eq:transformer} typically involves quadratic complexity with respect to the number of mesh points.  Removing the softmax operation \cite{cao2021choose,guo2022transformer} simplifies the kernel to 
\begin{align}
    \kappa(x, y, f_{\rm in}) = \langle W^q f_{\rm in}(x) , W^k f_{\rm in}(y) \rangle,
\end{align}
which enables a low-rank representation and reduces the computational cost to linear complexity.

\section{Neural Operator on Point Cloud}
\label{sec:pcno}
In this section, we introduce a novel neural operator specifically designed to learn the mapping described in \eqref{eq:G-continuous}, with a particular focus on handling complex and variable geometries, $\Omega \subset \R^{d}$. Notably, $\Omega$ can represent a low-dimensional manifold within  $\R^d$, such as the surface of a vehicle in $\R^3$ (See \cref{ssec:vehicle}).
To account for both global and local effects, and inspired by the work of \cite{liu2024neural}, our approach separates integral and differential operators, and leverages traditional numerical methods for their discretizations.
This design ensures that the neural operator, initially defined in function space, can robustly handle complex and variable computational domains with arbitrary discretizations, and that the subsequent discretization results in small mesh-size-dependent errors.
Since the geometry $\Omega$ is typically represented as a point cloud with connectivity information, $X = \{x^{(i)}\}_{i=1}^{N}$,  where the number of points $N$ may vary across datasets,  we refer to our approach as the Point Cloud Neural Operator (PCNO), with further details provided in the following sections.

\subsection{Integral Operator}
\label{ssec:integral-op}
 We associate the geometry $\Omega$ with a density function $\rho(x; \Omega)$ defined on $\Omega$.  The integral operator is defined as:
\begin{equation}
\begin{split}
    \label{eq:integral-operator-rho}
    &f_{\rm out}(x) =  \int_{\Omega} \kappa\bigl(x, y, f_{\rm in}\bigr)  W^{v} f_{\rm in}(y) \rho(y; \Omega) dy,
\end{split}
\end{equation}
where $\kappa$ is the kernel, and $W^{v}\in\R^{d_{\rm out}\times d_{\rm in}}$ is a parameter matrix. 
The density function $\rho(x; \Omega)$ ensures that the integral does not inappropriately scale the function by the domain volume $|\Omega|$, which enhances stability during training and facilitates numerical integration.

The density function $\rho(x; \Omega)$ can be set as a uniform density
$\rho(x; \Omega) = \frac{1}{|\Omega|}$ defined on $\Omega$. In this case, $\rho(x; \Omega)$ acts as the indicator function of $\Omega$. Given a point cloud $\{x^{(i)}\}_{i=1}^N$ representing the computational domain $\Omega$, the integral operator  \eqref{eq:integral-operator-rho} can be  numerically approximated as:
\begin{equation}
\begin{split}
    \label{eq:integral-operator-numerics-uniform}
    &f_{\rm out}(x) \approx \frac{1}{|\Omega|} \sum_{i=1}^{N} \kappa\bigl(x, x^{(i)}, f_{\rm in}\bigr)  W^{v} f_{\rm in}(x^{(i)})  \dd\Omega_i,
\end{split}
\end{equation}
where $\dd\Omega_i$ represents the estimated mesh size associated with point $x^{(i)}$, derived from the point cloud. Details on the computation of $\dd\Omega_i$ are provided in \ref{sec:mesh}.

In practice, the mesh is often adaptively generated, with mesh points more densely distributed in regions of higher importance (See \cref{ssec:airfoil_flap}). 
Accordingly, $\rho(y; \Omega)$ can be set to represent the density from which the point cloud is sampled.
This allows the integral operator in \eqref{eq:integral-operator-rho} to assign greater weight to these critical regions through the term $\rho(y; \Omega)$. 
Given the point cloud, the density can be estimated as $\rho(x^{(i)}; \Omega) = \frac{1}{N\dd\Omega_i}$, which satisfies the normalization condition  
$\int_{\Omega} \rho(x,\Omega)\dd x := \sum_{i=1}^N \rho(x^{(i)}; \Omega) \dd\Omega_i = 1$. 
Then the integral operator in \eqref{eq:integral-operator-rho} can be numerically approximated as:
\begin{equation}
\begin{split}
    \label{eq:integral-operator-numerics}
    &f_{\rm out}(x) \approx \sum_{i=1}^{N}\kappa\bigl(x, x^{(i)}, f_{\rm in}\bigr)  W^{v} f_{\rm in}(x^{(i)}) \rho(x^{(i)}; \Omega) \dd\Omega_i = \frac{1}{N} \sum_{i=1}^{N}\kappa\bigl(x, x^{(i)}, f_{\rm in}\bigr)  W^{v} f_{\rm in}(x^{(i)}). 
\end{split}
\end{equation}
This expression is equivalent to the Monte Carlo method for numerical integration.

When $\rho(x; \Omega)$ is defined as the point cloud density, it automatically incorporates mesh adaptivity information (if adaptivity is present), thereby improving the accuracy of the results (See \cref{ssec:airfoil_flap}). 
However, this approach requires the adaptive strategy to be consistent across all cases, as each $\Omega$ is associated with a unique $\rho(x;\Omega)$. If the adaptive strategy is not universal, the uniform density should be used instead, as this approach is free from such limitations (See \cref{ssec:adv}).

\subsection{Differential Operator}
\label{ssec:differential-op}
We further define a differential operator to capture local effects. 
The gradient $\nabla f_{\rm in} : \R^d \rightarrow \R^{d \times d_{\rm in}}$ can be computed independently for each of the $d_{\rm in}$ channels. Without loss of generality, we assume $d_{\rm in} = 1$ for the following discussion. The gradient is estimated via a least-squares approach implemented as a single message-passing step.
Given the connectivity of the point cloud (See \ref{sec:mesh} for details on computing connectivity), let the neighbors of $x$ be denoted as $x^{(1)}, x^{(2)},\cdots x^{(m)}$. The gradient $\nabla f_{\rm in}(x)$ is estimated by solving the least-squares problem:
\begin{equation}
\begin{split}
&A(x)  \nabla f_{\rm in}(x) = b_f(x) \\
& \textrm{ where } A(x)  = \begin{pmatrix}
x^{(1)} - x\\
x^{(2)} - x\\
\vdots\\
x^{(m)} - x\\
\end{pmatrix} \in \R^{m \times d} \quad \textrm{and}\quad 
b_f(x) =  
 \begin{pmatrix}
f_{\rm in}(x^{(1)}) - f_{\rm in}(x)\\
f_{\rm in}(x^{(2)}) - f_{\rm in}(x)\\
\vdots\\
f_{\rm in}(x^{(m)}) - f_{\rm in}(x)\\
\end{pmatrix} \in \R^{m}.
\end{split}
\end{equation}
The least-squares problem can be solved by computing the pseudoinverse of the matrix $A(x)^{\dagger} \in \R^{d \times m} $. Special consideration is required when $\Omega$ is a submanifold of $\R^d$. Let $d^{'} \leq d$ denote the intrinsic dimension of $\Omega$. For example,  lines have dimension $d^{'} = 1$, and surfaces have dimension $d^{'} = 2$. The truncated singular value decomposition of $A(x)$ with rank $d^{'}$ is given by
\begin{equation}
    A(x) = U\Sigma V^T, \textrm{ where } U\in \R^{m \times d^{'}},\quad  \Sigma \in \R^{d^{'} \times d^{'}}, \quad V^T\in \R^{d^{'} \times d}.
\end{equation}
The pseudoinverse is then computed as $A(x)^{\dagger} = V\Sigma^{-1} U^T$. 
When $\Omega$ is a submanifold of $\R^d$, the estimated gradient $\nabla f_{\rm in}(x) = A(x)^{\dagger}b_f(x)$ satisfies 
\begin{equation}
    n(x)^T \cdot \nabla f_{\rm in}(x)  = 0
\end{equation} 
for any normal vector $n(x)$ of $\Omega$ at $x$, since $A(x) \cdot n(x) = 0$. Consequently, the gradient $\nabla f_{\rm in}(x)$ lies within the tangent space of $\Omega$ at $x$.
For efficient implementation, we use the fact that  $A(x)^{\dagger}$ depends solely on the point cloud and its connectivity, allowing it to be preprocessed before neural operator training.
Additionally, the gradient $\nabla f_{\rm in}(x) = A(x)^{\dagger}b_f(x)$ can be efficiently assembled using a message-passing procedure. 
Specifically, the $i$-th column of $A(x)^{\dagger}$, denoted as $A_i(x)^{\dagger} \in \R^{d}$, can be stored at the edge connecting $x$ and $x^{(i)}$. For any function $f_{\rm in }$, its gradient at $x$ is computed by aggregating information from its neighbors, as follows
\begin{equation}
\label{eq:gradient_operator}
    \nabla f_{\rm in}(x) = \sum_{i: x^{(i)} \textrm{ is a neighbor of } x} A_i(x)^{\dagger} \bigl(f_{\rm in}(x^{(i)}) - f_{\rm in}(x)\bigr).
\end{equation}

While the gradient is an intrinsic property of $f_{\rm in}$, and is independent of connectivity or mesh, discontinuities in  $f_{\rm in}$ result in infinite gradients. In such cases,  the gradient estimation scales as $\mathcal{O}(1/\Delta x)$ where $\Delta x$ represents the local mesh size around $x$. To address this issue, we apply the \texttt{SoftSign} activation function to each gradient component: 
\begin{equation}
\label{eq:smoothed-gradient}
    \widetilde{\nabla} f_{\rm in}(x) = \texttt{SoftSign}(\nabla f_{\rm in}(x) ) \qquad \textrm{ where }\qquad \texttt{SoftSign}(x) = \frac{x}{1 + |x|}.
\end{equation}
The smoothed gradient $\widetilde{\nabla}$ preserves gradient information while serving as an indicator for sharp or discontinuous local features in $f_{\rm in}$.

\subsection{Point Cloud Neural Operator}
Finally, by combining the aforementioned integral operator and differential operator, we introduce the following point cloud neural layer, which maps the input function $f_{\rm in}: \Omega \rightarrow \R^{d_{\rm in}}$ to the output function $f_{\rm out}: \Omega \rightarrow \R^{d_{\rm out}}$:
\begin{equation}
\begin{split}
\label{eq:pcno-layer}
    &\cL : f_{\rm in} \mapsto f_{\rm out}, \\
    &f_{\rm out}(x) = 
    \sigma \Bigl( W^{l} f_{\rm in}(x) + b + \sum_{k}\int_{\Omega} e^{2\pi i \frac{k}{L} \cdot(x - y) }  W_{k}^{v} f_{\rm in}(y) \rho(y; \Omega)dy  +  W^{g} \widetilde{\nabla} f_{\rm in}(x) \Bigr),
\end{split}
\end{equation}
where the components of the layer are defined as follows:
\begin{itemize}
\item The first term represents a pointwise local linear function parameterized by $W^{l}\in\R^{d_{\rm out}\times d_{\rm in}}$ and $b\in\R^{d_{\rm out}}$. 
\item The second term represents the integral operator. 
In this work, we focus on the Fourier kernel discussed in \cref{ssec:FNO}: $\kappa\bigl(x, y, f_{\rm in}\bigr) = e^{2\pi i \frac{k}{L} \cdot x} e^{- 2\pi i \frac{k}{L} \cdot y}$. Here $k \in \Z^{d}$ denotes the frequency, $L \in \R_{+}^d$ denotes the learnable length scales, and each frequency $k$ is associated with a complex-valued parameter matrix $W_k^{v}\in \C^{d_{\rm out}\times d_{\rm in}}$.  
\item The third term represents the differential operator, parameterized by $W^{g}\in \R^{d_{\rm out}\times (d \cdot d_{\rm in})}$. Here, the smoothed gradient $\widetilde{\nabla} f_{\rm in}(x) \in \R^{d \times d_{\rm in}}$ in \eqref{eq:smoothed-gradient} is flattened into a vector in $\R^{d \cdot d_{\rm in}}$ for computation.
\end{itemize}
The nonlinear activation function $\sigma$ used in this work is \texttt{GeLU} function~\cite{hendrycks2016gaussian}.

As shown in \cref{fig:architecture}, the Point Cloud Neural Operator $\G_\theta$ considered in this work consists of several point cloud neural layers \eqref{eq:pcno-layer}. 
The input function is defined as 
\begin{equation}
\label{eq:tildea}
    \tilde{a} : \Omega \rightarrow \R^{d_a + d + 1}, ~~\textrm{ where } \tilde{a}(x) = \begin{bmatrix}
    a(x)\\
    x \\
    \rho(x; \Omega)
\end{bmatrix},
\end{equation} 
and consists of three components. The first component is the parameter function $a: \R^d \rightarrow \R^{d_a}$, which includes information such as source terms and boundary conditions. The latter two components are the identity function $\mathrm{id}: \R^d \rightarrow \R^d$ and the density function $\rho(x;\Omega)$, both of which encode geometric information about the computational domain $\Omega$ and the point cloud.
At the implementation level, the input  $\boldsymbol{\tilde{a}} \in \R^{N \times (d_a + d + 1)}$ consists of evaluations of the input functions $\tilde{a}$ at the point cloud  $X = \{x^{(i)}\}_{i=1}^{N}$.

\begin{figure}
    \centering
    \includegraphics[width=0.95\linewidth]{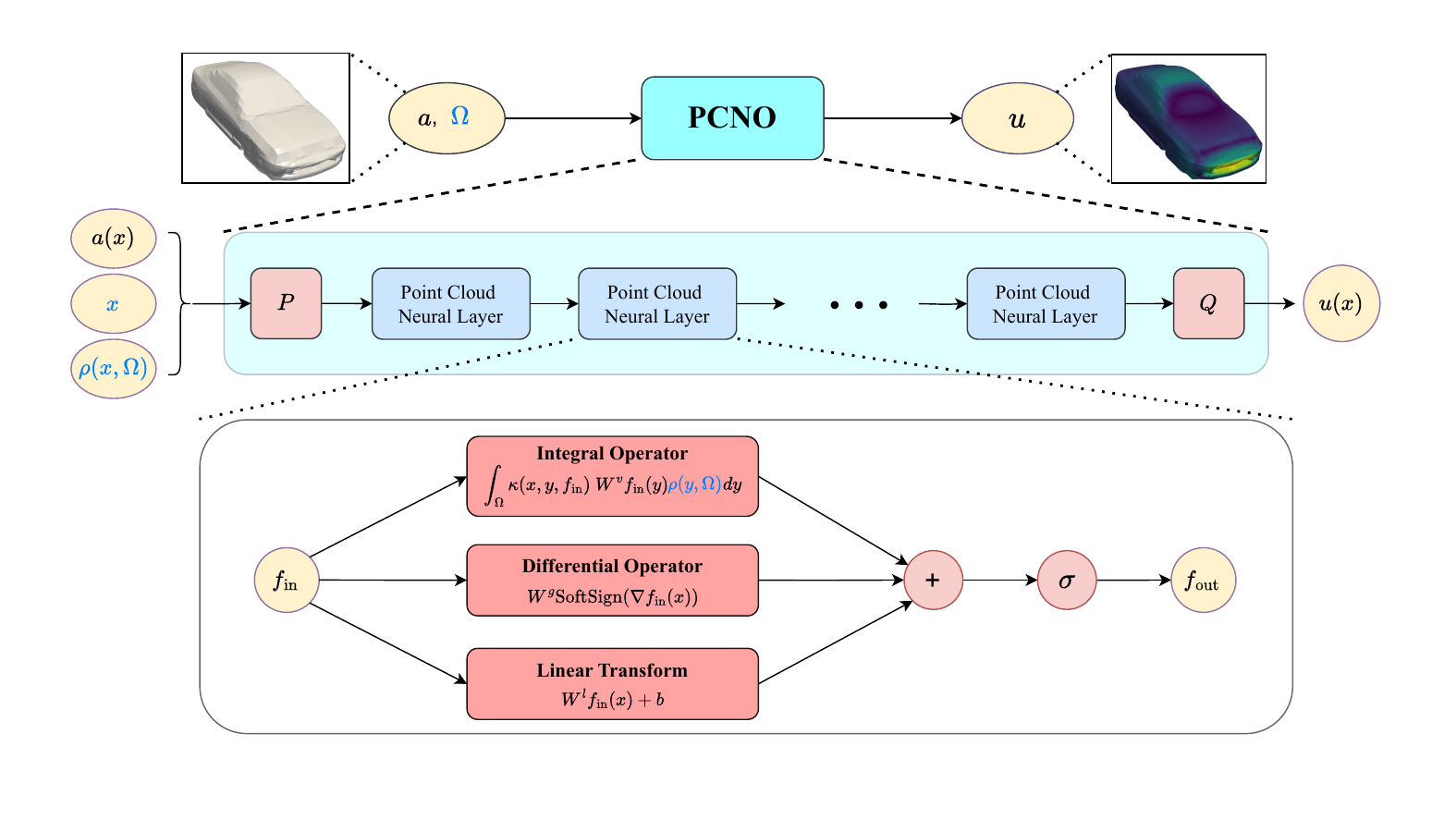}
    \caption{
    Architecture of the Point Cloud Neural Operator (PCNO): Starting with input function $[a(x),x,\rho(x,\Omega)]$, it is first lifted to a higher-dimensional channel space using a pointwise neural network  $P$. Several point cloud neural layers are then applied, followed by a projection back to the target dimensional space using a pointwise neural network  $Q$. The final output is $u$.}
    \label{fig:architecture}
\end{figure}

The first layer is a lifting layer, which only contains a pointwise neural network:
\begin{equation}
\begin{split}
\label{eq:pcno-layer-lifting}
f_{\rm out}(x) = 
    \sigma \Bigl( W^{l} \tilde{a}(x) + b\Bigr).
\end{split}
\end{equation}
This is crucial for lifting the channel space from  $d_{a} + d + 1$ to a higher-dimensional channel space $d_{\rm out}$, enabling the capture of more complex features and making the underlying relationships more tractable~\cite{koopman1931hamiltonian,mezic2005spectral,qian2020lift}.
In the final projection layer, the output function is projected back to the desired number of channels, 
$d_u$. In the present work, the projection layer consists of two consecutive pointwise neural network layers, given by:
\begin{equation}
\begin{split}
\label{eq:pcno-layer-projection}
    &u(x) = 
    W^{'l} \sigma \Bigl( W^{l} f_{\rm in}(x) + b\Bigr) + b^{'},
\end{split}
\end{equation}
where $W^{'l} \in \R^{\cdot \times d_u}$ and $b^{'} \in \R^{d_u}$.

\section{Theoretical Analysis}
\label{sec:theory}
In this section, we provide a theoretical analysis of the proposed Point Cloud Neural Operator, focusing on its invariance properties, computational complexity, and approximation capabilities. These aspects are studied through the following theorems.

Permutation invariance with respect to input points is a fundamental requirement for neural operators or networks that process point clouds, as emphasized in \cite{qi2017pointnet}. The proposed Point Cloud Neural Operator inherently satisfies this property.
\begin{theorem}[Permutation invariance]
Assume the point cloud consists of $N$ points, the Point Cloud Neural Operator $\G_{\theta}$ maps
\begin{equation}
    \Bigl[u(x^{(1)}), u(x^{(2)}), \cdots, u(x^{(N)})\Bigr] = \G_{\theta}\Bigl(\tilde{a}(x^{(1)}), \tilde{a}(x^{(2)}), \cdots, \tilde{a}(x^{(N)})\Bigr).
\end{equation}
Given any permutation $\tau: (1, 2, \cdots, N) \rightarrow  (\tau_1, \tau_2, \cdots, \tau_N)$, the Point Cloud Neural Operator $\G_{\theta}$ satisfies
\begin{equation}
    \Bigl[u(x^{(\tau_1)}), u(x^{(\tau_2)}), \cdots, u(x^{(\tau_N)})\Bigr] = \G_{\theta}\Bigl(\tilde{a}(x^{(\tau_1)}), \tilde{a}(x^{(\tau_2)}), \cdots, \tilde{a}(x^{(\tau_N)})\Bigr).
\end{equation}
\end{theorem}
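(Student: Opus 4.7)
The plan is to verify that every building block of the PCNO commutes with the action of the permutation $\tau$ on per-point features (i.e., is permutation equivariant), and then invoke the fact that the composition of equivariant maps is equivariant. Writing the action of $\tau$ on a stack of per-point values as $[f(x^{(1)}),\dots,f(x^{(N)})]\mapsto[f(x^{(\tau_1)}),\dots,f(x^{(\tau_N)})]$, the goal is to check this property layer by layer for the architecture described in Figure~\ref{fig:architecture}.

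First, I would observe that the input encoding $\tilde a(x)$ in \eqref{eq:tildea} is defined pointwise in terms of $a(x)$, the identity map, and the density $\rho(x;\Omega)$, so permuting the point cloud merely permutes the rows of the input tensor $\boldsymbol{\tilde a}$. The same is true of the lifting layer \eqref{eq:pcno-layer-lifting} and the projection layer \eqref{eq:pcno-layer-projection}, since both are strictly pointwise in $x$. Next I would treat a generic PCNO layer \eqref{eq:pcno-layer}. The pointwise affine piece $W^{l}f_{\rm in}(x)+b$ and the outer \texttt{GeLU} activation depend only on the value of $f_{\rm in}$ at the evaluation point, so they commute with $\tau$. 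For the integral piece, once it is discretized as in \eqref{eq:integral-operator-numerics-uniform} or \eqref{eq:integral-operator-numerics}, it becomes a finite sum of the form $\sum_i \kappa(x,x^{(i)},f_{\rm in})\,W_k^{v}f_{\rm in}(x^{(i)})\,\rho(x^{(i)};\Omega)\,\dd\Omega_i$; since finite summation is commutative, this depends on the multiset $\{x^{(i)}\}$ and not on its ordering, so the value at output point $x^{(j)}$ is determined by $x^{(j)}$ alone together with this symmetric aggregate. For the differential piece, the smoothed gradient \eqref{eq:smoothed-gradient} is built from \eqref{eq:gradient_operator}, which at $x$ is an aggregation over the geometric neighbors of $x$ weighted by the local least-squares coefficients $A_i(x)^{\dagger}$; reindexing those neighbors does not change the sum.

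The main subtle point, and the step I would dwell on, is that the \emph{auxiliary data} used in the discretization---the mesh size estimates $\dd\Omega_i$, the density $\rho(x^{(i)};\Omega)$, the connectivity defining neighborhoods, and consequently the per-edge coefficients $A_i(x)^{\dagger}$---must themselves be functions of the point cloud viewed as an unordered set, and must transform equivariantly under $\tau$. I would appeal here to the construction in \ref{sec:mesh}, which defines these quantities purely from geometric relationships among the points, and therefore they are invariant under relabeling.

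Once equivariance is established for each constituent map, I would conclude by a straightforward induction on the depth of the network: writing $\G_\theta=Q\circ\cL_M\circ\cdots\circ\cL_1\circ P$, applying $\tau$ to the input $[\tilde a(x^{(1)}),\dots,\tilde a(x^{(N)})]$ commutes with every factor and therefore with the entire composition, yielding the stated identity. I do not anticipate a genuine obstacle beyond making the geometric-data assumption explicit; the argument is essentially a structural check that the architecture was designed with the unordered nature of point clouds in mind.
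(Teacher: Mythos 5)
Your proof is correct and follows essentially the same route as the paper's: decompose the PCNO layer into the pointwise affine map, the discretized integral, the discretized differential operator, and the activation, check each commutes with the permutation action, and note that the auxiliary geometric data ($\dd\Omega_i$, $\rho(x^{(i)};\Omega)$, connectivity, and $A_i(x)^\dagger$) are determined by the unordered point cloud and therefore permute consistently. The paper makes exactly this observation about connectivity and the precomputed quantities; your version is slightly more explicit about the lifting/projection layers and the induction over depth, but the substance is identical.
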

\begin{proof}
 We only need to prove that the point cloud neural layer in \eqref{eq:pcno-layer} is permutation invariant. The point cloud neural layer consists of the local linear function, integral and differential operators at the discrete level (as given in \eqref{eq:integral-operator-numerics-uniform}, \eqref{eq:integral-operator-numerics}, and \eqref{eq:gradient_operator}), and the activation functions, given by the following expressions: 
 \begin{subequations}
\label{eq:permute_functions}
\begin{align}
f_{\rm out}(x^{(j)}) &:= W^{l} f_{\rm in}(x^{(j)}) + b, \\
f_{\rm out}(x^{(j)}) &:= \sum_{i=1}^{N}\kappa\bigl(x^{(j)}, x^{(i)}, f_{\rm in}\bigr)  W^{v} f_{\rm in}(x^{(i)}) \rho(x^{(i)}; \Omega) \dd\Omega_i , \\
f_{\rm out}(x^{(j)}) &:= \sum_{i: x^{(i)} \textrm{ is a neighbor of } x^{(j)}} A_i(x^{(j)})^{\dagger} \bigl(f_{\rm in}(x^{(i)}) - f_{\rm in}(x^{(j)})\bigr), \\
f_{\rm out}(x^{(j)}) &:= \sigma\bigl( f_{\rm in}(x^{(j)})  \bigr) .
\end{align}
\end{subequations}
For any permutation of the points $i \rightarrow \tau_i~(1 \leq i \leq N)$, their connectivity remains unchanged. Specifically, when $x^{(i)}$ is a neighbor of $x^{(j)}$, then $x^{(\tau_i)}$ is a neighbor of $x^{(\tau_j)}$.
Additionally, the edge weights 
$A_i(x^{(j)})^{\dagger}$, density 
$\rho(x^{(i)}; \Omega)$, and mesh size 
$ \dd\Omega_i$ will be permuted accordingly. Thus, the functions in \eqref{eq:permute_functions} remain unchanged under such a permutation, which implies that the point cloud neural layer and its composition are permutation invariant.
\end{proof}

Next, we analyze the computational complexity of the proposed Point Cloud Neural Operator. The following theorem demonstrates that the proposed operator achieves linear complexity during inference, ensuring scalability to large-scale problems.
\begin{theorem}[Linear complexity]
Assume the point cloud consists of $N$ points in $\Omega \subset \R^d$, with $E = \mathcal{O}(N)$ edges in the connectivity. The inference cost of the point cloud neural layer \eqref{eq:pcno-layer} from $f_{\rm in}: \Omega \rightarrow \R^{d_{\rm in}}$ to $f_{\rm out}: \Omega \rightarrow \R^{d_{\rm out}}$ on the point cloud, with a truncated Fourier modes number $k_{\max}$ is $\mathcal{O}\Bigl( N d_{\rm in} d_{\rm out}(k_{\max}+d) + E d_{\rm in}d\Bigr)$.
\end{theorem}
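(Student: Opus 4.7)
The plan is to decompose the point cloud neural layer \eqref{eq:pcno-layer} into its four constituent pieces---the pointwise local linear map, the Fourier integral operator, the differential operator applied via message-passing, and the pointwise activation---and bound the arithmetic cost of each separately. Summing the contributions will yield the claimed bound $\mathcal{O}(N d_{\rm in} d_{\rm out}(k_{\max}+d) + E d_{\rm in} d)$.

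First I would handle the pointwise linear term $W^{l} f_{\rm in}(x) + b$: applied at each of the $N$ points with $W^{l}\in\R^{d_{\rm out}\times d_{\rm in}}$, this costs $\mathcal{O}(N d_{\rm in} d_{\rm out})$. The activation $\sigma$ contributes $\mathcal{O}(N d_{\rm out})$ and is absorbed. Next, for the Fourier integral term, I would crucially invoke the separable low-rank form exposed in \eqref{eq:kernel-sep}: for each of the $k_{\max}$ modes, one first computes the integral $\int_\Omega e^{-2\pi i (k/L)\cdot y} W_k^v f_{\rm in}(y)\rho(y;\Omega)\,dy$ numerically using the point cloud quadrature of \eqref{eq:integral-operator-numerics-uniform} or \eqref{eq:integral-operator-numerics}; this requires applying $W_k^v\in\C^{d_{\rm out}\times d_{\rm in}}$ and the scalar phase at every point, followed by a single reduction, for a total of $\mathcal{O}(N d_{\rm in} d_{\rm out})$ per mode. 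Broadcasting the resulting $\R^{d_{\rm out}}$ vector back to every $x$ with the complementary phase $e^{2\pi i (k/L)\cdot x}$ adds $\mathcal{O}(N d_{\rm out})$ per mode. Summing over modes gives $\mathcal{O}(N d_{\rm in} d_{\rm out} k_{\max})$.

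For the differential term, I would use the pre-assembled message-passing assembly of $\widetilde{\nabla}f_{\rm in}$ from \eqref{eq:gradient_operator}. Each edge carries a stored $A_i(x)^\dagger\in\R^d$, and contributes $\mathcal{O}(d\cdot d_{\rm in})$ work across the $d_{\rm in}$ channels for the difference-and-multiply step; summing over $E$ edges yields $\mathcal{O}(E d_{\rm in} d)$. The \texttt{SoftSign} applied componentwise adds only $\mathcal{O}(N d_{\rm in} d)$. The final contraction $W^g\widetilde{\nabla} f_{\rm in}(x)$ with $W^g\in\R^{d_{\rm out}\times(d\cdot d_{\rm in})}$ at each point costs $\mathcal{O}(N d_{\rm in} d\, d_{\rm out})$, which is the dominant term contributing the $N d_{\rm in} d_{\rm out} d$ piece of the bound.

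Putting the bounds together and noting $E = \mathcal{O}(N)$, the total is $\mathcal{O}\bigl(N d_{\rm in} d_{\rm out}(k_{\max}+d) + E d_{\rm in} d\bigr)$, which is linear in $N$ for fixed channel widths and truncation level. The main subtlety---and what I view as the only step requiring care---is justifying the linear-in-$N$ cost of the Fourier term, since a naive reading of the kernel $e^{2\pi i (k/L)\cdot(x-y)}$ suggests $\mathcal{O}(N^2)$ quadrature on an unstructured point cloud; the argument hinges on the factorization $\phi_k(x)\overline{\phi_k(y)}$ in \eqref{eq:kernel-sep}, which permits the integral to be computed once and then broadcast. Everything else is a direct counting argument on the discrete formulas in \eqref{eq:integral-operator-numerics-uniform}, \eqref{eq:integral-operator-numerics}, and \eqref{eq:gradient_operator}.
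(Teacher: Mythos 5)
Your proposal is correct and follows essentially the same decomposition as the paper's proof: bound the pointwise linear map, the Fourier integral via its separable low-rank form, the differential operator via message-passing plus the contraction with $W^g$, and the activation, then sum. The only cosmetic difference is that you spell out the per-mode factor-and-broadcast argument for the Fourier term and the \texttt{SoftSign} cost explicitly, which the paper leaves implicit by citing the discrete formulas; the resulting term-by-term bounds and the final total are identical.
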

\begin{proof}
For the point cloud neural layer described in  \eqref{eq:pcno-layer}, the inference complexity of the local linear function is $\mathcal{O}\bigl(   N d_{\rm in} d_{\rm out} \bigr)$, the inference complexity of the integral at the discrete level, as given in~\eqref{eq:integral-operator-numerics-uniform} and \eqref{eq:integral-operator-numerics}, is $\mathcal{O}\bigl(N k_{\rm max} d_{\rm out} d_{\rm in}\bigr)$, 
the complexity of the differential operator at the discrete level with precomputed edge weights $A_i(x^{(j)})^{\dagger}$ is $\mathcal{O}\bigl(E d d_{\rm in} + N d_{\rm out} dd_{\rm in}\bigr)$, where the first term is from the message-passing step in~\eqref{eq:gradient_operator}. The inference complexity of the activation function is $\mathcal{O}\bigl(N d_{\rm out}\bigr)$. In total, the inference complexity is $\mathcal{O}\Bigl( N d_{\rm in} d_{\rm out}(k_{\max}+d) + E d_{\rm in}d\Bigr)$.
\end{proof}

Finally, we establish the universal approximation property, a fundamental criterion for neural network design. Our proposed Point Cloud Neural Operator satisfies this property, as a direct consequence of \cite[Theorem 2.2]{lanthaler2023nonlocal}.

\begin{theorem}[Universal approximation at continuous level]
\label{prop:uap}
Assume that $\Omega$, drawn from the data distribution $\mu$ is uniformly bounded, full-dimensional, and has a Lipschitz boundary. And both $a$ and $u$ are $L_p(\Omega)$ functions for a given $p \geq 1$.  And $\inf_{\Omega,\, x\in\Omega} \rho(x; \Omega) = \rho_{\inf} > 0$. We embed $\Omega$ into a bounded hypercube $B$ and extend the input function $\tilde{a}$ defined in \eqref{eq:tildea}, and the solution function $u$ as follows
\begin{equation}
    \tilde{a}_B(x) = 
    \begin{cases}
    \tilde{a}(x) & x \in \Omega \\
    0  & x \in B \backslash \Omega
    \end{cases}
    \quad \textrm{ and } \quad
    u_B(x) = 
    \begin{cases}
    u(x) & x \in \Omega \\
    0  & x \in B \backslash \Omega
    \end{cases},
\end{equation}
where $\tilde{a}_B(x) = [a_B(x),  x \mathds{1}_{\Omega}(x), \rho_B(x)] \in \R^{d_a + d + 1}$, the support of $\rho_B$  gives $\Omega$, and the restriction of $a_B$  on $\Omega$ recovers $a$. Consequently, both $\Omega$ and $\tilde{a}$ can be reconstructed from $\tilde{a}_B$.
We denote the extended operator as $\G^{\dagger}_B: L_p(B, \R^{d_a+d+1}) \rightarrow L_{p}(B, \R^{d_u})$, 
which satisfies $$\G^{\dagger}_B(\tilde{a}_B) = \G^{\dagger}(a,\Omega).$$
  Assume further that  $\G^{\dagger}_B$ is a continuous operator defined on the compact set $\cK_B \subset L_p(B, \R^{d_a+d+1})$, consisting of bounded functions satisfying $\sup_{\tilde{a}_B \in \cK_B}\lVert \tilde{a}_B \rVert_{L_\infty} < \infty$. 
Then, for any $\epsilon > 0$, there exists a proposed Point Cloud Neural Operator $\G_{\theta}$, such that 
\begin{equation}
\label{eq:prop:uap}
    \sup_{\tilde{a}_B \in \cK_B} \lVert \G^{\dagger}(a,\Omega) - \G_{\theta}(\tilde{a})\rVert_{L_{p}(\Omega, \R^{d_u})} \leq \epsilon.
\end{equation}
\end{theorem}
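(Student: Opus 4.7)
My plan is to reduce the statement to a universal approximation theorem for averaging neural operators on the fixed hypercube $B$, for which I would invoke \cite[Theorem 2.2]{lanthaler2023nonlocal} directly, and then to show that every layer produced by that theorem can be emulated by a bounded number of Point Cloud Neural Layers \eqref{eq:pcno-layer} operating on $\Omega$. Since the extended operator $\G^{\dagger}_B$ is continuous on the compact set $\cK_B \subset L_p(B,\R^{d_a+d+1})$ of $L_\infty$-bounded functions, that theorem furnishes an averaging neural operator $\cN_\theta$ built from pointwise MLPs and truncated Fourier integral operators on $B$ satisfying
\begin{equation*}
\sup_{\tilde{a}_B \in \cK_B} \|\G^{\dagger}_B(\tilde{a}_B) - \cN_\theta(\tilde{a}_B)\|_{L_p(B,\R^{d_u})} \leq \epsilon/2.
\end{equation*}
Restricting this inequality to $\Omega$ and using $\G^{\dagger}_B(\tilde{a}_B)|_\Omega = \G^{\dagger}(a,\Omega)$ yields a half-error bound on $\Omega$.

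It then suffices to build a PCNO $\G_\theta$ whose output approximates $\cN_\theta(\tilde{a}_B)|_\Omega$ within $\epsilon/2$. The two mismatches between a Lanthaler Fourier layer on $B$ and the PCNO layer \eqref{eq:pcno-layer} on $\Omega$ are the domain of integration and the extra weight $\rho(\cdot;\Omega)$. The first mismatch is resolved by inductively maintaining the property that every intermediate feature of $\cN_\theta$ is supported in $\Omega$; this holds at the input since $\tilde{a}_B$ vanishes on $B\setminus\Omega$, and it can be preserved layer-by-layer by gating the output with the indicator of $\Omega$, itself readily computed from the carried $\rho$ channel via a pointwise MLP that maps $\rho\mapsto 0$ near $0$ and $\rho\mapsto 1$ on $[\rho_{\inf},\rho_{\max}]$. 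The second mismatch is handled by the identity
\begin{equation*}
\int_B e^{2\pi i \frac{k}{L}\cdot(x-y)}\, g(y)\, dy \;=\; \int_\Omega e^{2\pi i \frac{k}{L}\cdot(x-y)}\,\frac{g(y)}{\rho(y;\Omega)}\,\rho(y;\Omega)\, dy,
\end{equation*}
valid when $g$ is supported in $\Omega$: since $\rho \geq \rho_{\inf} > 0$ on $\Omega$, the map $(g,\rho)\mapsto g/\rho$ is continuous on a compact set and can be uniformly approximated by a pointwise \texttt{GeLU} MLP inserted before each Fourier block. The differential-operator contributions are switched off by setting $W^g = 0$ throughout, so each PCNO layer reduces to a Lanthaler-type layer with a pre-composed reweighting.

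The main obstacle I anticipate is the stability bookkeeping: the per-layer pointwise approximations of $g/\rho$ and of the indicator of $\Omega$ introduce errors that must be propagated through all subsequent layers of $\cN_\theta$ without blowing up. Since $\cN_\theta$ is a finite composition of operators that are Lipschitz continuous on bounded subsets of $L_p$, and since all intermediate features remain in a bounded set by compactness of $\cK_B$, these local approximation tolerances can be chosen small enough for the accumulated error to stay below $\epsilon/2$. Combining with the Lanthaler bound via the triangle inequality then yields \eqref{eq:prop:uap}. The assumption $\rho_{\inf} > 0$ enters essentially at this stage: it is what makes both the division by $\rho$ and the reconstruction of the indicator of $\Omega$ Lipschitz-stable inside every PCNO layer.
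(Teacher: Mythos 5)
There is a genuine gap in the layer-by-layer emulation. You invoke \cite[Theorem~2.2]{lanthaler2023nonlocal} to obtain a multi-layer network $\cN_\theta$ on $B$, then propose to reproduce $\cN_\theta|_\Omega$ with a PCNO by inserting a multiplication by $\mathds{1}_\Omega$ after every layer. This gating, however, does \emph{not} leave the features on $\Omega$ unchanged. Write $f_\ell$ for the $\ell$-th hidden feature of $\cN_\theta$ and $f'_\ell$ for the gated version. At $\ell=0$ the two agree, since $\tilde{a}_B$ is supported on $\Omega$. At $\ell=1$ one still has $f_1|_\Omega = f'_1|_\Omega$, because the integrand $W^v\tilde{a}_B$ in the first integral layer vanishes outside $\Omega$. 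But $f_1$ itself is generally nonzero on $B\setminus\Omega$: even with $\texttt{GeLU}(0)=0$, the preactivation there is $b+\int_B\kappa\,W^v\tilde{a}_B\,dy$, which is nonzero. Consequently, from the second integral layer on, $\cN_\theta$'s integral $\int_B\kappa(x,y)\,W^v f_1(y)\,dy$ picks up a contribution from $B\setminus\Omega$ that is absent in the gated/PCNO integral $\int_\Omega\kappa(x,y)\,W^v f'_1(y)\,dy$, so $f_2|_\Omega\neq f'_2|_\Omega$ and the induction collapses. The PCNO you construct therefore emulates a \emph{different} operator $\cN'_\theta$, and you would still have to prove that $\cN'_\theta|_\Omega$ approximates $\G^{\dagger}_B|_\Omega$ --- something Theorem~2.2 does not deliver. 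Your ``stability bookkeeping'' paragraph addresses the $g/\rho$ approximation errors, not this structural discrepancy, which is of order one, not controllably small.

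The paper avoids this exactly by not invoking the black-box universal approximation theorem. Instead it uses \cite[Proposition~A.8]{lanthaler2023nonlocal} to expand $\G^\dagger_B(\tilde{a}_B)\approx\sum_{j}\alpha_j(\tilde{a}_B)\eta_j$ into finitely many rank-one terms, and then (via a modified Lemma~A.9) approximates each functional $\alpha_j$ by a PCNO that contains \emph{only one} integral layer, whose integrand is built directly from $\tilde{a}_B$ and hence genuinely supported on $\Omega$. The reweighting $\int_\Omega g\,dy=\int_\Omega (g/\rho)\,\rho\,dy$, which you also identify and for which $\rho_{\inf}>0$ is used, is then applied once, at the level of that single integral, and the whole issue of contamination from $B\setminus\Omega$ never arises. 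If you want to salvage your route, you would need to go inside the proof of Theorem~2.2 to extract a one-integral-layer construction, at which point you have essentially reproduced the paper's argument.
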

We provide a proof of this  theorem in \ref{sec:proof:uap}. The  theorem highlights that a crucial aspect of the point cloud strategy (See the literature review in \cref{ssec:literature}) to construct neural operators to handle complex and variable geometries is the introduction of the bounded hypercube $B$ and the embedding of the input and output functions within this hypercube. 
However, a key feature of our Point Cloud Neural Operator is that our implementation operates exclusively on $\Omega$, without involving $B$ or the embedding process.

\section{Numerical Results}
\label{sec:numerics}
In this section, we present numerical studies of the proposed Point Cloud Neural Operator (PCNO), focusing on its performance across a diverse set of problems involving complex and variable geometries. The numerical tests span a wide range of scenarios, from pedagogical examples in one and two dimensions to real-world applications in three dimensions:
\begin{enumerate}
    \item One-dimensional advection-diffusion problem: A pedagogical example with multiscale features,  used to investigate how the point cloud density $\rho(x; \Omega)$ affects the accuracy.
    \item Two-dimensional Darcy flow problem: A pedagogical example with a varying computational domain, examining the effects of shape changes and mesh resolution.
    \item Two-dimensional flow over an airfoil: A pedagogical example involving an airfoil, which may either be a single airfoil or an airfoil with a flap, and an adaptively generated mesh. This problem explores topology changes in the computational domain and the effects of mesh adaptation.
    \item Three-dimensional vehicle application \cite{li2024geometry}: This problem involves learning the mapping from vehicle shapes to surface pressure loads. The dataset includes vehicles from ShapeNet and Ahmed body.
    \item Three-dimensional parachute dynamics: This problem studies the mapping from parachute design shapes to their inflation behavior under pressure loads.
\end{enumerate}

All experiments involve variable computational domains, which may be discretized using a varying number of points. 
To ensure consistent and parallelizable processing across all data samples, we pad the samples (including both inputs and outputs) with zeros so that all data samples reach the prescribed maximum length. A node mask vector is then used to distinguish between the original points and the padded ones. The implementation is designed such that garbage values on the padded points do not affect the original points.    
For the network architecture, PCNO employs four point cloud neural layers~in \eqref{eq:pcno-layer}, each with a channel size of $128$. The widths of both the lifting and projection layers are also set to $128$, and the $\texttt{GeLU}$ activation function is adopted. The truncated Fourier frequency $k$ is fixed across all integral operators but may vary between experiments due to memory limitations. We observe that increasing the truncated Fourier frequency generally reduces test errors.
Training is performed using the Adam optimizer~\cite{kingma2015adam} with default hyperparameters ($\beta_1 = 0.9$, $\beta_2 = 0.999$) and a weight decay factor of $10^{-4}$ over $500$ epochs.  
The base learning rate, denoted as $\gamma$, is  tuned and varies across different experiments. The length scale vector $L$ in the integral operator~\eqref{eq:pcno-layer} is treated as a hyperparameter, and its corresponding base learning rate may differ from other parameters. 
We utilize the OneCycleLR scheduler~\cite{smith2019super} to dynamically adjust the learning rate during training. Specifically, the learning rate starts at $\frac{\gamma}{2}$, increases linearly to the base learning rate $\gamma$ over the first $20\%$ of total epochs, and then decreases following a cosine annealing schedule to $\frac{\gamma}{100}$. The relative $L_2$ error is used as the cost function for both training and testing. All training and testing are conducted on a single NVIDIA A100 80G GPU.
A comprehensive comparison with other state-of-the-art neural operators is provided in \ref{sec:comparison}, focusing on widely used benchmarks, but restricted to fixed unit cube computational domains.
Our code and datasets are publicly available at: \url{https://github.com/PKU-CMEGroup/NeuralOperator}.

\subsection{Advection Diffusion Problem}
\label{ssec:adv}
In this subsection, we consider solving the steady state, one-dimensional advection diffusion boundary value problem
\begin{equation*} 
\begin{split}
\frac{\partial u}{\partial x} - D \frac{\partial^2 u}{\partial^2 x} = f(x) \quad \textrm{in} \quad \Omega, \\
u(0) = u_l, u(L) = 0,
\end{split}
\end{equation*} 
defined on a variable computational domain $\Omega = [0, L]$. Here 
$f(x)$ represents the source field, $D$ is the constant diffusivity, and $u$ denotes the steady state solution. The left boundary condition is $u_l$, while the right boundary condition is fixed at $0$, resulting in a steep boundary layer at the right end (See \cref{fig:adv_pred}).
The computational domain length $L$ is uniformly sampled from $U[10,15]$, the left boundary condition $u_l$ is uniformly sampled from $U[0, 1]$, the diffusivity is uniformly sampled from $U[5\times10^{-3}, 5\times10^{-2}]$. The source function is a Gaussian random field defined as
\begin{equation*}
    f(x) = \begin{cases}
        |g(L_f x)|  &  0 \leq x \leq L_f , \\
        0           &  x > L_f ,
\end{cases}
\end{equation*}
where the support length $L_f$ is uniformly sampled from $U[5,8]$.
The field $g(x)\sim \mathcal{N}(0, 625(-\Delta + 5^2)^{-2})$ is a Gaussian random field with zero boundary condition.

We generate the dataset by solving the boundary value problem using the finite difference method with three types of meshes. 
The first is a uniform mesh with a mesh size of $2\times10^{-3}$ (Uniform). 
The second is an exponentially graded mesh, with the mesh size set to $10^{-4}$ at the right end and $10^{-2}$ at the left end, employing a growth ratio of $1.05$ (Exponential). 
The third is a linearly graded mesh with mesh size set to $10^{-3}$ at the right end and $10^{-2}$ at the left end (Linear). 
The latter two meshes are adaptive to better resolve the boundary layer near the right end (See \cref{fig:adv_pred}).
For each mesh type, we randomly generate 2500 samples, which are then combined into a mixed mesh dataset (Mixed). The operator maps the source function $f$, diffusivity $D$, the left boundary condition $u_l$ (treated as constant functions over $\Omega$), and the domain $\Omega$ to the corresponding solution $u$:
\begin{equation*}
    \G^{\dagger} : (f, D, u_l, \Omega) \mapsto u. 
\end{equation*}

\begin{figure}
    \centering
    \includegraphics[width=0.9\linewidth]{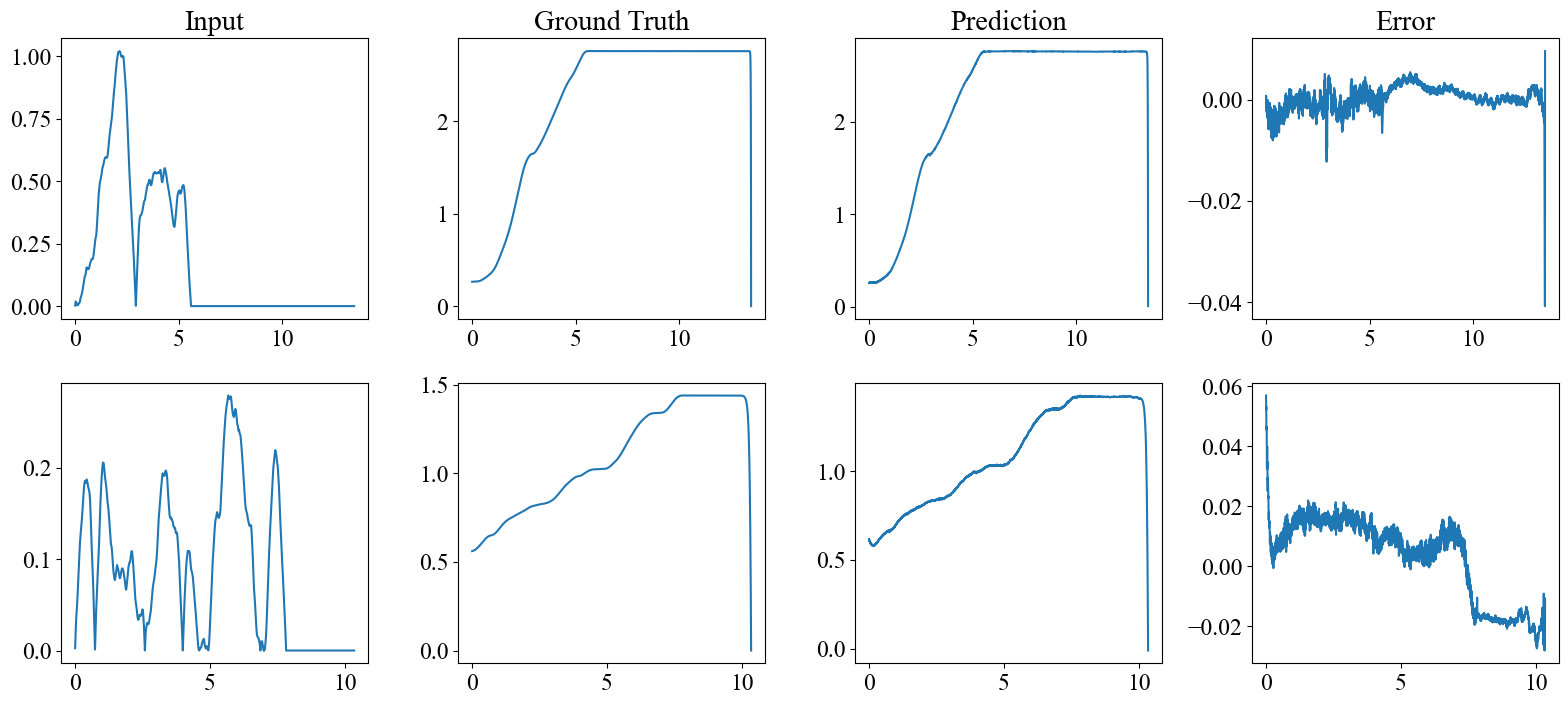}
    \caption{Advection diffusion problem: From left to right, the panels display the source field $f(x)$, the reference solution $u(x)$, the predicted solution, and the error (defined as the difference between the reference and predicted solutions), for the test cases with the median relative $L_2$ error (top) and the largest relative $L_2$ error (bottom).}
    \label{fig:adv_pred}
\end{figure}

Next, we discuss the training process for the PCNO model. We use 64 Fourier modes in the sole dimension and set a uniform density $\rho(x,\Omega) = \frac{1}{|\Omega|}$ in the integral operator~ \eqref{eq:integral-operator-rho}.
We first train the PCNO with 1000 data points from the Mixed dataset and test it with 600 data points. In this setup, our PCNO model achieves a relative $L_2$ test error of $0.167\%$ . The relative training and test errors during iterations, as well as the distribution of test errors for each mesh type, are visualized in \cref{fig:adv_loss}. 
The error distributions across the three mesh types are similar, indicating that the PCNO model can effectively handle meshes with different point distributions.
\Cref{fig:adv_pred} further illustrates the model's predictions, showing test samples with median and largest relative $L_2$ errors, both of which belong to the uniform distribution mesh. The largest error can be attributed to the pronounced fluctuations in the source field, which result in reduced smoothness or regularity in the solution. However, even in the case of the largest error, the overall error remains very small.

\begin{figure}
    \centering
    \includegraphics[width=0.39\linewidth]{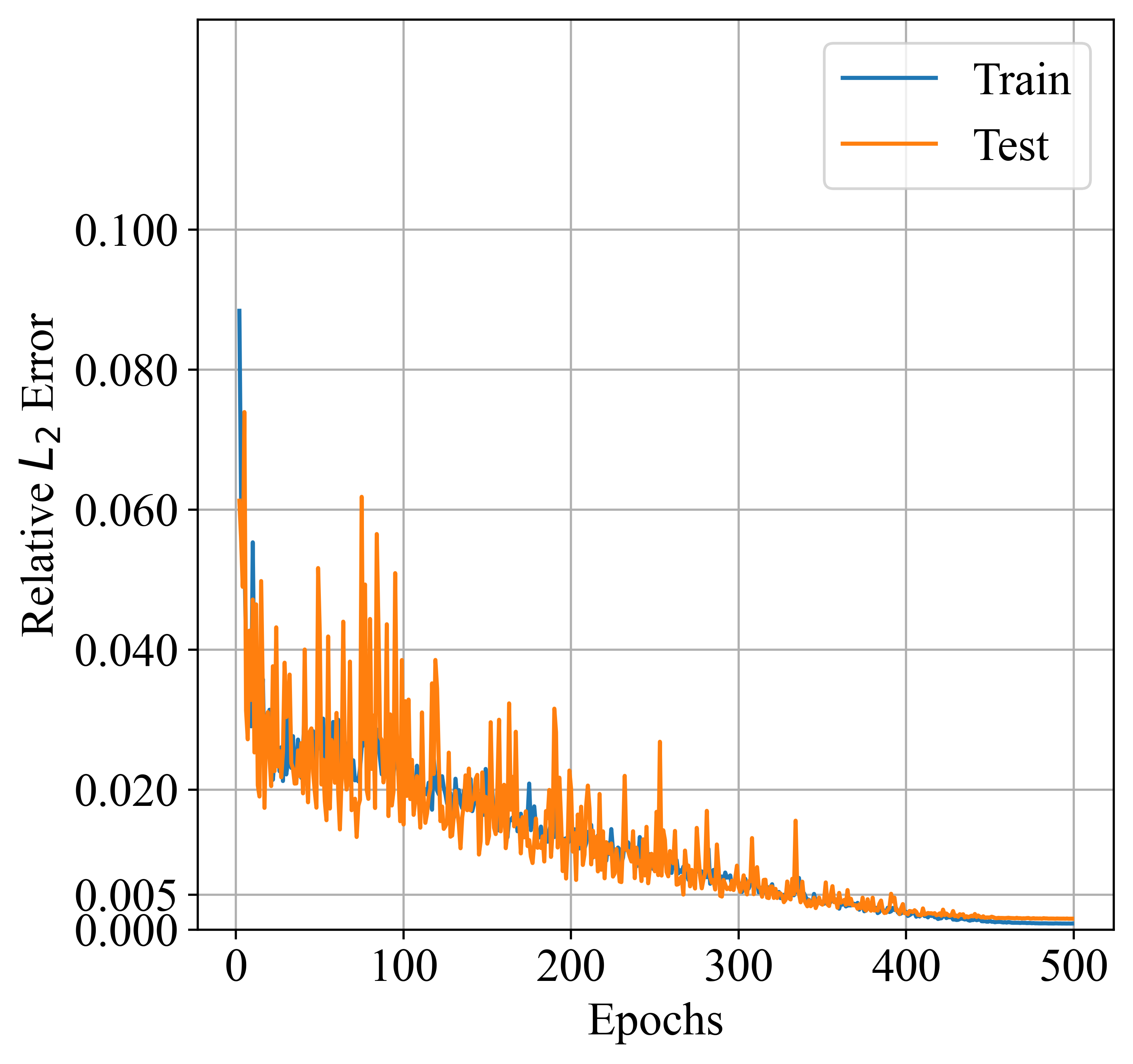}~~~~
    \includegraphics[width=0.355\linewidth]{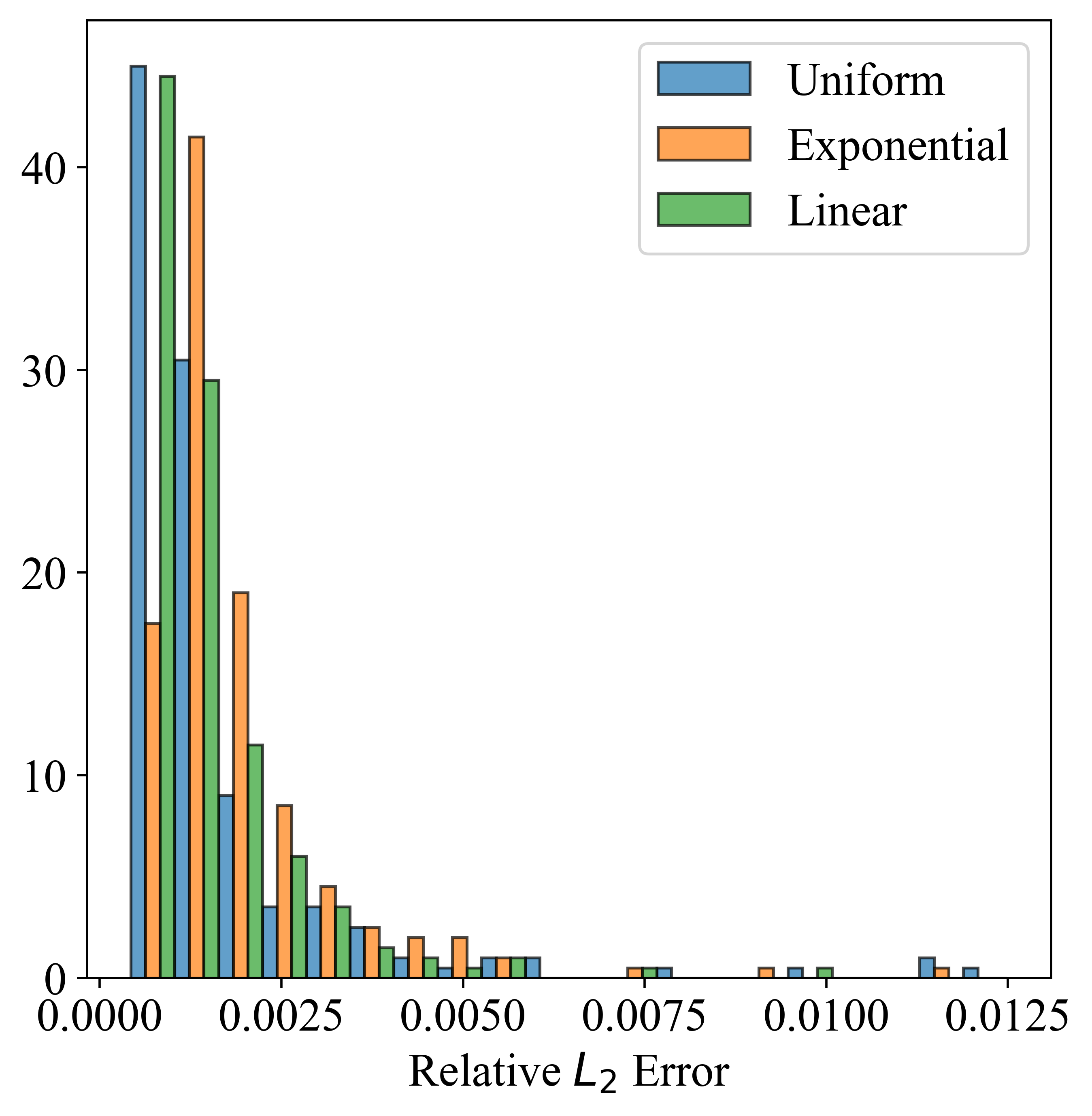}
    \caption{Advection diffusion problem: The relative training and test errors over epochs (left) and the distribution of the test errors (right). The training and test datasets consist of 1000 and 600 samples, respectively, uniformly sampled from the Mixed dataset. The test error distribution is visualized separately for the uniform, exponential, and linear mesh cases.}
    \label{fig:adv_loss}
\end{figure}

Furthermore, we study the choice of density function $\rho(x,\Omega)$ introduced in \cref{ssec:integral-op}. Specifically, we train and test the PCNO model using both uniform density and point cloud density on the Mixed dataset with 500, 1000, and 1500 training samples. The test errors are shown in \cref{fig:adv_equal_weight} (left). 
Although point cloud density incorporates mesh adaptivity information, which tends to improve accuracy, the test results in \cref{fig:adv_equal_weight} show that the point cloud density performs notably worse compared to uniform density.
This disparity arises because there are significant variations in point cloud density across these three types of meshes. However, the use of point cloud density requires  
that each $\Omega$ be associated with a unique $\rho(x;\Omega)$. Violating this requirement causes point cloud density to fail in accurately capturing the necessary information.

Finally, we point out that neural operators trained on one type of discretization (e.g., Uniform, Exponential, or Linear) may generalize well to other types of discretization. We train the PCNO models on these four different datasets: Uniform, Exponential, Linear, and Mixed, with 500, 1000, and 1500 training samples, and test them on the Mixed dataset with 600 data points. 
The results are shown in \cref{fig:adv_equal_weight} (right). The figure shows except for the Uniform dataset (which may suffer from overfitting), training on the various datasets, results in relative test errors scaling at a rate of $\bigO(n^{-1/2})$ as the number of training samples increases. This highlights the importance of designing neural operators at the continuous level to mitigate the impact of discretization.
The figure further demonstrates that trained on the Mixed dataset yields the best performance, guiding us to train neural operators on mixed discretization datasets to enhance generalization and robustness. 

\begin{figure}
    \centering
    \includegraphics[width=0.4\linewidth]{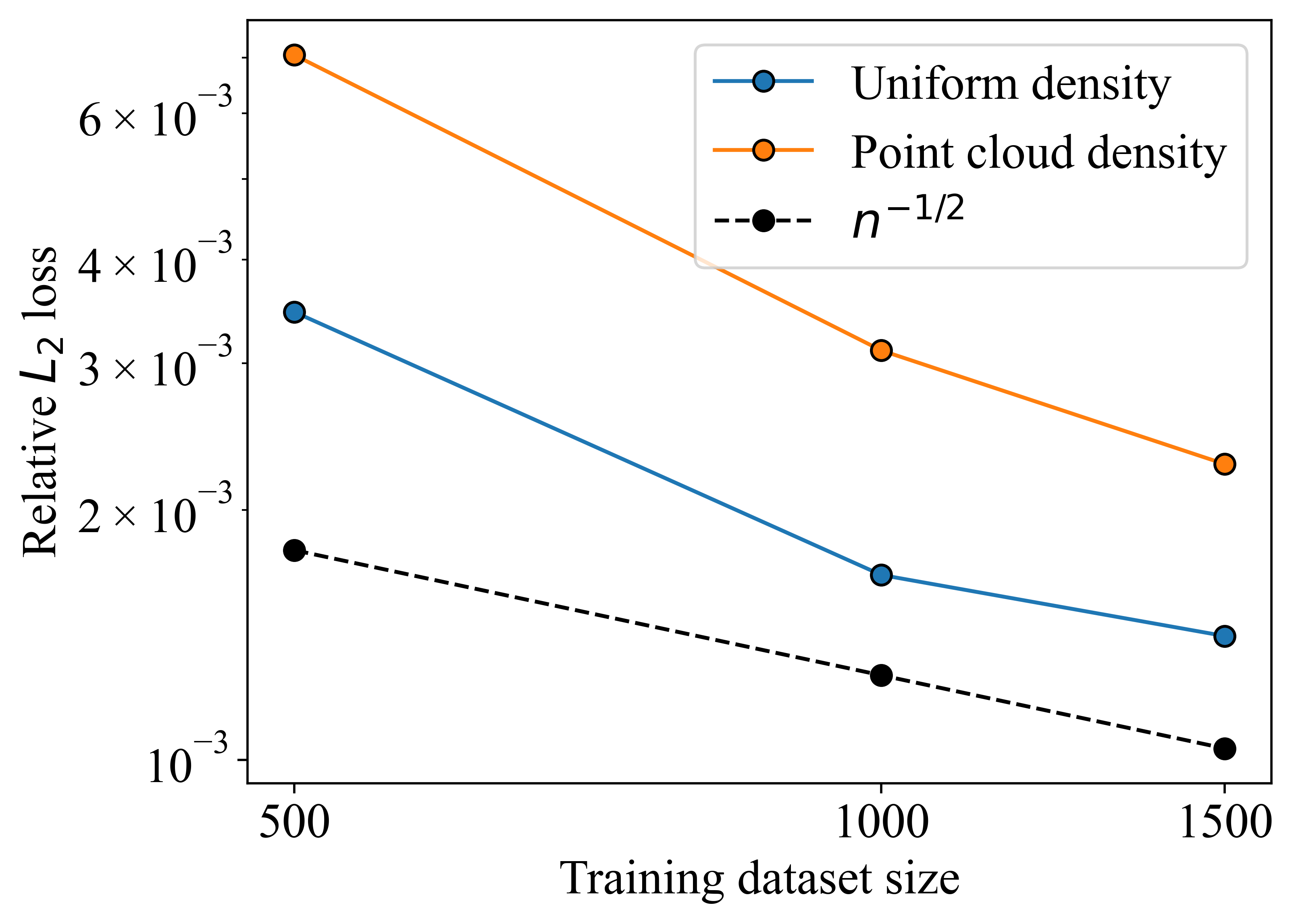}
    \includegraphics[width=0.55\linewidth]{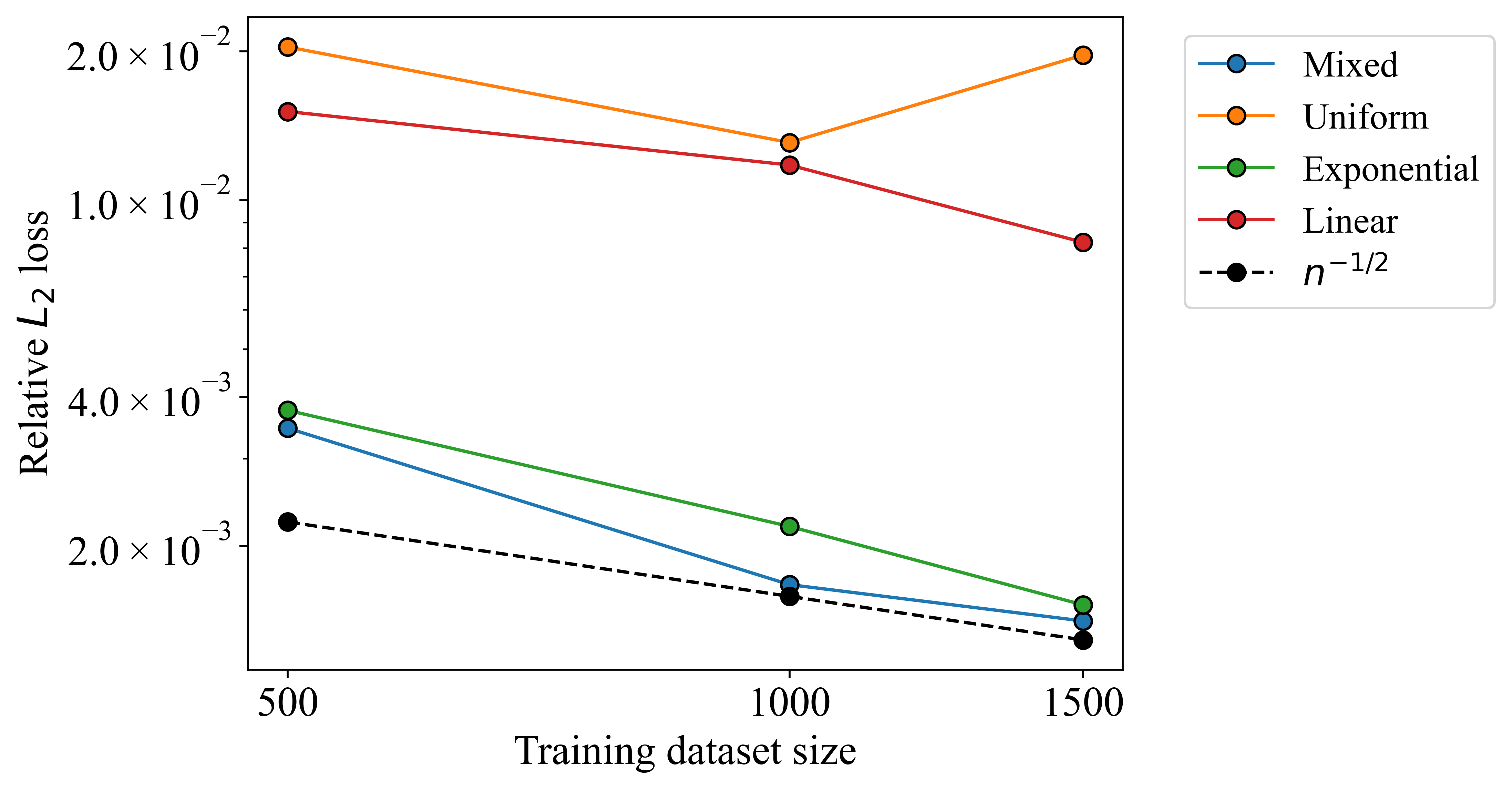}
    \caption{Advection diffusion problem: The relative test errors of PCNO with uniform density and point cloud density for $\rho(x,\Omega)$ trained on Mixed datasets of varying sizes (left). The relative test errors of PCNO trained on different datasets (Mixed, Uniform, Exponential, and Linear) with varying dataset sizes and tested on the Mixed dataset (right).}
    \label{fig:adv_equal_weight}
\end{figure}

\subsection{Darcy Flow Problem}
\label{ssec:darcy}

In this subsection, we consider solving the steady-state, two-dimensional Darcy
flow equation
\begin{equation*} 
\begin{split}
-\nabla\cdot(a\nabla u) &= f~\text{in}~~\Omega, \\
u &= 0~\text{on}~\partial \Omega,
\end{split}
\end{equation*} 
defined on a variable computational domain $\Omega$. 
Here $f = 1$ represents the fixed source field, $a$ is the positive permeability field, and $u$ denotes the pressure field. 
To parametrize the boundary $\partial \Omega$ of the 
computational domain $\Omega$, we use polar coordinates given by  
$$(x, y) = (r(\theta)\cos(2\pi\theta), r(\theta)\sin(2\pi\theta)),\quad \theta \in [0,1],$$ 
where $r(\theta)$ is a function constructed by connecting randomly sampled points 
$\{(\frac{j}{n},r_j)\}_{j=0}^n$ with $r_0 = r_n$ and $n= 5$ with Gaussian process regression with radial basis function (RBF) kernel.  The values $r_j, 0 \leq j \leq n-1$ are uniformly sampled from the interval $U [0.5,1.5]$. Note that there might be a kink at $\theta=0$, resulting in a non-smooth boundary for the domain.
The permeability field is generated by first sampling $a(ih,jh)$ for $h = 0.75$ and $-2 \leq i,j \leq 2$ uniformly from  $U[0.5,1.5]$, then connecting these randomly sampled points with Gaussian process regression.

We generate the dataset by solving the Darcy flow equation using the finite element method at two mesh resolutions generated with Gmsh~\cite{geuzaine2009gmsh}. The fine mesh has a resolution of approximately $0.03$, while the coarse mesh has a resolution of approximately $0.06$.  For each mesh resolution, we generate 2000 data pairs. The operator then maps from the permeability field $a$ and the domain $\Omega$ to the corresponding solution $u$:
\begin{equation*}
    \G^{\dagger} : (a, \Omega) \mapsto u. 
\end{equation*}

Next, we discuss the training process for the PCNO model.
We retain 16 Fourier modes in each spatial direction, summing over $k = (k_1, k_2)$ where $-16 \leq k_i \leq 16$ in \eqref{eq:pcno-layer}, and set a uniform density $\rho(x;\Omega)=\frac{1}{|\Omega|}.$
The PCNO model is trained on a dataset consisting of 500 fine-mesh samples and 500 coarse-mesh samples, resulting in a total of 1000 training samples. The model is then tested on a separate dataset comprising 200 fine-mesh samples and 200 coarse-mesh samples.  
In this setup, the PCNO model achieves a relative $L_2$ test error of approximately $0.683\%$. The relative training and test errors over iterations, as well as the distribution of test errors, are visualized in \cref{fig:darcy_flow_loss}. The test errors and their distributions for fine-mesh and coarse-mesh samples are comparable, with all test errors being notably small. 
\Cref{fig:darcy_flow_pred} further visualizes the model's predictions, showing test samples with median and largest relative $L_2$ errors, the largest error and median error both occur on the coarse mesh.
Even in the case of the largest error, the PCNO model successfully identifies the boundaries of the point cloud where the solution approaches zero. Meanwhile, the error distribution is uniform, without the phenomenon of suddenly large errors occurring near some points, demonstrating its capability to handle complex geometries effectively.

\begin{figure}
    \centering
    \includegraphics[width=0.39\linewidth]{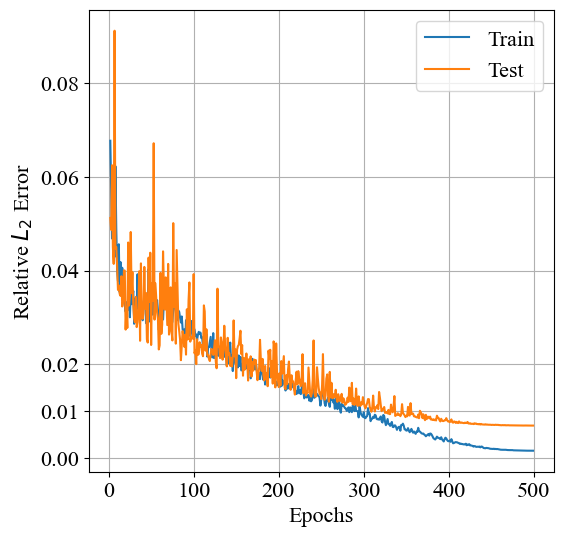}~~~~
    \includegraphics[width=0.355\linewidth]{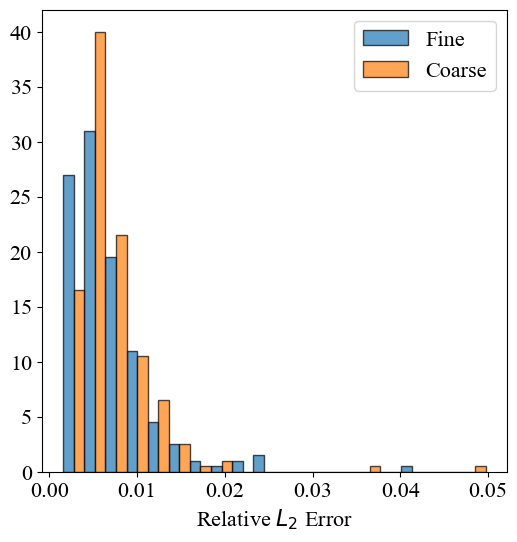}
    \caption{
Darcy flow problem: The relative training and test errors over epochs (left) and the distribution of the test errors (right). The training and test datasets consist of 1000 and 400 samples, respectively, equally sampled for fine-mesh and coarse-mesh cases. The test error distribution is visualized separately for fine-mesh (Fine) and coarse-mesh (Coarse) cases.
}
    \label{fig:darcy_flow_loss}
\end{figure}

\begin{figure}
    \centering
    \includegraphics[width=0.9\linewidth]{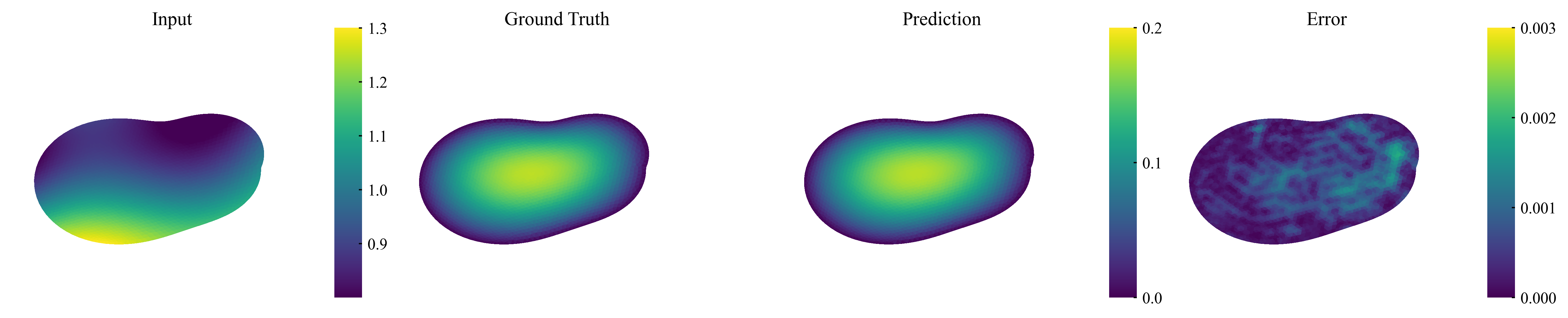}
    \includegraphics[width=0.9\linewidth]{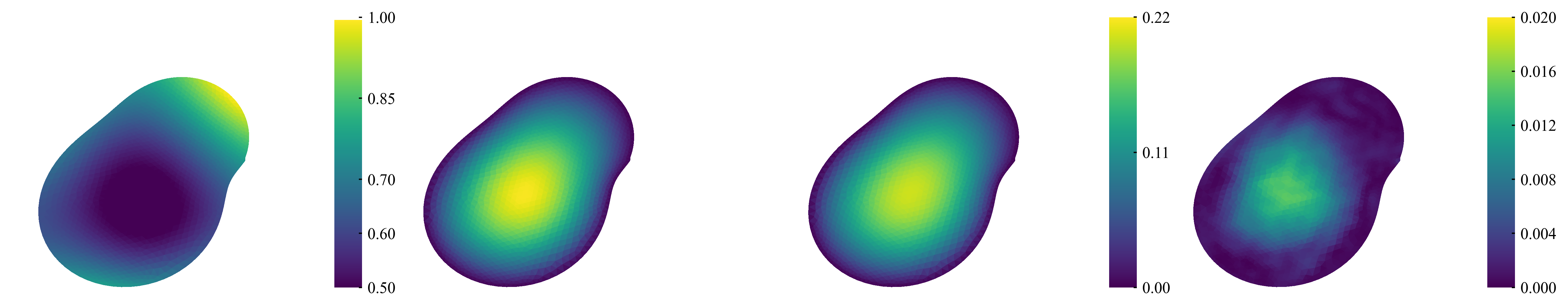}
    \caption{Darcy flow problem:  
    From left to right, the panels display the permeability field $a(x)$, the reference solution $u(x)$, the predicted solution, and the error (defined as the difference between the reference and predicted solutions), for the test cases with the median relative $L_2$ error (top) and the largest relative $L_2$ error (bottom).}
    \label{fig:darcy_flow_pred}
\end{figure}

Furthermore, we evaluate the performance of PCNO with respect to mesh resolution.  
Specifically, we consider three types of training configurations using different datasets: fine-mesh data (Fine), coarse-mesh data (Coarse), and an equal-size mixture of both (Mixed). The training datasets contain 500, 1000, and 1500 samples. 
For the test datasets, we use 200 fine-mesh samples (Fine) and 200 coarse-data samples (Coarse). Models trained under these different configurations are evaluated on both test datasets, and the results are presented in \cref{fig:darcy_flow_study}.
The results show that for all configurations, as the number of training samples increases, the relative test error decreases at a rate of $\bigO(n^{-1/2})$. This trend persists even when the training and test datasets have different resolutions, which highlights the robustness of neural operators formulated at the continuous level in mitigating the effects of discretization.
Training PCNO on the Mixed dataset yields superior performance.
Notably, combining 750 fine-mesh data with 750 coarse-mesh data outperforms training exclusively on 750 or even 1500 fine-mesh samples.  This result emphasizes the potential of leveraging multifidelity datasets to improve the training of neural operators.

\begin{figure}
    \centering
    \includegraphics[width=0.45\linewidth]{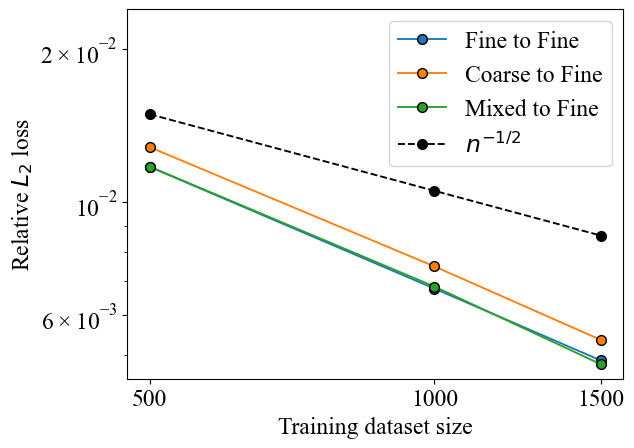}
    \includegraphics[width=0.45\linewidth]{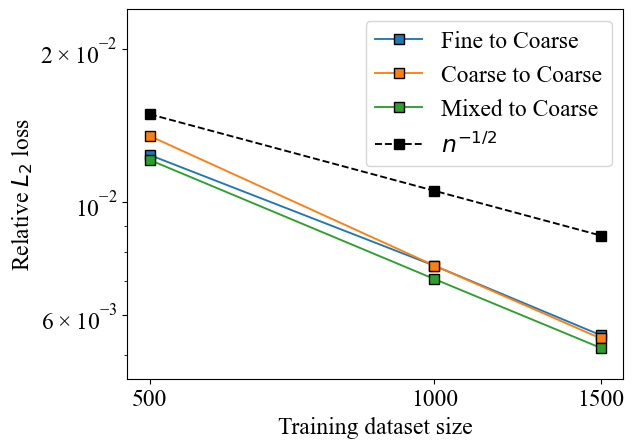}
    \caption{Darcy flow problem: The relative test errors of PCNO trained on different datasets (Fine, Coarse, and Mixed) with varying dataset sizes,  and tested on the fine-mesh dataset (left) and the coarse-mesh dataset (right).}
    \label{fig:darcy_flow_study}
\end{figure}

\subsection{Flow Over Airfoil}
\label{ssec:airfoil_flap}
In this subsection, we consider the flow over airfoil problem by solving the steady-state, two-dimensional Euler equations
\begin{equation}
\begin{split}
\nabla \cdot (\rho \bm{v}) = 0, \\
\nabla \cdot (\rho \bm{v} \otimes \bm{v} + p \I) = 0, \\
\nabla \cdot  \Bigl( (E + p)\bm{v} \Bigr) = 0 
\end{split}
\end{equation}
in a variable domain $\Omega$ surrounding different airfoil configurations. Here $\rho$ represents the fluid density, $\bm{v}$ is the velocity vector, $p$ is the pressure, and $E$ is the total energy. The viscous effect is ignored. The far-field boundary conditions are specified as 
$$\rho_{\infty} = 1 , p_{\infty}  = 1.0 , M_{\infty} = 0.8,$$
where $M_{\infty}$ is the Mach number, and hence shock waves are generated around the airfoil.  At the airfoil, a no-penetration velocity condition $\bm{v} \cdot n = 0$ is imposed.
The computational domain is defined as a circle with a radius of $r = 50$, excluding the airfoil shape. Two airfoil configurations are considered: a single main airfoil (Airfoil) and a main airfoil with a flap (Airfoil+Flap). Consequently, the computational domain exhibits varying topologies. Both the main airfoil and the flap are generated from NACA four-digit airfoil profiles. The main airfoil has a chord length of $1$, while the flap has a chord length of $0.2$ and is positioned at a distance of $(-0.015, 0.05)$ relative to the main airfoil. Other design parameters, such as camber, thickness, and angle of attack, are varied, with details provided in \cref{tab:airfoil}.

\begin{table}
\centering
\begin{tabular}{|c|c|c|}
\hline 
\multicolumn{2}{|c|}{Design variable} & \multicolumn{1}{c|}{Range} \\
\hline 
\multirow{4}{*}{Main airfoil} &  Camber-to-chord ratio   & $0\% \sim 9\%$     \\   
&  Max camber location    & $20\% \sim 60\%$       \\  
&  Thickness-to-chord ratio  & $5\% \sim 30\%$     \\  
&  Angle of attack  & $-5^\circ \sim 20^\circ$   \\  
\cline{1-3}
\multirow{4}{*}{Flap} &   Camber-to-chord ratio    & $0\% \sim 9\%$     \\   
&  Max camber location   & $20\% \sim 60\%$       \\  
&  Thickness-to-chord ratio & $10\% \sim 20\%$     \\  
&  Relative angle of attack & $5^\circ \sim 40^\circ$   \\  
\hline 
\end{tabular}
\caption{Flow over airfoil: The geometric parameters include the camber-to-chord ratio, the location of maximum camber relative to the chord length, the thickness-to-chord ratio, and the angle of attack for both the single airfoil configuration and airfoil-with-flap configuration.}
\label{tab:airfoil}
\end{table}

We generate the dataset by solving the Euler equations using the AERO-Suite \cite{farhat1998load,wang2011algorithms,farhat2010robust,huang2018family,borker2019mesh,michopoulos2024bottom}. The mesh is generated adaptively by Gmsh~\cite{geuzaine2009gmsh}, with sizes near the main airfoil and flap approximately $5 \times 10^{-3}$ and $2.5 \times 10^{-3}$, respectively, and $5$ at the far field. Within the domain, the mesh size transitions linearly. For each configuration, we generate approximately 2000 data. The operator maps the domain $\Omega$ to the pressure field $p$:
\begin{equation*}
    \G^{\dagger} : \Omega \mapsto p. 
\end{equation*}

Next, we discuss the training process for the PCNO model. We retain 16 Fourier modes in each spatial direction and set the density function as the point cloud density: $\rho(x^{(i)};\Omega) = \frac{1}{N |\dd \Omega_i|}$ in \eqref{eq:integral-operator-rho}. 
It is worth mentioning this choice of density function yields superior results compared to using a uniform density function.
The Point Cloud Neural Operator is first trained on a mixed dataset consisting of 500 samples from the single-airfoil configuration and 500 samples from the airfoil-with-flap configuration, resulting in a total of 1000 training samples. The network is then tested on 500 mixed samples. In this setup, the PCNO model achieves a relative $L_2$ test error of approximately  $1.83\%$, compared to $8.09\%$ test error when using the uniform density function. 
The relative training and test errors over iterations, as well as the distribution of test errors, are visualized in \cref{fig:airfoil_loss}.
The error distributions across the two configurations are similar, indicating that the PCNO model can effectively handle topology variations.
\Cref{fig:airfoil_pred} further illustrates the model's predictions, showing test samples with median and largest relative $L_2$ errors. It is observed that the error is concentrated along the shock, with the case of the largest error exhibiting a deviation in identifying the shock location.
\begin{figure}
    \centering
    \includegraphics[width=0.39\linewidth]{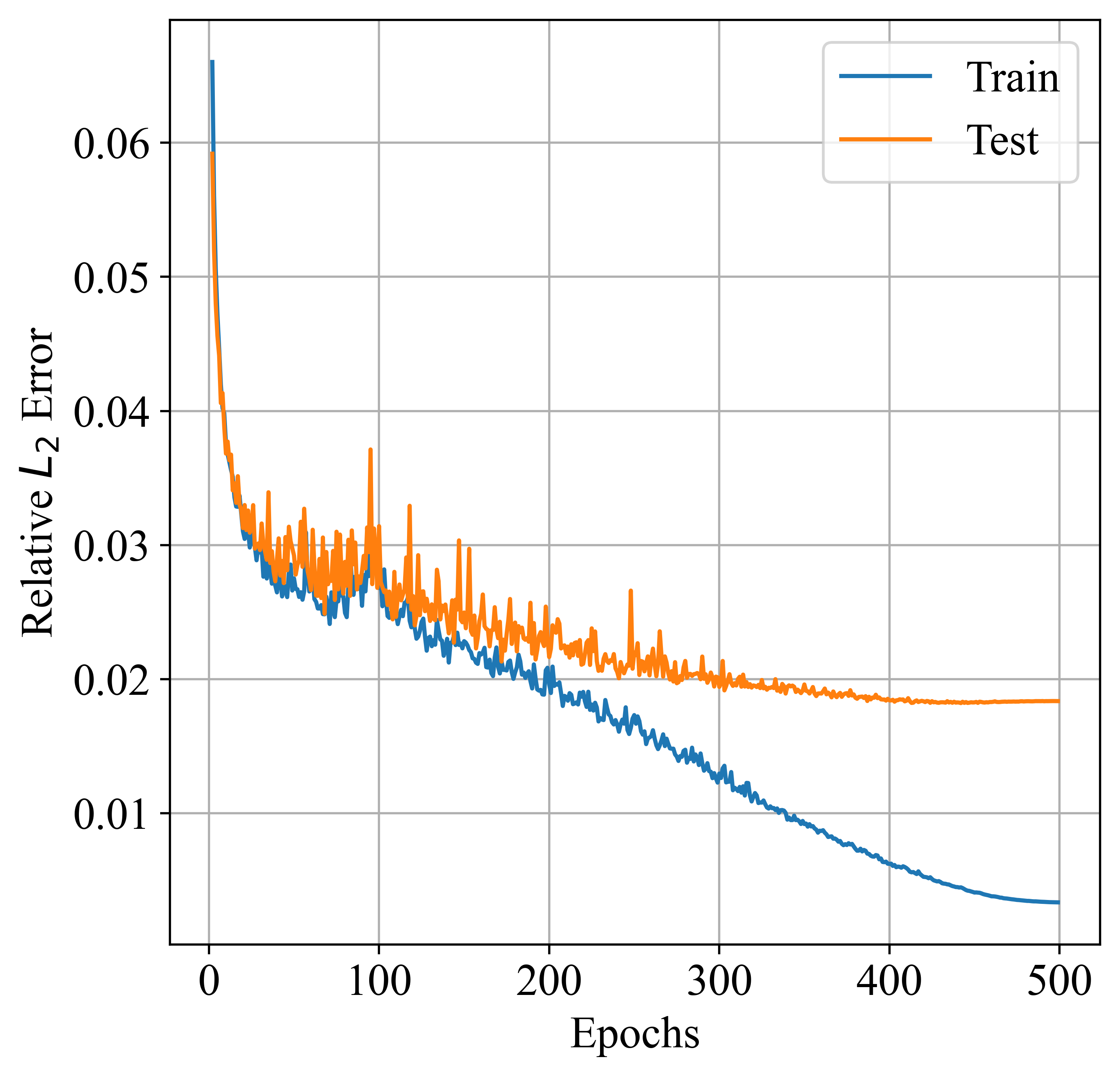}~~~~
    \includegraphics[width=0.355\linewidth]{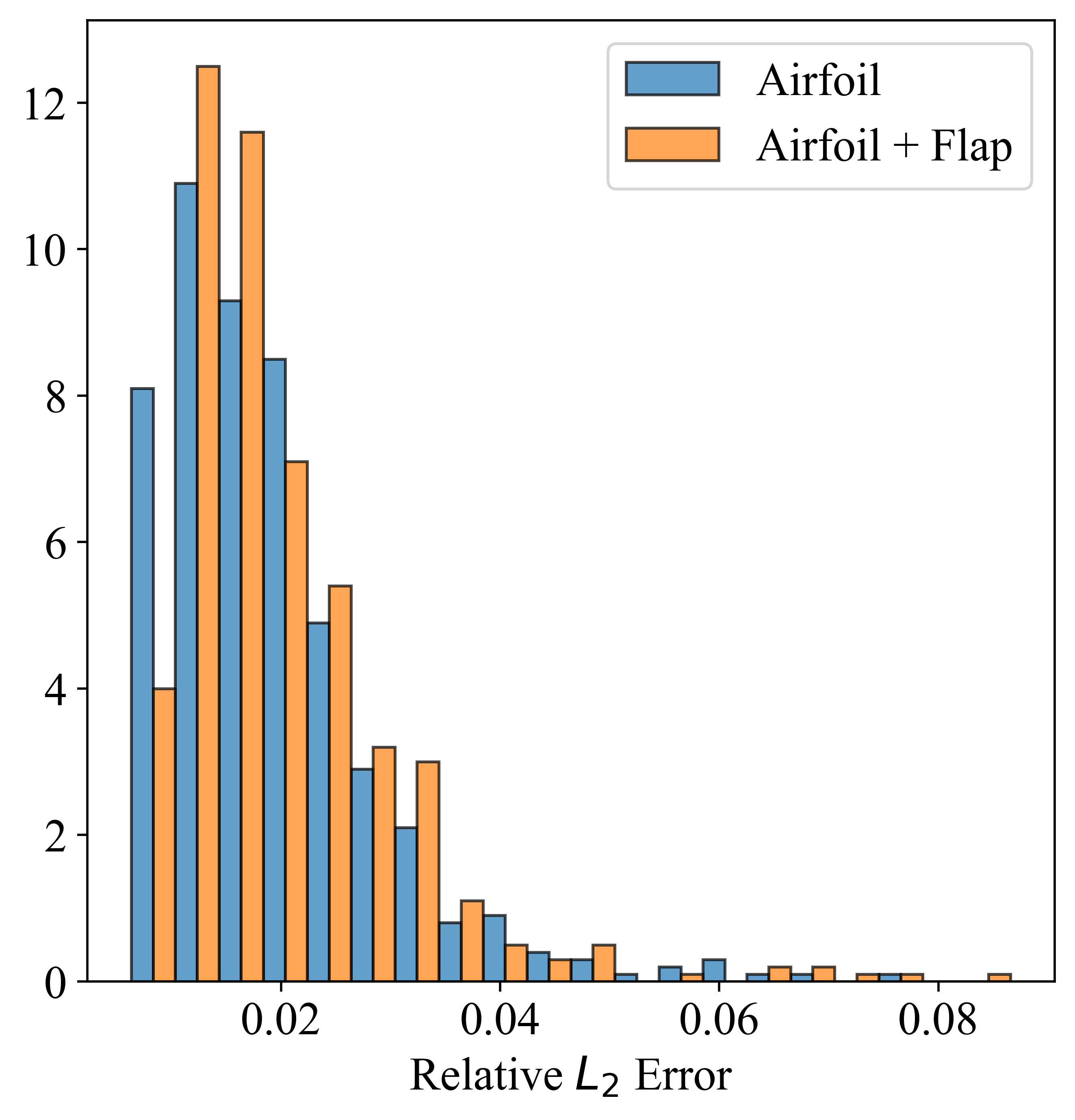}
    \caption{
    Flow over airfoil: The relative training and test errors over epochs (left) and the distribution of the test errors (right). The training and test datasets consist of 1000 and 500 samples, respectively, equally sampled for single-airfoil (Airfoil) and airfoil-with-flap (Airfoil+Flap) configurations. The test error distribution is visualized separately for each configuration.}
    \label{fig:airfoil_loss}
\end{figure}

\begin{figure}
    \centering
    \includegraphics[width=\linewidth]{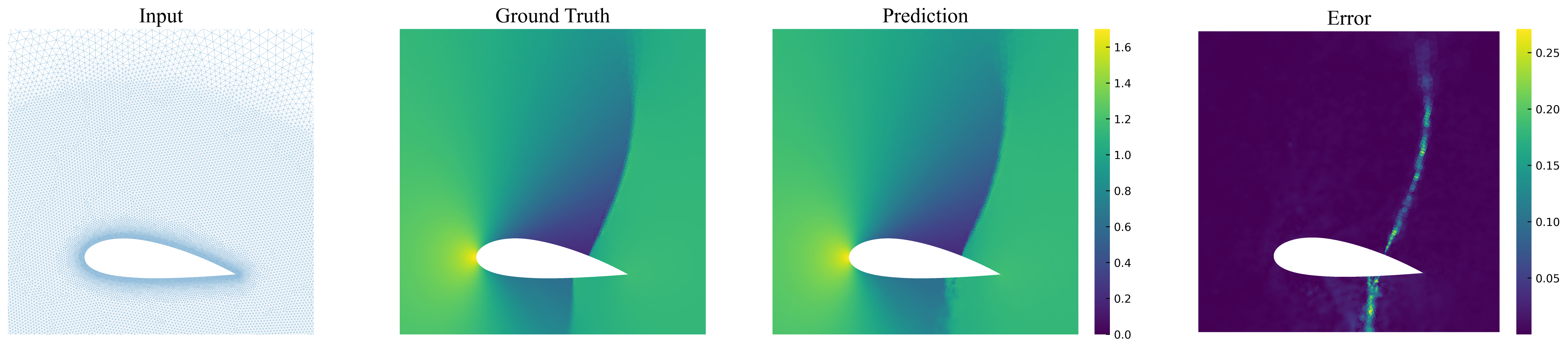}
    \includegraphics[width=\linewidth]{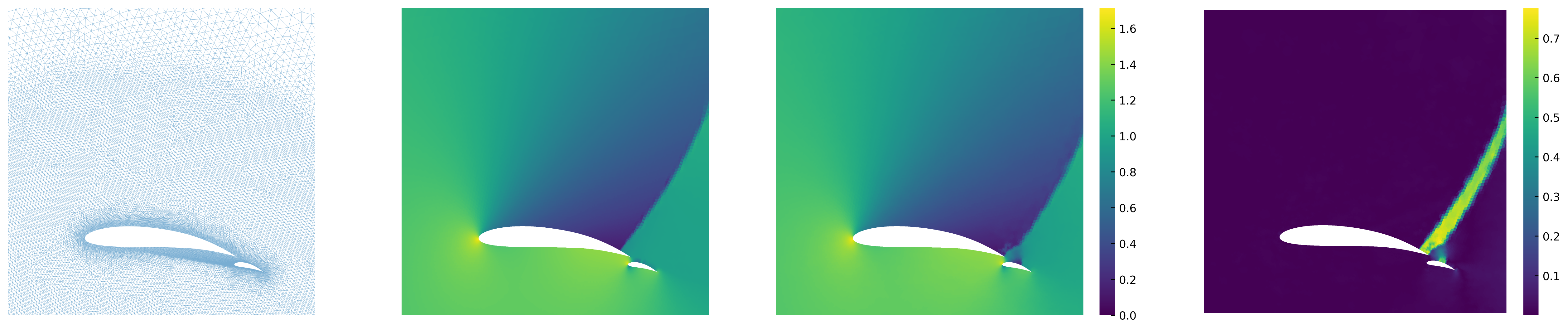}
    \caption{
    Flow over airfoil:  
    From left to right, the panels display the input mesh representing $\Omega$, the reference solution $p(x)$, the predicted solution, and the error (defined as the difference between the reference and predicted solutions), for the test cases with the median relative $L_2$ error (top) and the largest relative $L_2$ error (bottom).
    }
    \label{fig:airfoil_pred}
\end{figure}

Finally, we evaluate the performance of the PCNO with respect to topology variations. Specifically, we investigate three types of training configurations: the single-airfoil configuration (Airfoil), the airfoil-with-flap configuration (Airfoil+Flap), and an equal-size mixture of both (Mixed). For each configuration, we train the PCNO with datasets containing 500, 1000, and 1500 samples. We then evaluate the performance by testing on both single-airfoil and airfoil-with-flap configurations. The relative $L_2$ errors are presented in \cref{fig:airfoil_loss_compare}. For this problem, the error decreases with the training data size at a rate of approximately $\bigO(n^{-1/5})$ instead of $\bigO(n^{-1/2})$. 
A possible explanation is that the predicted pressure field is rough and features discontinuities, with the error being concentrated near these discontinuities.
Furthermore, the top flat curves in \cref{fig:airfoil_loss_compare} indicate that training the PCNO on one configuration does not generalize well to the other. Training on the Mixed dataset with 1000 data performs similarly to training on a single configuration (Airfoil or Airfoil+Flap) with 500 data and testing on the same configuration. This indicates that although PCNO does not show generalization across topology variations, the model can effectively handle topology variations by adapting to different configurations within the same framework.


\begin{figure}
    \centering
    \includegraphics[width=0.45\linewidth]{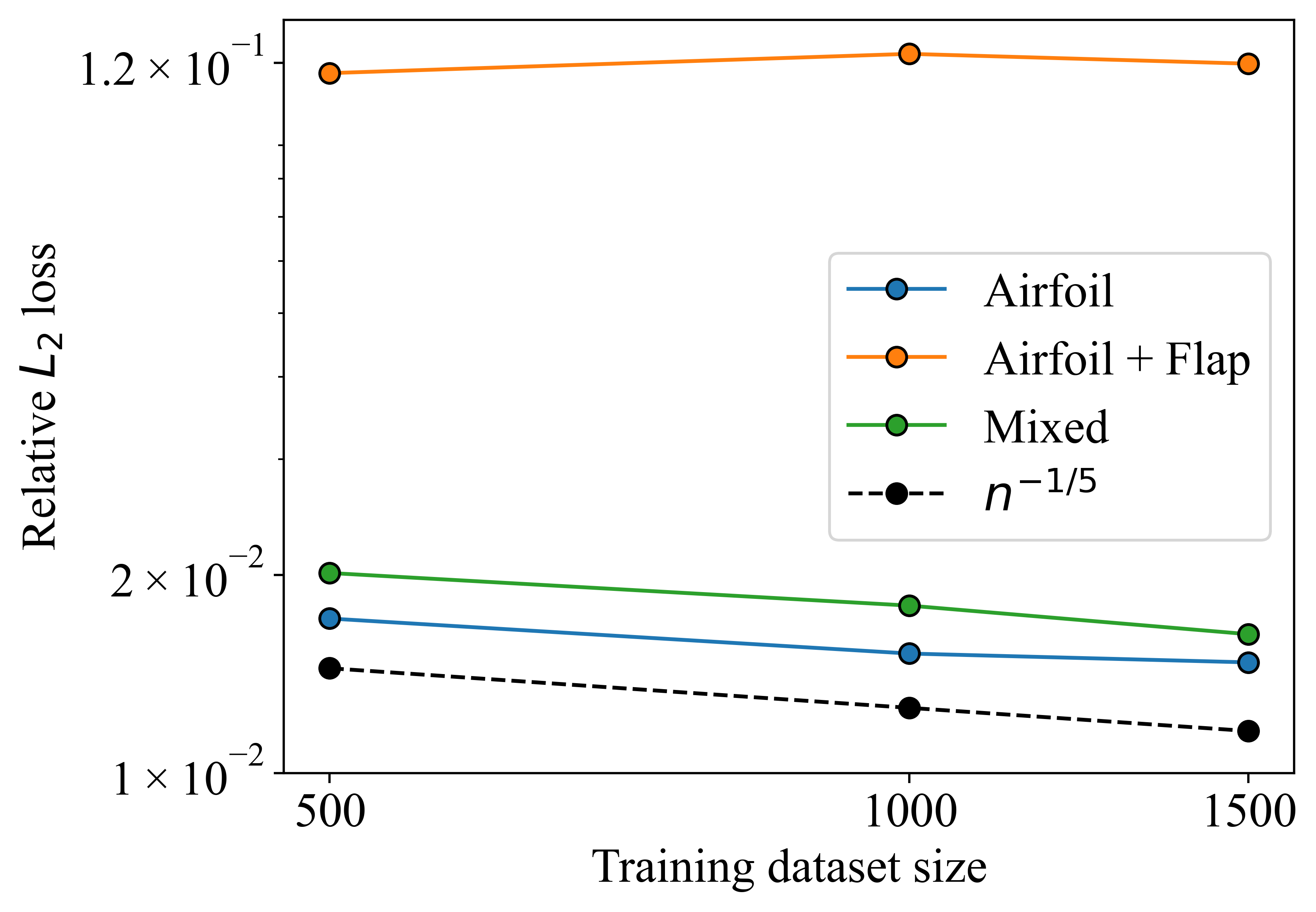}
    \includegraphics[width=0.45\linewidth]{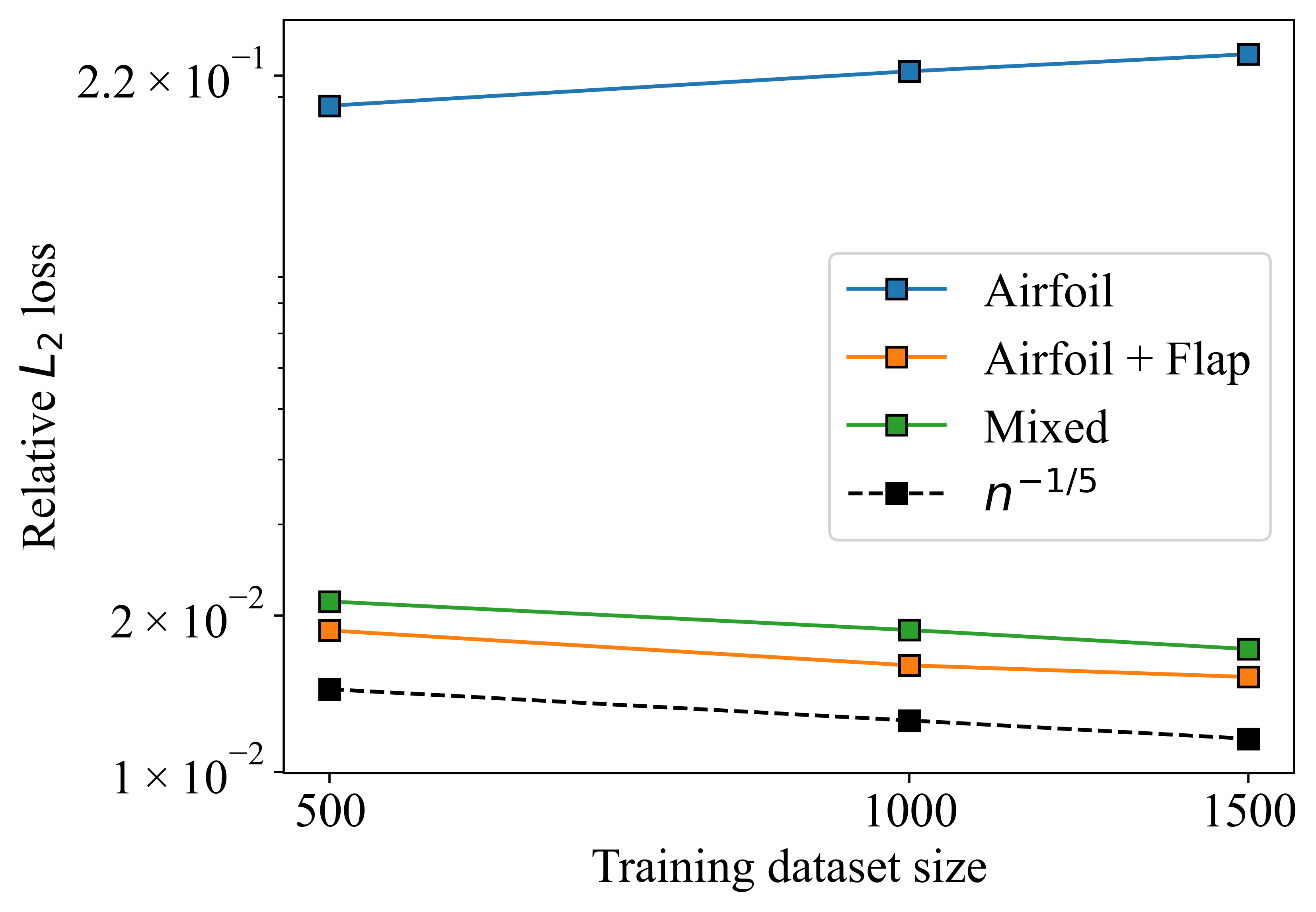}
    \caption{Flow over airfoil:  The relative test errors of PCNO trained on different configurations,  single-airfoil (Airfoil), airfoil-with-flap (Airfoil+Flap), and an equal-size mixture of both (Mixed), with varying dataset sizes,  and tested on both the single-airfoil configuration (left) and the airfoil-with-flap configuration (right).}
    \label{fig:airfoil_loss_compare}
\end{figure}


\subsection{Vehicle Application}
\label{ssec:vehicle}

In this subsection, we consider predicting the surface pressure load on different vehicles. Specifically, we follow the setup described in \cite{li2024geometry} and consider the ShapeNet car dataset and the Ahmed body dataset.
For both datasets, the three-dimensional Reynolds-averaged Navier-Stokes (RANS) equations
\begin{equation*} 
\begin{split}
(\bm{\overline{u}} \cdot \nabla)\bm{\overline{u}} + \frac{1}{\rho}\nabla \overline{p} + \nu \nabla^2 \bm{\overline{u}} + \nabla \cdot \bm{\tau}^R = 0, \\
\nabla \cdot \bm{\overline{u}} = 0    
\end{split}
\end{equation*} are solved around the vehicle. Here $\bm{u}$ denotes the time averaged flow velocity, $\rho$ is the constant flow density, $\overline{p}$ denotes the time averaged flow pressure, $\nu$ is the constant kinematic viscosity of the flow. And $\bm{\tau}^R$ denotes the  Reynolds stress tensor owing to the fluctuating velocity field, and this term is approximated by an appropriate turbulence model to close the RANS equations. On the vehicle, the no-slip velocity boundary condition $\bm{\overline{u}} = 0$ is applied and on the farfield, the constant inflow condition is applied. The primary goal is to estimate the time-averaged pressure field on the vehicle surface, a critical quantity in vehicle design.

For the ShapeNet car dataset \cite{umetani2018learning}, which was generated using a finite element Navier-Stokes solver \cite{taylor2024finite}, the inflow velocity is fixed at $72~\mathrm{km/h}$, corresponding to the Reynolds number about $Re = 5 \times 10^6$.
Various car shapes, such as sports cars, sedans, and SUVs, are collected from \cite{chang2015shapenet}, resulting in a total of $611$ samples. 
These car shapes are manually modified to remove side mirrors, spoilers, and tires, producing surface meshes with approximately $3.7k$ mesh points. The operator maps from the car's surface geometry $\Omega$ to the corresponding pressure load $\overline{p}$ on it:
\begin{equation*}
    \G^{\dagger} : \Omega \mapsto \overline{p}. 
\end{equation*}
For the Ahmed body dataset generated using a  GPU-accelerated OpenFoam solver \cite{jasak2007openfoam}, the inflow velocity ranges from $10~\mathrm{m/s}$ to $70~\mathrm{m/s}$ corresponding to the Reynolds number ranging from $4.35\times 10^5$ to $6.82\times 10^6$. The vehicle shapes are derived from the benchmark model described in \cite{ahmed1984some} designed by Ahmed, representing a ground vehicle with a bluff body. The dataset is created by systematically varying the vehicle’s length, width, height, ground clearance, slant angle, and fillet radius, resulting in a total of $551$ samples. 
The surface mesh is highly detailed, comprising approximately $100k$ mesh points. 
Unlike \cite{li2024geometry}, we simplify the input by removing parameters, such as length, width, height, ground clearance, slant angle, and fillet radius, as these are inherently represented within the point cloud data. We retain only the point cloud position along with the inflow velocity $ v $ and the Reynolds number $ Re $ as inputs. Thus, the operator now maps as follows:
\begin{equation*}
\G^\dagger : (v, Re, \Omega) \mapsto \overline{p}.
\end{equation*}

Next, we discuss the training process for the PCNO model.
For the ShapeNet car dataset, we retain 16 Fourier modes in each spatial direction, summing over $k = (k_1, k_2, k_3)$ where $-16 \leq k_i \leq 16$ in \eqref{eq:pcno-layer}. For the Ahmed body dataset, due to memory limitations, we retain only 8 Fourier modes in each spacial direction, summing over $k = (k_1, k_2, k_3)$ where $-8 \leq k_i \leq 8$ in \eqref{eq:pcno-layer}.
In both cases, we use a uniform density $\rho(x; \Omega) = \frac{1}{|\Omega|}$. The training datasets consist of 500 samples, with the remaining data used for test: 111 samples for the ShapeNet car dataset and 51 samples for the Ahmed body dataset. The training process takes approximately 75.54 seconds per epoch for the ShapeNet car dataset and 418.74 seconds per epoch for the Ahmed body dataset.

\begin{figure}
    \centering
    \includegraphics[width=0.42\linewidth]{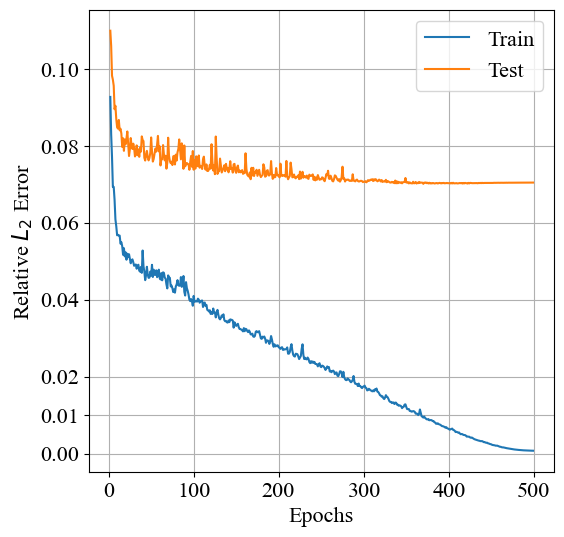}~~~~
    \includegraphics[width=0.39\linewidth]{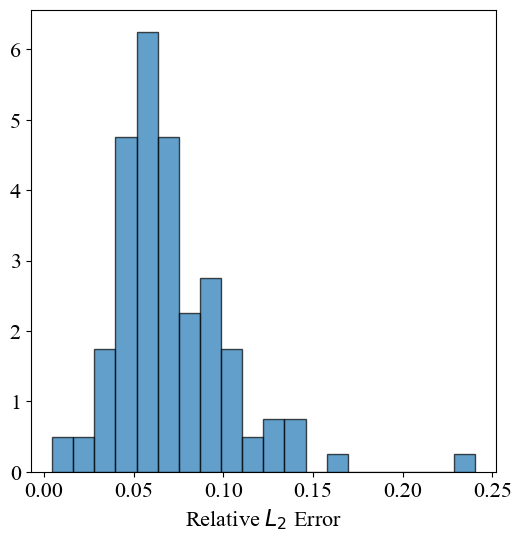}~~~
    \caption{ShapeNet car test: The relative training and test errors over iterations (left) and the distribution of the test errors (right). }
    \label{fig:car_shapenet_loss}
\end{figure}

\begin{figure}
    \centering
    \includegraphics[width=0.42\linewidth]{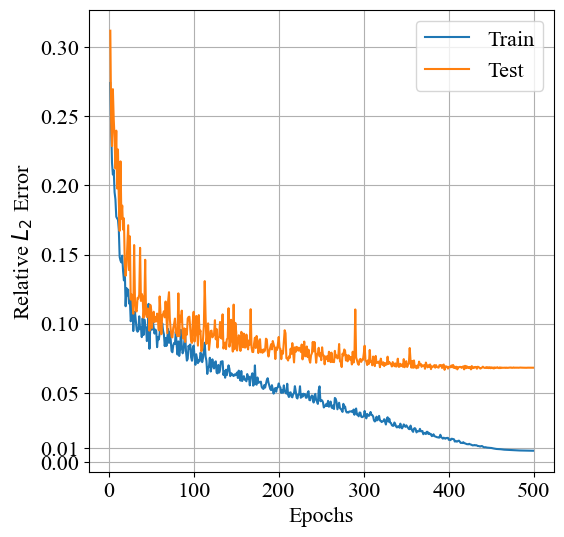}~~~
    \includegraphics[width=0.39\linewidth]{figures/vehicles_update/shapenet_loss_distribution.png}
    \caption{Ahmed body test: The relative training and test errors over iterations (left) and the distribution of the test errors (right). }
    \label{fig:ahmedbody_loss}
\end{figure}

The relative training and test errors over the iterations are visualized in  \cref{fig:car_shapenet_loss,fig:ahmedbody_loss} (left).
The resulting relative test errors are $7.04\%$ for the ShapeNet car dataset and $6.82\%$ for the Ahmed body dataset, outperforming the geometry-informed neural operator~(GINO) and the graph neural operator (GNO) results reported in \cite{li2024geometry}. However, a significant gap is observed between the training error and the test error, indicating potential overfitting due to the limited size of the training dataset.
The distributions of test errors are illustrated as bin plots in \cref{fig:car_shapenet_loss,fig:ahmedbody_loss} (right). For both datasets, notable outliers with large test errors are observed.
These outliers can likely be attributed to the limited diversity and size of the training dataset.
To further analyze the outliers, we visualize the test cases with the largest relative $L_2$ errors in \cref{fig:car_shapenet_pred,fig:ahmedbody_pred} (bottom). For the ShapeNet car dataset, the outlier is primarily due to the deep indentation at the front of the car. In the Ahmed body dataset, the outlier is characterized by a notably short length and large slant angle, combined with a smaller reference pressure load and significant error on the back face, resulting in the largest relative $L_2$ error.
Additionally, test cases with median relative $L_2$ errors are presented in the same figures (\cref{fig:car_shapenet_pred,fig:ahmedbody_pred} top). In these cases, the predictions align closely with the reference, suggesting that the model performs well on more representative samples. 
In terms of acceleration, the model preprocesses and predicts a single instance in 0.626 and 0.170 seconds, respectively, for the ShapeNet car dataset, and 10.976 and 0.172 seconds for the Ahmed body dataset. Therefore, we achieve significant speedup, as GPU-accelerated traditional solvers with parallelization take several hours to compute a single instance \cite{li2024geometry}.

\begin{figure}
    \centering
    \includegraphics[width=0.9\linewidth]{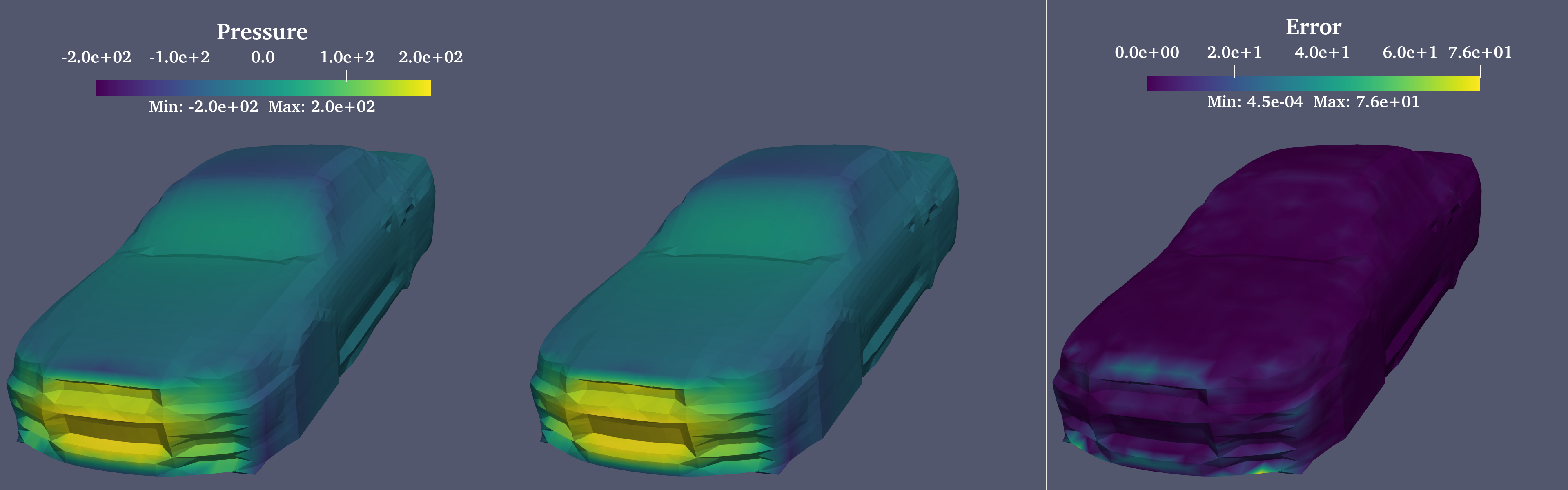}
    \includegraphics[width=0.9\linewidth]{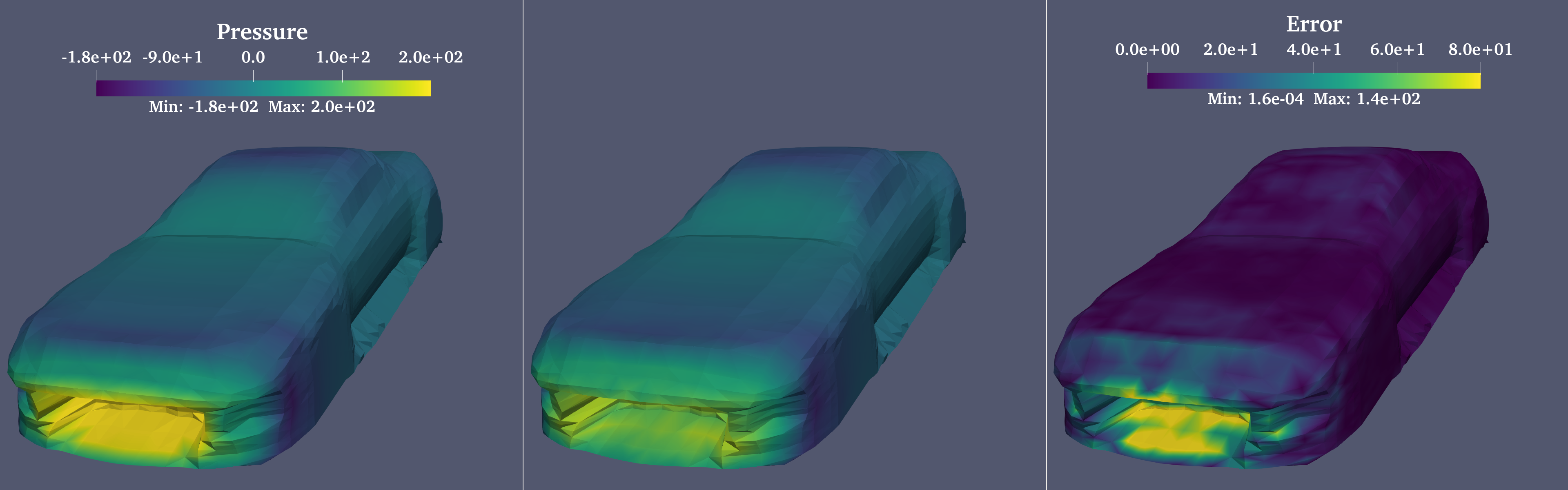}
    \caption{ShapeNet car test: From left to right, the panels display the reference surface pressure load $p(x)$ (Pa), the predicted surface pressure load, and the error (defined as the absolute value of difference between the reference and predicted solutions),  for the test cases with the median relative $L_2$ error (top) and the largest relative $L_2$ error (bottom).}
    \label{fig:car_shapenet_pred}
\end{figure}

\begin{figure}
    \centering
    \includegraphics[width=0.98\linewidth]{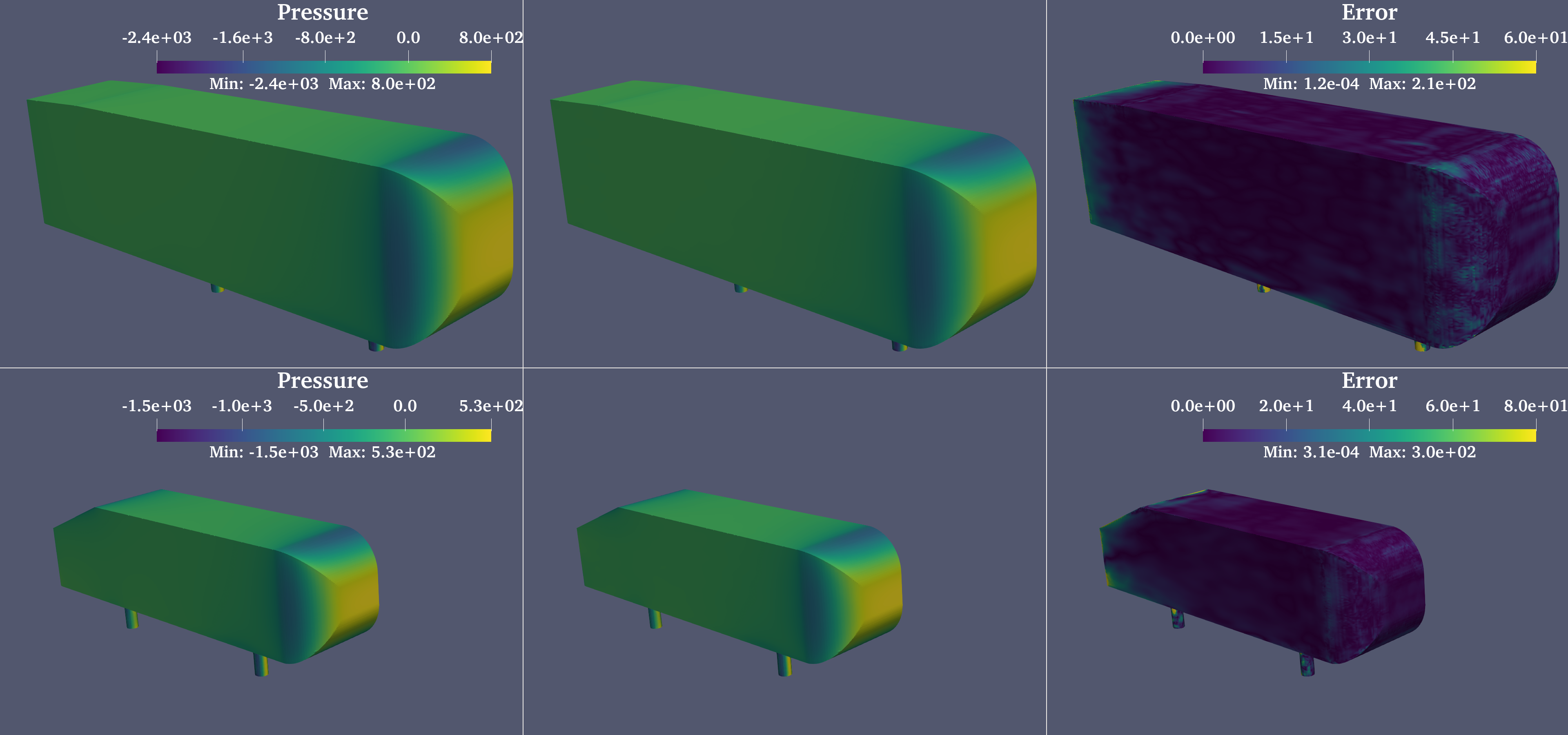}
    \caption{Ahmed body test: From left to right, the panels display the reference surface pressure load $p(x)$ (Pa), the predicted surface pressure load, and the error (defined as the absolute value of difference between the reference and predicted solutions),  for the test cases with the median relative $L_2$ error (top) and the largest relative $L_2$ error (bottom). }
    \label{fig:ahmedbody_pred}
\end{figure}

\subsection{Parachute Dynamics}
\label{ssec:parachute}
In this subsection, we consider predicting the inflation process of various parachutes under specified pressure loads. The dataset is generated by considering three types of parachutes: the disk-gap-band (DGB) parachute, a supersonic decelerator designed with a disk, gap, and band (See \cref{fig:DGB_loss}); the ringsail parachute, characterized by concentric, partially vented panels (resembling "rings") for high-performance deceleration (See \cref{fig:Arc_loss}); and the ribbon parachute, featuring longitudinal ribbons separated by reinforcement lines to enhance stability during supersonic deceleration (See \cref{fig:Robbin_loss}). 
For each type, 2,000 parachutes are generated by varying parameters such as the number of gores, diameter, gap size, and other specifications, as detailed in \cref{tab:parachute}.
The parachute inflates from its design shape under a pressure load applied to the canopy. The governing equations of the dynamic equilibrium for the structure are written in the Lagrangian formulation as
\begin{equation}
\label{eq:struct_equations}
     \rho \ddot{\bm{u}}- \nabla_{X}\cdot(\bm{F}\bm{S}) = \bm{f}_{ext}, \quad \textrm{in} \quad \Omega,
\end{equation}
where $\Omega$ represents the initial configuration of the parachute, described using material coordinates $X$. Here, $\rho$ is the density of the material, and $\bm{f}_{\text{ext}}$ denotes the external force vector acting on the parachute canopy due to the pressure load. The deformation gradient tensor is denoted by $\bm{F}$, and $\bm{S}$ represents the second Piola-Kirchhoff stress tensor.
Typically, the stress tensor is defined through a constitutive relation that relates it to the symmetric Green strain tensor:
\begin{equation*}
\bm{E} = \frac{1}{2}(\bm{F}^T \bm{F} - \bm{I}),
\end{equation*}
where $\bm{I}$ is the identity matrix.
The pressure load increases linearly from $0$ to $1000$~Pa over the first $0.1$ second and remains constant at $1000$~Pa thereafter. The parachute fabric, as well as the suspension and reinforcement line materials, are assumed to be linear elastic, with material properties consistent with those in \cite{huang2020modeling}.

\begin{table}
\centering
\begin{tabular}{|c|c|c|}
\hline
\multicolumn{2}{|c|}{Design variable} & \multicolumn{1}{c|}{Range} \\
\hline
\multicolumn{2}{|c|}{Number of gores} & \multicolumn{1}{c|}{$16, 20, \cdots, 36, 40$} \\

\multicolumn{2}{|c|}{Nominal diameter} & \multicolumn{1}{c|}{$5m \sim 9m$} \\

\multicolumn{2}{|c|}{Vent diameter} & \multicolumn{1}{c|}{$0.5m \sim 1m$} \\

\multicolumn{2}{|c|}{Suspension line length} & \multicolumn{1}{c|}{$10 m$} \\
\cline{1-3}
\multirow{2}{*}{DGB parachute} &  Canopy height & $1m \sim 2m$     \\  
&  Gap ratio & $20\%\sim30\%$     \\  
\cline{1-3}
\multirow{5}{*}{Ringsail parachute} &  Canopy height & $1m \sim 2m$     \\   
&  First ring bottom position  & $25\% \sim 30\%$       \\  
&  Second ring top position & $35\% \sim 40\%$     \\  
&  Second ring bottom position & $60\% \sim 65\%$   \\  
&  Third ring top position & $75\% \sim 80\%$  \\  
\cline{1-3}
\multirow{3}{*}{Ribbon parachute} &  Canopy height & $0m \sim 2m$     \\  
&  First reinforcement line position & $20\% \sim 40\%$       \\  
&  Second reinforcement line position  & $60\% \sim 80\%$     \\  
\hline
\end{tabular}
\caption{Parachute dynamics: Geometric parameters for each parachute design. The DGB parachute has a cylindrical design consisting of a disk, gap, and band. The ringsail parachute features a circular arc design composed of three ring structures, with the first ring attached to the vent. The top and bottom positions of these rings are parameterized by their relative distance from the vent's edge. The ribbon parachute is designed as a cone, incorporating two embedded longitudinal reinforcement lines. The suspension line has a fixed length of $10m$, connecting the fixed bottom point to the parachute canopy.}
\label{tab:parachute}
\end{table}

We generate the dataset by solving the structural governing equation \eqref{eq:struct_equations}, using AERO-Suite~\cite{farhat1998load,wang2011algorithms,farhat2010robust,huang2018family,borker2019mesh,michopoulos2024bottom} with the finite element method.
The vertical suspension lines and longitudinal reinforcement lines are modeled as one-dimensional beam elements, while the parachute canopy is modeled as two-dimensional shell elements.  
A uniform mesh size of approximately $0.1m$ is used, resulting in each parachute comprising about 6,000 mesh points.
Our objective is to learn the mapping from the initial design shape $\Omega$ to the parachute displacement fields at four specific time points during inflation: $t = 0.04$, $t = 0.08$, $t = 0.12$, and $t = 0.16$. These configurations capture the inflation process, where the parachute first inflates rapidly under pressure load, then overinflates, and finally bounces back. To accelerate online prediction and improve stability, we directly predict the solutions at these four moments, leaving the exploration of the time-dependent operator for future work.  The operator is defined  as follows:
\begin{equation*}
    \G^{\dagger} : (\mathds{1}_{\rm line}, \Omega) \mapsto (\bm{u}_{0.04}, \bm{u}_{0.08}, \bm{u}_{0.12}, \bm{u}_{0.16}), 
\end{equation*}
where $\mathds{1}_{\rm line}$ is an indicator function specifying whether a point lies on a one-dimensional element (i.e., suspension lines or longitudinal reinforcement lines). 

Next, we discuss the setup and training process for the PCNO model.
Since the parachute geometry $\Omega$ consists of components with different dimensionalities,  we define the density function using two integral domains. The one-dimensional domain, $\Omega_1$, includes the suspension lines and longitudinal reinforcement lines, while the two-dimensional domain, $\Omega_2$, represents the parachute fabric. These domains overlap in regions where the suspension and reinforcement lines are sewn to the canopy fabric along their alignment.
The integral operator in PCNO is expressed as
\begin{equation}
\begin{split}
\label{eq:integral-operator-rho2}
f_{\rm out}(x) =  \int_{\Omega_1} \kappa\bigl(x, y, f_{\rm in}\bigr) W_1^{v} f_{\rm in}(y) \rho_1(y; \Omega_1)  dy \
 + \int_{\Omega_2} \kappa\bigl(x, y, f_{\rm in}\bigr) W_2^{v} f_{\rm in}(y) \rho_2(y; \Omega_2)  dy,
\end{split}
\end{equation}
where $\rho_1(y; \Omega_1)$ and $\rho_2(y; \Omega_2)$ represent the distributions on $\Omega_1$ and $\Omega_2$, respectively. 
We use uniform distributions for both domains, defined as $\rho_1(y; \Omega_1) = \frac{1}{2|\Omega_1|}$ and $\rho_2(y; \Omega_2) = \frac{1}{2|\Omega_2|}$, ensuring that:
$$\int_{\Omega_1} \rho_1(y; \Omega_1) +  \int_{\Omega_2} \rho_2(y; \Omega_2) = 1.$$
The integral operator in \eqref{eq:integral-operator-rho2} combines contributions from both domains and employs Fourier kernels. Due to memory constraints, we retain 12 Fourier modes in each spatial direction for both domains, $\Omega_1$ and $\Omega_2$.
The PCNO model is trained on a dataset evenly distributed among three parachute types, with 333, 333, and 334 samples, respectively. The test dataset consists of 200 samples, also evenly distributed among the three parachute types.
The training process takes approximately 475.20 seconds per epoch.


The PCNO model achieves high accuracy, with a relative  $L_2$ test error of approximately $3.03\%$.
The relative training and test errors over iterations, as well as the distribution of test errors, are
visualized in \cref{fig:parachute loss compare}. 
During the early stages of training,  sudden and significant spikes in error are observed. This phenomenon may result from the large variations among different types of parachutes, with only around 300 data points for each, combined with numerical oscillations caused by a slightly high learning rate. After approximately 300 iterations, both the training and test errors begin to decline steadily, ultimately achieving high accuracy on the test dataset. 
The test error distributions indicate that ribbon parachutes exhibit slightly larger errors compared to the other two types. This can be attributed to the ribbon parachute's low porosity and relatively flat structure, 
 which tend to undergo larger displacements under pressure loading. These displacements are further visualized in \cref{fig:Robbin_loss}.
Furthermore, \cref{fig:DGB_loss,fig:Arc_loss,fig:Robbin_loss} visualize the PCNO model's performance across the three parachute types, highlighting test cases with median and largest relative $L_2$ errors, respectively. In these visualizations, the shape of the parachute represents the predicted solution during inflation, while the color intensity reflects the magnitude of the error between the actual and predicted displacement. 
Notably, the ribbon parachute in \cref{fig:Robbin_loss} exhibits the largest error, primarily due to its large motion at $t=0.08$ and $t=0.12$, which acts as an outlier. 
For most test cases, the PCNO model accurately captures the dynamic inflation patterns,  suspension line motion, and fabric folds of the parachute.
The pointwise displacement error is approximately $0.01m$, while the overall parachute length is around $10 m$ and the mesh resolution is around $0.1m$. This demonstrates the PCNO model's strong performance in maintaining low relative errors across the test cases. 
In terms of acceleration, the model preprocesses and predicts a single instance in approximately 1.15 and 0.159 seconds, respectively, compared to about one hour using traditional CPU-based solvers with parallelization.
These results highlight the potential of the PCNO model for addressing challenging real-world engineering problems involving complex geometries.

\begin{figure}
    \centering
    \includegraphics[width=0.39\linewidth]{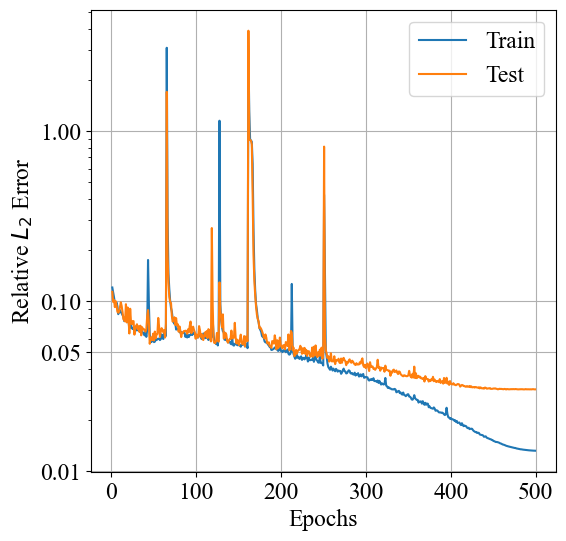}~~~~
    \includegraphics[width=0.355\linewidth]{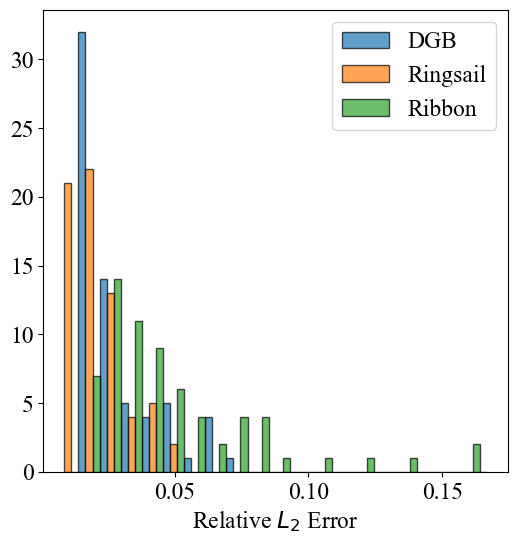}
    \caption{Parachute dynamics: The relative training and test errors over epochs (left) and the distribution of the test errors (right), visualized separately for the DGB, ringsail, and ribbon parachutes.}
    \label{fig:parachute loss compare}
\end{figure}

\begin{figure}
    \centering
    \includegraphics[width=0.98\linewidth]{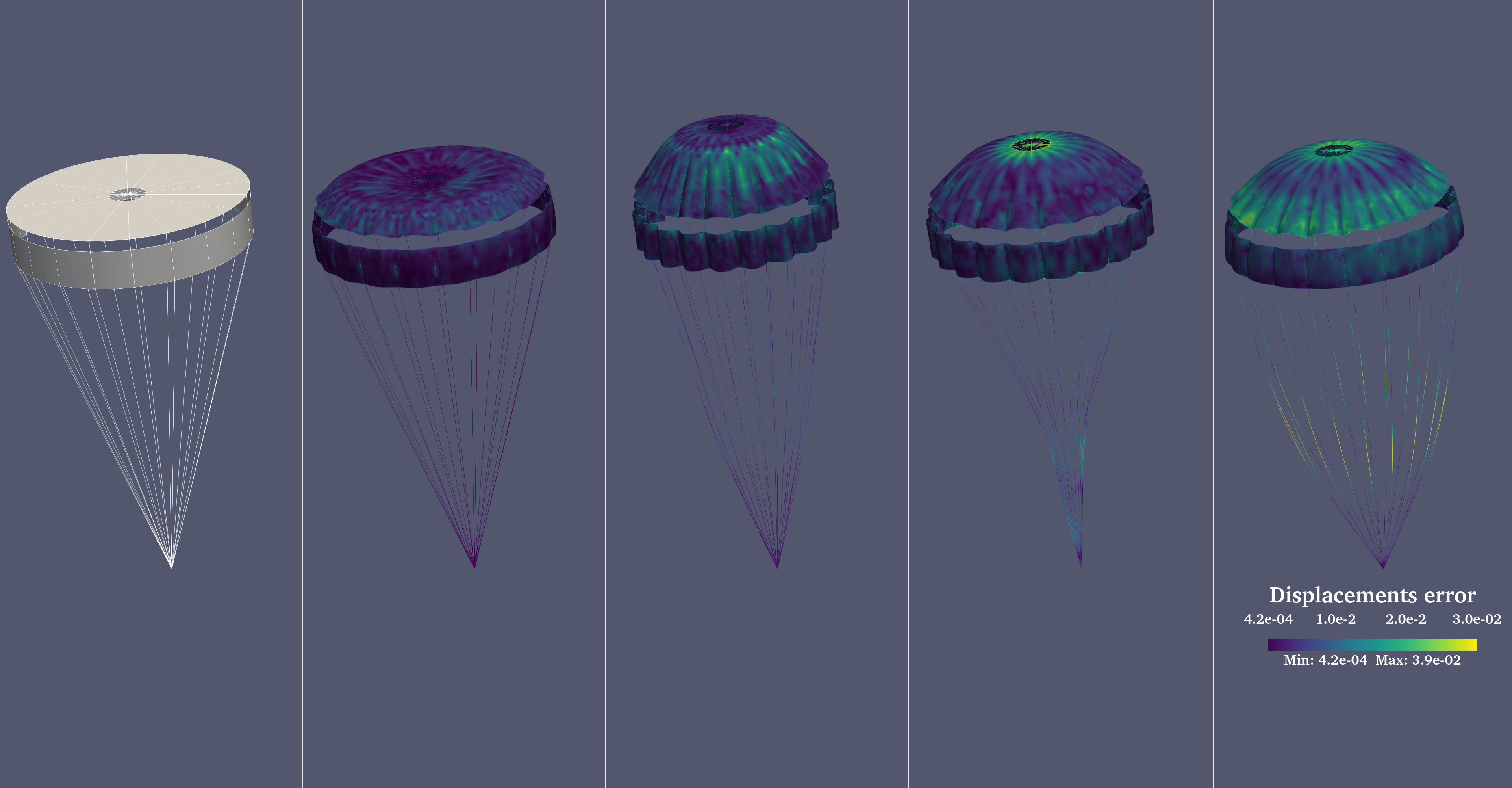}
    \includegraphics[width=0.98\linewidth]{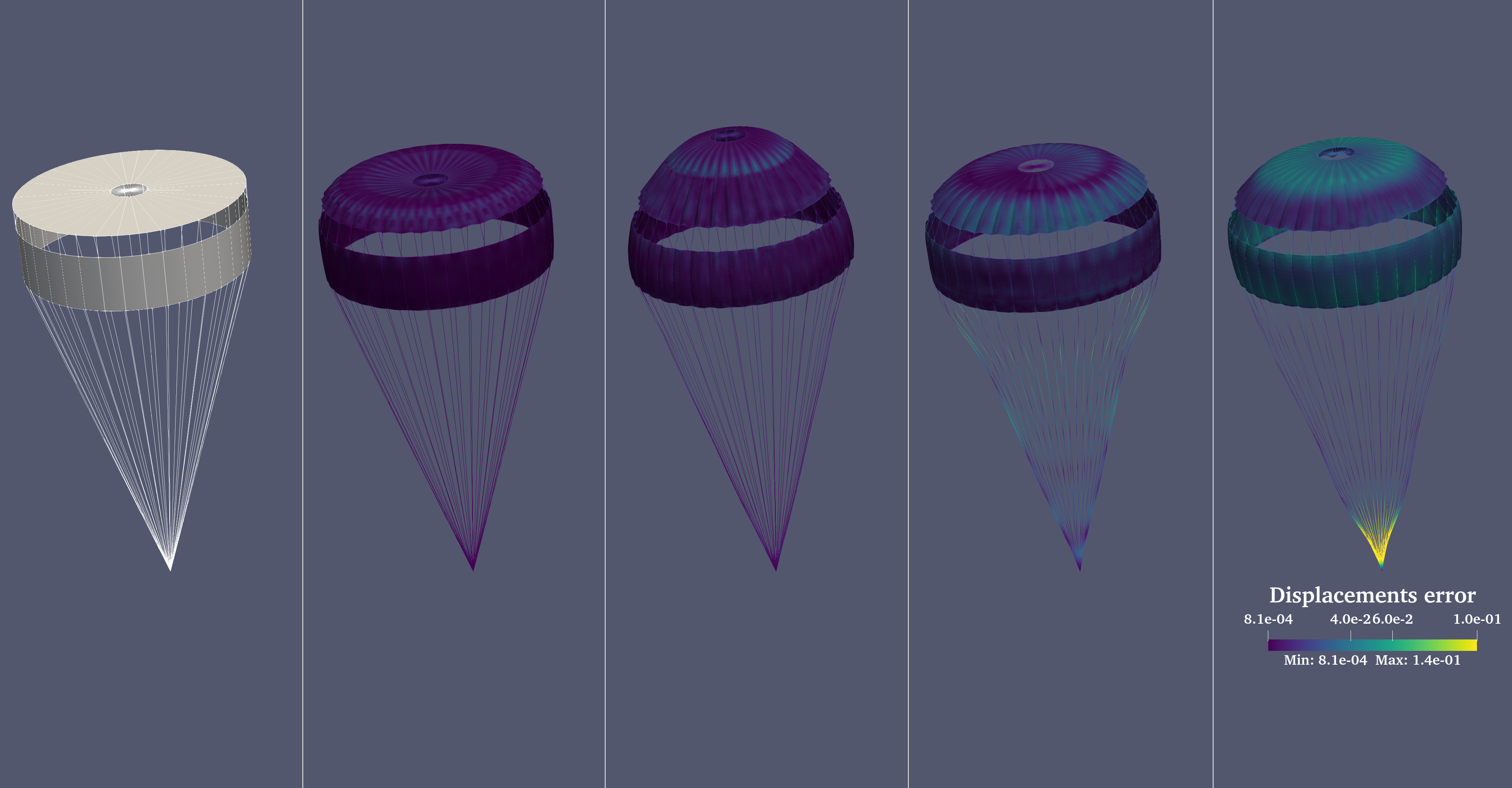}
    \caption{Parachute Dynamics: 
    From left to right, the panels display the design shape, and the inflation shape at $t=0.04$, $t=0.08$, $t=0.12$, and $t=0.16$ for the DGB parachute with the median relative $L_2$ error (top) and the largest relative $L_2$ error (bottom).
    The parachute shape represents the predicted solution during inflation, while the color intensity reflects the magnitude of the error between the reference and predicted displacement (m).  }
    \label{fig:DGB_loss}
\end{figure}

\begin{figure}
    \centering
    \includegraphics[width=0.98\linewidth]{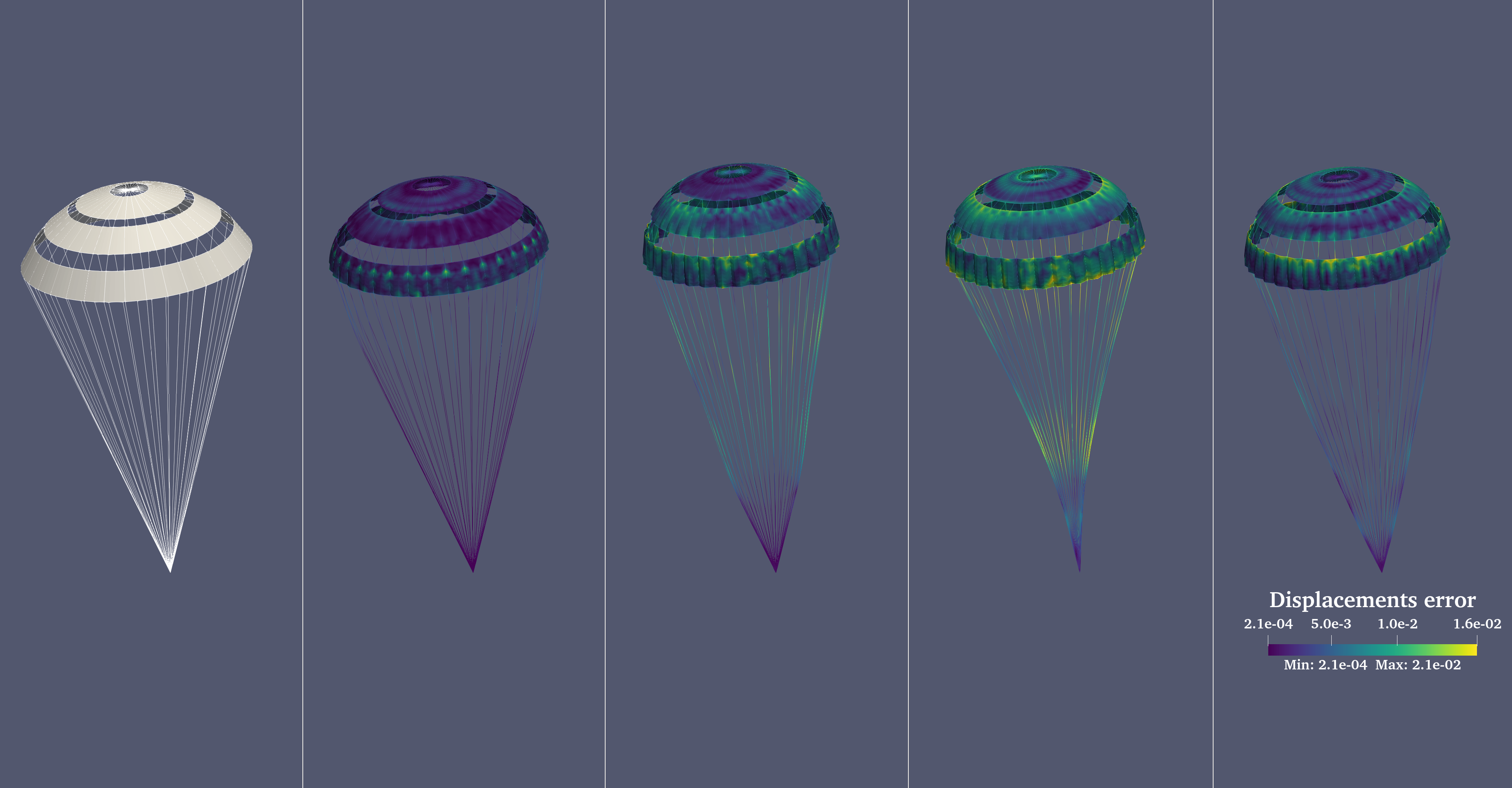}
    \includegraphics[width=0.98\linewidth]{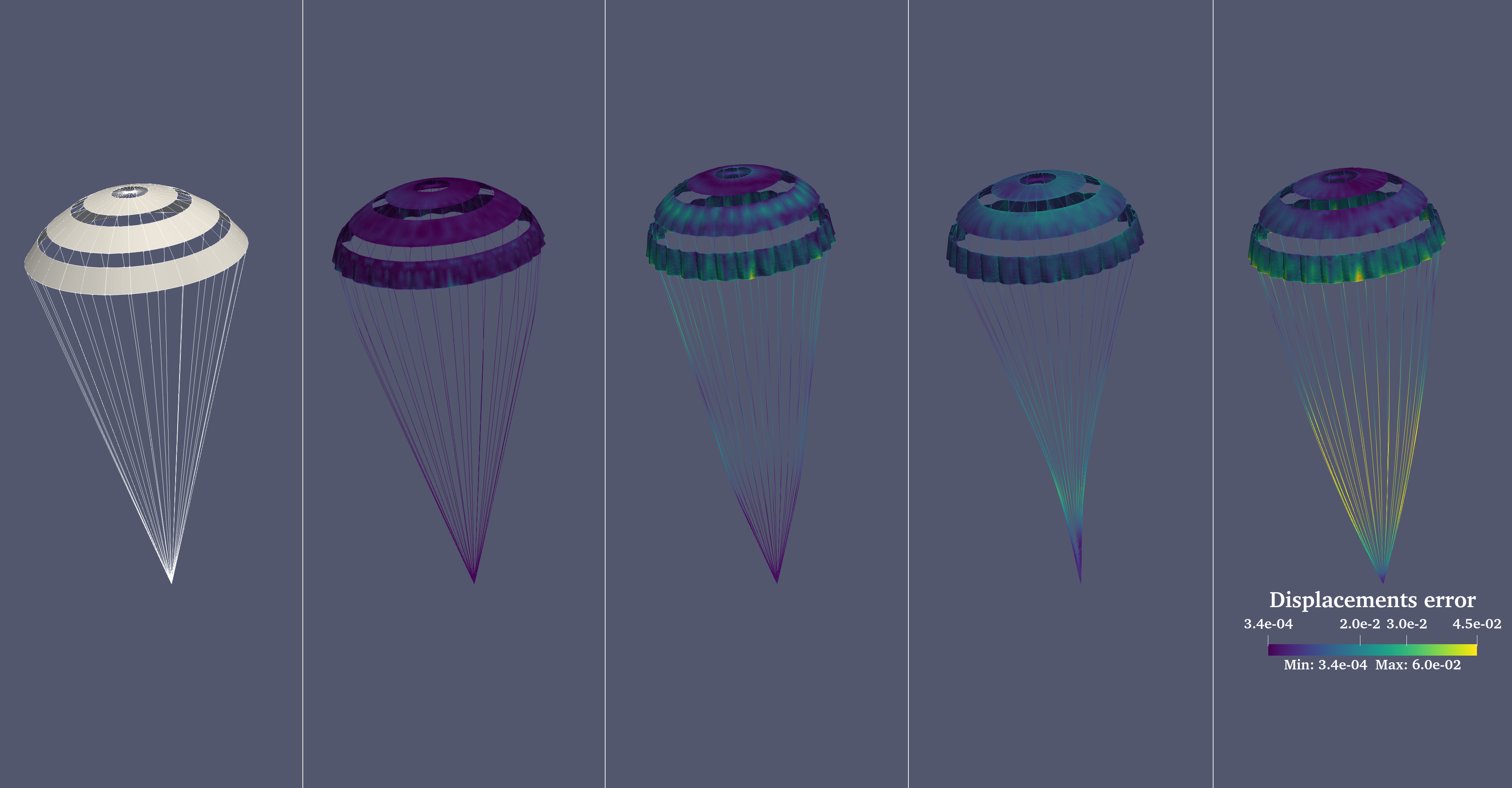}
    \caption{Parachute Dynamics: 
    From left to right, the panels display the design shape, and the inflation shape at $t=0.04$, $t=0.08$, $t=0.12$, and $t=0.16$ for the ringsail parachute with the median relative $L_2$ error (top) and the largest relative $L_2$ error (bottom).
    The parachute shape represents the predicted solution during inflation, while the color intensity reflects the magnitude of the error between the reference and predicted displacement (m). }
    \label{fig:Arc_loss}
\end{figure}

\begin{figure}
    \centering
    \includegraphics[width=0.98\linewidth]{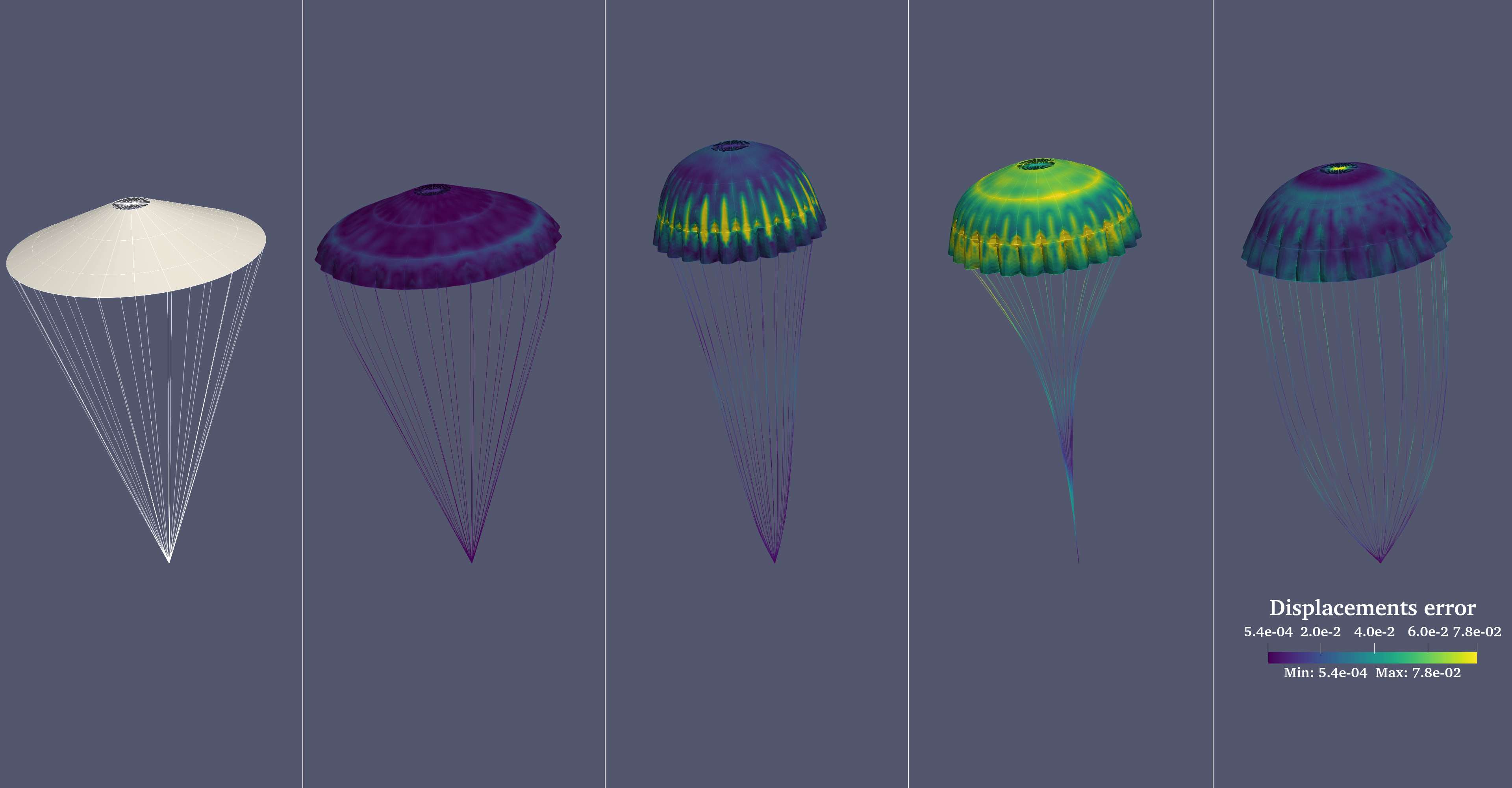}
    \includegraphics[width=0.98\linewidth]{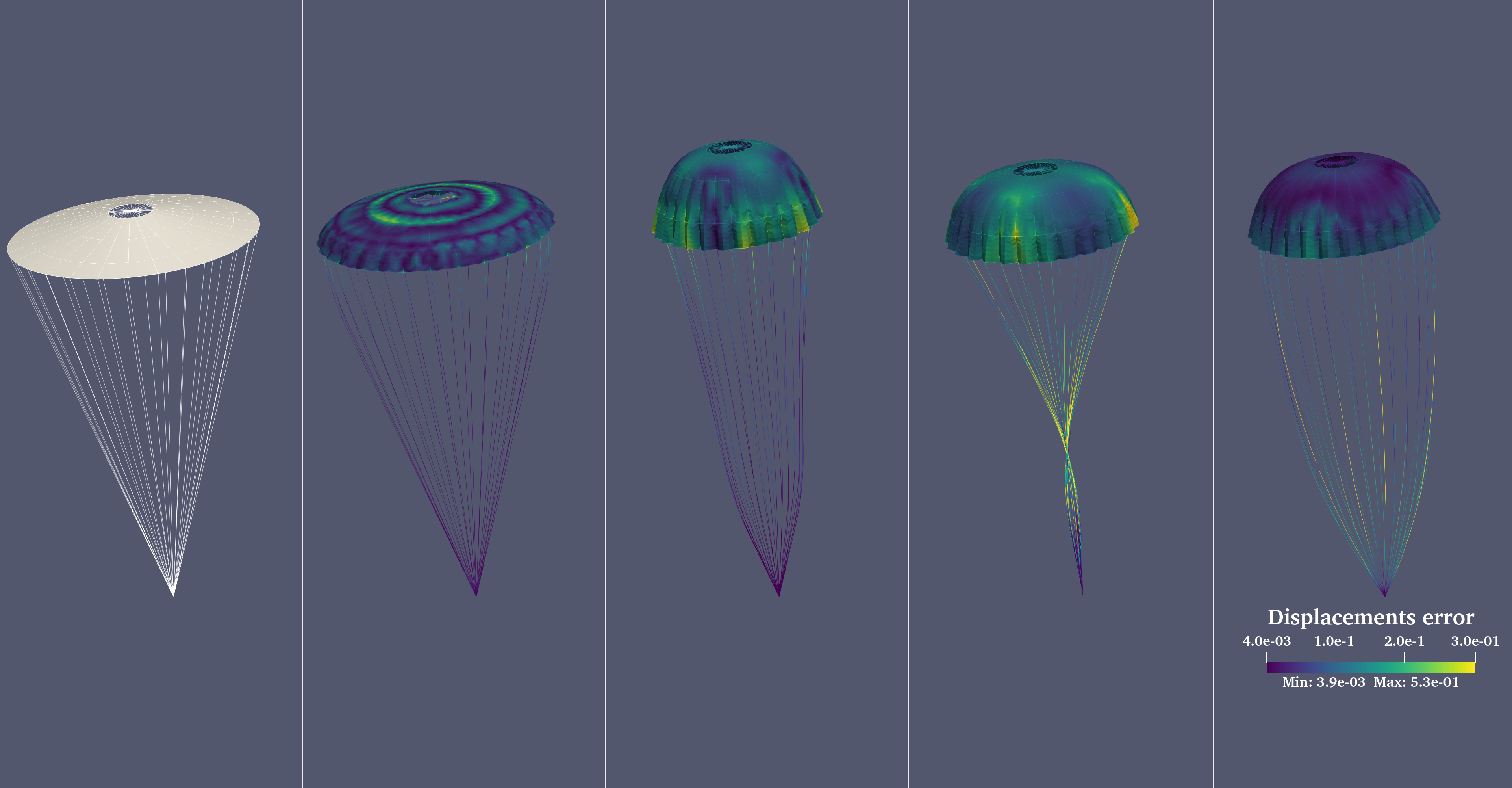}
    \caption{Parachute Dynamics: 
    From left to right, the panels display the design shape, and the inflation shape at $t=0.04$, $t=0.08$, $t=0.12$, and $t=0.16$ for the ribbon parachute with the median relative $L_2$ error (top) and the largest relative $L_2$ error (bottom).
    The parachute shape represents the predicted solution during inflation, while the color intensity reflects the magnitude of the error between the reference and predicted displacement (m). }
    \label{fig:Robbin_loss}
\end{figure}

\section{Conclusion}
\label{sec:conclusion}

In this paper, we introduce the Point Cloud Neural Operator (PCNO), a neural network-based surrogate model framework for approximating solution maps of parametric PDEs on complex and variable geometries. By designing the operator at the continuous level, and implementing it with traditional numerical methods, PCNO effectively handles point clouds and mitigates the impacts of discretization.  Its flexibility, effectiveness, and practicality are demonstrated through five PDE problems, including large-scale three-dimensional applications.
Several promising directions for future research remain. 
On the algorithmic side, systematically extending PCNO to time-dependent PDE systems would significantly broaden its applicability to more complex real-world scenarios. Additionally, developing a unified geometric foundation model that incorporates diverse data types could enhance its generalization capabilities. 
Improving the generalization capability from PDE solutions on simple geometries to complex geometry represents another promising research direction.
Furthermore, exploring PCNO’s potential for engineering design optimization, uncertainty quantification, and related applications is another valuable avenue.  
On the theoretical side, a rigorous analysis of sample complexity and the optimization process is essential to provide deeper insights into PCNO’s efficiency in practical applications.

\section*{Acknowledgments} We acknowledge the support of the National Natural Science Foundation of China through grant 12471403, the Fundamental Research Funds for the Central Universities, and the high-performance computing platform of Peking University. Yanshu Zhang also acknowledges the support of the elite undergraduate training program of School of Mathematical Sciences at Peking University. 

\appendix
\section{Mesh Size and Connectivity Computation}
\label{sec:mesh}
In this section, we provide details on computing mesh size $\dd \Omega_i$ and connectivity used in the integral and differential operators (See \cref{ssec:integral-op,ssec:differential-op}). 
For most PDE-related problems, the point cloud is typically associated with a mesh, from which the mesh size $\dd \Omega_i$ and connectivity can be estimated. 
For vertex-centered meshes, the quantities of interest are stored at each vertex, denoted as $\{x^{(i)}\}_{i=1}^{N}$.  
The mesh size at node $i$ is computed as:
\begin{equation}
    \dd\Omega_i = \sum_{j : e_j \textrm{ contains node } i} \frac{|e_j|}{n_{e_j}},
\end{equation}
where 
$e_j$ represents a cell containing node $i$,
$|e_j|$ is the volume of the cell, and  $n_{e_j}$  is the number of nodes in the cell. The volume of each cell is distributed equally among its nodes.
The connectivity of node $i$, defined as the set of its neighboring nodes, is given by:
\begin{equation}
    \cN(i) = \{ j: i,j \textrm{ belong to the same cell}\}.
\end{equation}
In other words, 
$\cN(i)$ consists of all nodes that share at least one cell with node $i$. 
For cell-centered meshes, the quantities of interest are stored at each cell. 
The mesh size at node $i$ is computed as the volume of the cell, it represents. 
The connectivity of node $i$, defined as the set of neighboring cells is given by:
\begin{equation}
    \cN(i) = \{ j:  \textrm{ cells } i,j \textrm{ share at least one node}\}.
\end{equation}
In other words, 
$\cN(i)$ includes all cells that share at least one node with cell $i$. 
When a mesh is unavailable, a Delaunay tessellation can be constructed from the point cloud, treating the resulting as a vertex-centered mesh.

\section{Proof of \cref{prop:uap}}
\label{sec:proof:uap}
In this section, we provide the proof of \cref{prop:uap}. First, we introduce two useful lemmas.
\begin{lemma}[Proposition A.8 in \cite{lanthaler2023nonlocal}]
\label{lem:expansion}
    Let $\G_B^\dagger: L_p(B, \R^{d_a+d+1})\to L_p(B,\R^{d_u})$ be a continuous operator. Let $\cK_B\subset L_p(B, \R^{d_a+d+1})$ be a compact set. Then for any $\epsilon>0$, there exist $\eta_1,\cdots,\eta_J\in L_p(B,\R^{d_u})$ and continuous functionals $\alpha_1,\cdots,\alpha_J: L_1(B,\R^{d_a+d+1})\to R$, such that
    \begin{equation}
        \sup_{\tilde{a}_B\in \cK_B} \left\|\G_B^\dagger (\tilde{a}_B) - \sum_{j=1}^J \alpha_j(\tilde{a}_B) \eta_j\right\|_{L_p(B,\R^{d_u})} \leq \epsilon.
    \end{equation}
\end{lemma}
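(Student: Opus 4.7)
The plan is to combine \cref{lem:expansion} with standard MLP universal approximation, exploiting the fact that the Fourier integral operator in the PCNO layer \eqref{eq:pcno-layer} can extract Fourier-like features of the input. First, I would apply \cref{lem:expansion} with tolerance $\epsilon/2$ to obtain continuous functionals $\alpha_1, \ldots, \alpha_J$ on $L_1(B, \R^{d_a+d+1})$ and functions $\eta_1, \ldots, \eta_J \in L_p(B, \R^{d_u})$ such that $\sum_j \alpha_j(\tilde{a}_B)\,\eta_j$ approximates $\G_B^\dagger(\tilde{a}_B)$ within $\epsilon/2$ in $L_p(B)$, and hence within $\epsilon/2$ in $L_p(\Omega)$ by restriction. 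The task then reduces to approximating the finite-rank operator $\tilde{a}_B \mapsto \sum_j \alpha_j(\tilde{a}_B)\,\eta_j\bigl|_\Omega$ uniformly on $\cK_B$ by a PCNO acting on the $\Omega$-restricted input $\tilde{a}$.

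Next, fix $L > 0$ with $B \subset [-L/2, L/2]^d$ and use the Fourier polynomials $\phi_k(x) = e^{2\pi i k \cdot x /L}$ as the common basis. Define the feature map $T_M(\tilde{a}_B) := \bigl(\hat{c}_k^i\bigr)_{|k|_\infty \leq K,\, i}$ with $\hat{c}_k^i := \int_B \overline{\phi_k(y)}\,\tilde{a}_B^i(y)\,dy$. Since Fourier polynomials are dense in $L_p$ on the torus and $\alpha_j$ is continuous on the compact set $\cK_B$ (which is $L_\infty$-bounded), for $M$ large enough there exist continuous $\psi_j : \R^{2M} \to \R$ (with real and imaginary parts separated) with $|\alpha_j(\tilde{a}_B) - \psi_j(T_M(\tilde{a}_B))| \leq \epsilon'$ uniformly on $\cK_B$. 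Similarly, each $\eta_j$ is approximated in $L_p(B)$ by a truncated expansion $\tilde{\eta}_j(x) = \sum_{|k|_\infty \leq K'} \hat{\eta}_{j,k}\,\phi_k(x)$. The target to be realized by the PCNO is therefore the continuous map $\tilde{a}_B \mapsto \sum_j \psi_j(T_M(\tilde{a}_B))\,\tilde{\eta}_j$ restricted to $\Omega$.

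I would then assemble $\G_\theta$ in four stages. \emph{(i)} A lifting MLP, realized by stacking PCNO layers with zero integral and gradient coefficients, computes for each required pair $(k,i)$ the channel $g_k^i(x) := \overline{\phi_k(x)}\,\tilde{a}^i(x)/\rho(x;\Omega)$; this is a pointwise map of $\tilde{a}(x) = [a(x), x, \rho(x;\Omega)]$, and the division by $\rho$ is bounded and Lipschitz thanks to $\rho \geq \rho_{\inf} > 0$. \emph{(ii)} A single Fourier integral layer, with only the zero mode $k_{\rm F}=0$ active and $W_0^v$ set to a selector matrix, produces the spatially constant channel $\int_\Omega g_k^i(y)\,\rho(y;\Omega)\,dy = \int_\Omega \overline{\phi_k(y)}\,\tilde{a}^i(y)\,dy = \hat{c}_k^i(\tilde{a}_B)$. \emph{(iii)} A further pointwise MLP, with access both to the constant channels $\hat{c}$ and to $x$ (carried through via the skip $W^l$), approximates the continuous map
\begin{equation*}
(\hat{c}, x) \mapsto \sum_j \psi_j(\hat{c})\,\tilde{\eta}_j(x) = \sum_{|k|_\infty \leq K'} \Bigl(\sum_j \psi_j(\hat{c})\,\hat{\eta}_{j,k}\Bigr)\,\phi_k(x),
\end{equation*}
which is continuous on the compact set $T_M(\cK_B) \times B$ and hence uniformly approximable by MLPs. \emph{(iv)} The projection layer delivers the final $d_u$-channel output. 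Summing errors from the lemma, the Fourier/feature truncation, and MLP approximation via the triangle inequality yields \eqref{eq:prop:uap}.

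The main obstacle is stage \emph{(ii)}: because the PCNO integral is intrinsically weighted by $\rho(y;\Omega)$, which depends on the varying domain, one cannot directly recover the unweighted coefficients $\hat{c}_k^i$ used to build the $\alpha_j$'s. The resolution is exactly the pointwise division by $\rho$ in the lifting MLP, which is well-defined and uniformly bounded only because of the hypothesis $\rho_{\inf} > 0$; this cancels the residual $\rho$ factor in the Fourier integral. A secondary subtlety is that the functions $\eta_j$ are produced by the lemma as elements of $L_p(B)$ with no a-priori regularity on $\Omega$; however, restricting truncated Fourier polynomials on $B$ to $\Omega$ remains dense in $L_p(\Omega)$, and the restriction operator has norm at most $1$, which suffices for the error bookkeeping.
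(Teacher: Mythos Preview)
Your proposal does not prove the stated lemma. \Cref{lem:expansion} asserts that any continuous operator $\G_B^\dagger$ on a compact set $\cK_B$ admits a uniform finite-rank approximation $\sum_j \alpha_j(\cdot)\,\eta_j$; you never establish this. Instead, you \emph{invoke} \cref{lem:expansion} as a black box in your very first sentence and then spend the remainder constructing a PCNO that realizes the resulting finite-rank operator. What you have written is a proof of \cref{prop:uap} (the universal approximation theorem), not of \cref{lem:expansion}; the closing reference to \eqref{eq:prop:uap} confirms this.

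The paper itself does not supply a proof of \cref{lem:expansion} either: it is quoted verbatim as Proposition~A.8 of \cite{lanthaler2023nonlocal} and used without argument. A self-contained proof would proceed along entirely different lines from yours---for instance, by observing that $\G_B^\dagger(\cK_B)$ is compact in $L_p(B,\R^{d_u})$, covering it with finitely many $\epsilon$-balls centered at $\eta_1,\dots,\eta_J$, and building the $\alpha_j$ as a continuous partition of unity subordinate to this cover, pulled back through $\G_B^\dagger$. None of the Fourier-feature extraction, division by $\rho$, or MLP staging in your proposal is relevant to that task.

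As an aside, if your write-up is read as a proof of \cref{prop:uap}, it is structurally close to the paper's own argument in \ref{sec:proof:uap}: both use \cref{lem:expansion} to reduce to approximating each functional $\alpha_j$, exploit the lower bound $\rho\geq\rho_{\inf}$ to divide out the density in a lifting layer so that the constant-kernel integral recovers an unweighted inner product, and then approximate $\eta_j$ and the final product via pointwise MLPs. The paper isolates the functional-approximation step as a separate \cref{lem:alpha-approximation}, whereas you fold it into a single four-stage construction.
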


\begin{lemma}[Modified Lemma A.9 in \cite{lanthaler2023nonlocal}]
\label{lem:alpha-approximation}
    Let $\alpha:L_1(B,\R^{d_a+d+1})\to R$ be a continuous nonlinear functional. Consider $\cK_B\subset L_1(B, \R^{d_a+d+1})$ as a compact set consisting of bounded functions, satisfying $\sup_{\tilde{a}_B \in \cK_B}\lVert \tilde{a}_B \rVert_{L_\infty} < \infty$. There exists a Point Cloud Neural Operator $\alpha_{\theta}: L_1(B, \R^{d_a+d+1}) \rightarrow L_1(B)$, whose output functions are constant and can therefore also be interpreted as a functional $\alpha_{\theta}: L_1(B, \R^{d_a+d+1}) \rightarrow \R$, such that 
    $$\sup_{\tilde{a}_B \in \cK_B}| \alpha(\tilde{a}_B) - \alpha_{\theta}(\tilde{a}_B) | \leq \epsilon,$$ 
    for any given $\epsilon$.
\end{lemma}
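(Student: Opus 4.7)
The plan is to realize $\alpha_{\theta}$ inside the PCNO architecture via an encoder-processor-decoder structure: an encoder that extracts a finite set of Fourier-type features of $\tilde{a}_B$ using PCNO's Fourier integral operators, a processor (stacked PCNO layers with integral and gradient terms switched off) that pointwise approximates a continuous function of these features, and a final layer that produces a constant output on $\Omega$, which is then extended to a constant function on $B$.

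First, I will reduce to a finite-dimensional approximation. Since $\cK_B$ is compact in $L_1(B, \R^{d_a+d+1})$ with uniformly $L_\infty$-bounded elements, a Stone-Weierstrass/Fej\'er type argument produces finitely many continuous test functions $\phi_1, \ldots, \phi_N$ on $B$ (for concreteness, trigonometric polynomials on a fundamental box containing $B$) such that the linear map $\Phi: \tilde{a}_B \mapsto \bigl(\int_B \phi_j\,\tilde{a}_B\,\dd y\bigr)_{j=1}^N \in \R^N$ makes $\alpha$ factor, up to $\epsilon/3$ in sup norm over $\cK_B$, as $\alpha \approx g \circ \Phi$ for some continuous $g: \R^N \to \R$. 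The uniform $L_\infty$ bound confines $\Phi(\cK_B)$ to a compact set $K \subset \R^N$, so by the MLP universal approximation theorem there exists an MLP $\hat g$ with $\sup_{K} |g - \hat g| \le \epsilon/3$.

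Next, I will realize both $\Phi$ and $\hat g$ inside a PCNO. The key observation is that setting $W_k^v = 0$ for $k \neq 0$ and $W^g = 0$ collapses a PCNO layer to the pointwise form $\sigma(W^l f_{\rm in}(x) + b)$, so stacked PCNO layers implement arbitrarily deep pointwise MLPs. The input channels $[a(y), y\mathds{1}_{\Omega}(y), \rho(y; \Omega)]$ give access to each component of $\tilde{a}(y)$ and to the coordinate $y$; since $\rho \ge \rho_{\inf} > 0$, the division $v/\rho$ is continuous and bounded on the relevant range. Hence a stacked pointwise MLP can uniformly approximate each preprocessed channel $y \mapsto \phi_j(y)\,\tilde{a}^{(i)}(y)/\rho(y;\Omega)$. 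A subsequent PCNO layer with only the $k=0$ integral mode active then computes $\int_\Omega \phi_j(y)\,\tilde{a}^{(i)}(y)\,\dd y$, producing the required feature as a channel that is constant in $x$. Another stack of pointwise-MLP layers followed by the projection layer realizes $\hat g$ evaluated on these feature channels, yielding a constant output on $\Omega$ whose value approximates $\alpha(\tilde{a}_B)$ within $\epsilon/3$.

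The main obstacle will be that a single integral operator in a PCNO layer with nonzero mode $k$ outputs a function with residual $x$-dependence $e^{2\pi i k \cdot x/L}$; features of $\tilde{a}_B$ cannot simply be read off as constants from a single integral evaluation with an oscillatory kernel. The fix is the two-stage decomposition above: push the modulation by $\phi_j(y)$ into the pointwise MLP that has access to $y$ via the coordinate channel, and then invoke the $k=0$ mode of the next integral operator to integrate $y$ out against the density weight $\rho(y;\Omega)$, which cancels the pre-division by $\rho$. A secondary technicality is that the activation $\sigma$ is hard-coded inside every PCNO layer; this is absorbed into the MLP construction by placing inputs in a near-linear region of $\sigma$ via weight scaling, or by treating each $\sigma(W f_{\rm in}+b)$ as a single building block of the approximating MLP. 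Combining the three $\epsilon/3$ errors yields the stated bound.
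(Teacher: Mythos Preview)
Your proposal is correct and follows essentially the same route as the paper: reduce $\alpha$ to a continuous function of finitely many linear features $\int_B \phi_j\,\tilde{a}_B$, realize each feature by a pointwise (lifting) network that forms $\phi_j(y)\,\tilde{a}(y)/\rho(y;\Omega)$ using the coordinate and density channels, then apply the $k=0$ Fourier integral so that the $\rho$-weight cancels and yields a constant-in-$x$ output, and finally approximate the outer function by a pointwise MLP in the projection layer. The only cosmetic differences are your choice of trigonometric test functions versus the paper's $L_2$ orthonormal basis $\{\xi_j\}$, and your more explicit handling of the hard-coded activation; neither affects the argument.
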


\begin{proof}
    Following the proof of Lemma A.9 in \cite{lanthaler2023nonlocal},  there exists an orthogonal basis, $\{\xi_i\}_{i=1}^\infty$, of $L_2(B,\R^{d_a+d+1})$,   which we may also assume to be in $C^\infty(B,\R^{d_a+d+1})$. Then we have uniform convergence
    \begin{equation}
    \label{eq:expansion-l2}
        \lim_{J \to \infty} \sup_{\tilde{a}_B\in \cK_B} \left\| \tilde{a}_B - \sum_{i=1}^J \langle \tilde{a}_B, \xi_j\rangle_{L_2} \xi_j \right\|_{L_1(B,\R^{d_a+d+1})} = 0 .
    \end{equation}
The claim relies on the fact that $B$ is bounded, $\cK_B \subset L_1 \cap L_{\infty} \subset L_2$, and $\cK_B \subset L_2$ is compact in $L_2(B,\R^{d_a+d+1})$. 
The approximation of the functional can be decomposed into two mappings
\begin{equation} 
\label{eq:two-mappings}
\tilde{a}_B \rightarrow \bigl(\langle \tilde{a}_B, \xi_1\rangle_{L_2}, \cdots, \langle \tilde{a}_B, \xi_J\rangle_{L_2}\bigr) \rightarrow \alpha\bigl(\sum_{i=1}^J \langle \tilde{a}_B, \xi_j\rangle_{L_2} \xi_j\bigr).
\end{equation}
The first mapping is continuous and, can be approximated by the proposed Point Cloud Neural Operator for any given $\epsilon$, such that 
\begin{equation} 
\label{dot-product-approximation}
\sup_{\tilde{a}_B  \in \cK_B}\Bigl|\langle \tilde{a}_B, \xi_j\rangle_{L_2} -   \int_{\Omega} \PP(\tilde{a}_B) \rho(x; \Omega) \dd x\Bigr|  =  \sup_{\tilde{a}_B \in \cK_B}\Bigl|\int_\Omega \tilde{a}_B \cdot \xi_j \dd x -   \int_{\Omega} \PP(\tilde{a}_B) \rho(x; \Omega) \dd x\Bigr| \leq \epsilon.
\end{equation}
Here $\PP$ represents the lift layer, consisting solely of local linear functions. The layer $\PP$ can be used to approximate the $L_p(B)$ function $\R^{d_a + d + 1} \rightarrow R$, $
\tilde{a}_B \rightarrow \frac{1}{\max\{\rho_B,\rho_{\inf}\}} \tilde{a}_B \cdot \xi_j(x) $ with error $\frac{\epsilon}{\sup_{\tilde{a}_B\in\cK_B} \lVert \rho_B \rVert_{L_q(B)}}$ where $\frac{1}{p} + \frac{1}{q} = 1$. Note that $\sup_{\tilde{a}_B\in\cK_B} \lVert \rho_B \rVert_{L_q(B)} < \infty$, as $\tilde{a}_B$ contains $\rho_B$ and is uniformly upper bounded, 
Furthermore, since  $\rho_B$  is lower bounded by $\rho_{\inf}$, we have the approximation 
$$
\sup_{\tilde{a}_B\in\cK_B}\Bigl\lVert \frac{\tilde{a}_B \cdot \xi_j}{ \rho_B }  -   \PP(\tilde{a}_B) \Bigr\rVert_{L_p(\Omega)} 
\leq 
\sup_{\tilde{a}_B\in\cK_B}\Bigl\lVert \frac{\tilde{a}_B \cdot \xi_j}{\max\{\rho_B,\rho_{\inf}\}}  -   \PP(\tilde{a}_B) \Bigr\rVert_{L_p(B)}  
\leq \frac{\epsilon}{\sup_{\tilde{a}_B\in\cK_B} \lVert \rho_B \rVert_{L_q(B)}}.
$$
By applying Hölder's inequality, we obtain \cref{dot-product-approximation}. The term $\int_{\Omega} \PP(\tilde{a}_B) \rho(x; \Omega) \dd x$ corresponds to the point cloud neural layer, which consists solely of a constant integral kernel $\kappa(x, y, f_{\rm  in}) = 1$, a special case of the Fourier kernel. Therefore, combining the lift layer with the point cloud neural layer gives the approximation of the first mapping in \eqref{eq:two-mappings}.

As for the second mapping in \eqref{eq:two-mappings}, we note that since $\tilde{a}_B \in \cK_B$ is upper bounded, the image of the first mapping is compact in $[-M,M]^J$ for sufficiently large $M > 0$.
The second mapping defines a continuous function $\beta : \R^J \to R$, $(c_1,\cdots, c_J) \mapsto \alpha(\sum_{j=1}^J c_j \xi_j)$, which can be approximated by the projection layer, on the compact domain $[-M,M]^J$. Here $\xi_j$ can be approximated using the lifting layer and then directed towards the final projection. Finally, by using the continuity of $\alpha$ and choosing sufficient large $J$, we prove the modified Lemma A.9.

\end{proof}

We are now prepared to prove Theorem \ref{prop:uap}.

\begin{proof}[Proof of Theorem \ref{prop:uap}]
    We demonstrate that the Point Cloud Neural Operator $\G_\theta$ , comprising a lifting layer (with several sublayers), a single point cloud neural layer, and a final projection layer (with several sublayers), is sufficient for approximation
    \begin{equation}
        \sup_{\tilde{a}_B \in \cK_B} \lVert \G^{\dagger}(\tilde{a}_B) - \G_{\theta}(\tilde{a}_B)\rVert_{L_{p}(B, \R^{d_u})} \leq \epsilon.
    \end{equation}
    And the restriction on $\Omega$ gives \eqref{eq:prop:uap}.

Following Lemma \ref{lem:expansion}, it suffices to approximate any continuous functional $\alpha: L_1(B,\R^{d_a+d+1})\to \R$ defined on $\cK_B$ and function $\eta \in L_p(B,\R^{d_u})$ by the proposed Point Cloud Neural Operator. For any $\alpha$, there exists a 
Point Cloud Neural Operator 
$\alpha_\theta$ given by the Lemma \ref{lem:alpha-approximation}, which approximates $\alpha$ uniformly. For $\eta$ as a $L_p$ function, the approximation is guaranteed by the classical approximation theory of neural networks.

Then, for any $j\in \{1, \cdots, J\}$ we aim to construct a Point Cloud Neural Operator $\G_{j,\theta}:\cK_B \to L_p(B, \R^{d_u})$ such that the following holds:
\begin{equation}
\label{eq:singlechannel}
    \sup_{\tilde{a}_B\in\cK_B} \Bigl\lVert \alpha_j(\tilde{a}_B) \eta_j - \G_{j,\theta}(\tilde{a}_B) \Bigr\rVert_{L_p(B, \R^{d_u})} \leq \epsilon / (2J).
\end{equation}

First, let $
M_\alpha = \sup_{\tilde{a}_B \in \cK_B}|\alpha_{j}(\tilde{a}_B)|$
and we can find a smoothed approximation $\tilde{\eta}_j\in C^{\infty}(B,\R^{d_u})$ satisfying\[
\|\tilde{\eta}_j - \eta_j\|_{L_p(B,\R^{d_u})}\leq \epsilon/(8JM_{\alpha}).
\]
Then we define $M_\eta=\|\tilde{\eta}_j\|_{L_p(B)}$. From Lemma \ref{lem:alpha-approximation} we know that there exists a Point Cloud Neural Operator $\alpha_{j,\theta}$ such that 
$$\sup_{\tilde{a}_B \in \cK_B}| \alpha_j(\tilde{a}_B) - \alpha_{j,\theta}(\tilde{a}_B) | \leq \epsilon / (8JM_\eta).$$
Also let $M_\alpha'=\sup_{\tilde{a}_B \in \cK_B}|\alpha_{j,\theta}(\tilde{a}_B)|$. Based on the approximation theory, there exists a neural network $\eta_{j,\theta}:B\to \R^{d_u}$ such that
\[
\|\tilde{\eta}_j-\eta_{j,\theta}\|_{L_p(B,\R^{d_u})}\leq \epsilon/(8JM_\alpha').
\]
Note that $\tilde{\eta}_j$ is in $C^{\infty}(B,\R^{d_u})$ so that the output of the neural network can be bounded, which is denoted as $M_{\eta}'=\|\eta_{j,\theta}\|_{L_\infty(B,\R^{d_u})}$. The Point Cloud Neural Operator is defined by combining $\alpha_{j,\theta}$ and $\eta_{j,\theta}$. Specifically,  $\eta_{j,\theta}$ can be placed in the lifting layer, with its output passed directly to the final layer. Then, the two operators $\alpha_{j,\theta}$ and $\eta_{j,\theta}$ can be combined with an additional final projection layer which approximates the multiplication:
\[
\sup_{x \in [-M_\alpha', M_\alpha'], y \in [-M_\eta',M_\eta']^{d_u}}
\|p_{\theta}(x,y) - xy\| \leq \delta.
\]
By the definitions of $M_\alpha'$ and $M_\eta'$, we know the output of $(\alpha_{j,\theta}, \eta_{j,\theta})$ locates in the cube $[-M_\alpha', M_\alpha'] \times [-M_\eta',M_\eta']^{d_u}$. Therefore, by choosing $\delta$ sufficiently small, we have\[
\|p_{\theta}(\alpha_{j,\theta}, \eta_{j,\theta}) - \alpha_{j,\theta}\eta_{j,\theta}\|_{L_p(B,\R^{d_u})} \leq \epsilon/(8J).
\]
Finally, the Point Cloud Neural Operator is defined as $\G_{j,\theta}(\tilde{a}_B) = p_{\theta}(\alpha_{j,\theta}(\tilde{a}_B), \eta_{j,\theta})$, and we have
\begin{equation}
\begin{aligned}
    \|\alpha_j(\tilde{a}_B) \eta_j - &\G_{j,\theta}(\tilde{a}_B)\|_{L_p(B,\R^{d_u})} \\
    \leq& \|\alpha_j(\tilde{a}_B) \eta_j - \alpha_j(\tilde{a}_B) \tilde{\eta}_j\|_{L_p(B,\R^{d_u})}+ \|\alpha_j(\tilde{a}_B) \tilde{\eta}_j - \alpha_{j,\theta}(\tilde{a}_B) \tilde{\eta}_j\|_{L_p(B,\R^{d_u})}\\
    &+ \|\alpha_{j,\theta}(\tilde{a}_B) \tilde{\eta}_j - \alpha_{j,\theta}(\tilde{a}_B) \eta_{j,\theta}\|_{L_p(B,\R^{d_u})}+ \|\alpha_{j,\theta}(\tilde{a}_B) \eta_{j,\theta} - \G_{j,\theta}(\tilde{a}_B)\|_{L_p(B,\R^{d_u})}\\
    \leq& M_\alpha \|\tilde{\eta}_j - \eta_j\|_{L_p(B,\R^{d_u})} + M_\eta | \alpha_j(\tilde{a}_B) - \alpha_{j,\theta}(\tilde{a}_B) |\\
    &+ M_\alpha' \|\tilde{\eta}_j-\eta_{j,\theta}\|_{L_p(B,\R^{d_u})} + \|\alpha_{j,\theta}(\tilde{a}_B)\eta_{j,\theta} - p_{\theta}(\alpha_{j,\theta}(\tilde{a}_B), \eta_{j,\theta})\|_{L_p(B,\R^{d_u})}\\
    \leq& \epsilon/(2J).
\end{aligned}
\end{equation}

Therefore, we successfully construct a Point Cloud Neural Operator $\G_{j,\theta}:\cK_B \to L_p(B, \R^{d_u})$ that satisfies \eqref{eq:singlechannel}, for any $j\in \{1, \cdots, J\}$. We then define a new Point Cloud Neural Operator $\G_\theta:=\sum_{j=1}^J \G_{j,\theta}$. Combining \eqref{eq:singlechannel} and Lemma \ref{lem:expansion} leads to
\begin{equation}
    \begin{aligned}\lVert \G_B^{\dagger}(\tilde{a}_B) - \G_{\theta}(\tilde{a}_B)\rVert_{L_p(B, \R^{d_u})}
        &\leq \left\lVert \G_B^{\dagger}(\tilde{a}_B) - \sum_{j=1}^J \alpha_j(\tilde{a}_B) \eta_j\right\rVert_{L_p(B, \R^{d_u})} + \left\lVert \sum_{j=1}^J \alpha_j(\tilde{a}_B) \eta_j - \G_{\theta}(\tilde{a}_B)\right\rVert_{L_p(B, \R^{d_u})}\\
        &\leq \left\lVert \G_B^{\dagger}(\tilde{a}_B) - \sum_{j=1}^J \alpha_j(\tilde{a}_B) \eta_j\right\rVert_{L_p(B, \R^{d_u})} + \sum_{j=1}^J \left\lVert \alpha_j(\tilde{a}_B) \eta_j - \G_{j,\theta}(\tilde{a}_B)\right\rVert_{L_p(B, \R^{d_u})}\\
        &\leq \epsilon/2 + J \cdot \epsilon / (2J)\\
        &= \epsilon.
    \end{aligned}
\end{equation}

\end{proof}

\section{Comparison Study}
\label{sec:comparison}

In this section, we present a detailed comparison of our method with other state-of-the-art neural operators on benchmark problems introduced in \cite{li2020fourier}: the Burgers' equation and the Darcy flow equation. 
The one-dimensional periodic Burgers' equation with viscosity $\nu=0.1$ is defined as follows:
\begin{equation}
        \begin{aligned}
            \frac{\partial}{\partial t}u(x,t)+\frac{1}{2}\frac{\partial}{\partial x}u^2(x,t) &=\nu \frac{\partial^2}{\partial x^2}u(x,t),    & &x\in(0,1),t\in(0,1], \\
            u(x,0) &=u_0(x),& &x\in(0,1).
        \end{aligned}
\end{equation}
The objective is to learn the operator $\mathcal{G}^{\dagger}:u_0(x)\mapsto u(x,1)$, which maps the random initial condition to the solution at time $t=1$.
The second problem is the two-dimensional Darcy flow equation:
\begin{equation}
        \begin{aligned}
            -\nabla \cdot (a(x)\nabla u(x)) &= 1, & & x\in (0,1)^2,\\
            u(x) &= 0, & & x\in \partial (0,1)^2.
        \end{aligned}
\end{equation}
Here, the goal is to learn the operator $\mathcal{G}^{\dagger}: a(x)\mapsto u(x)$, which maps the random discontinuous coefficient field $a(x)$ to the solution $u(x)$.
Although the computational domains for these problems are fixed unit cubes, and the point clouds are based on uniform grids, these benchmarks are widely used to evaluate neural operators.

The architecture of our Point Cloud Neural Operator remains consistent with the configuration described in \cref{sec:numerics}. For Burgers' equation, we retain 32 Fourier modes and train the model for 5000 epochs. For the Darcy flow equation, we retain 16 Fourier modes in each spatial direction and train for 1000 epochs. Following the experimental setup in \cite{li2020fourier}, we use 1000 samples for training and 200 samples for testing for both datasets, evaluating performance across various resolutions with different downsampling rates.
The relative $L_2$ test errors for these two problems are summarized in \cref{tab:burgers,tab:darcy}, respectively. 
For comparison, we also include results of other state-of-the-art neural operators, which report relative $L_2$ test errors on datasets of the same size. These methods include the FNO from ~\cite{li2020fourier}, the Factorized FNO (F-FNO) from \cite{tran2021factorized}, the Unet enhanced FNO (U-FNO) from \cite{wen2022u},  the Gabor-Filtered FNO (GFNO) from \cite{qi2024gabor}, the Galerkin Transformer (GT) from\cite{cao2021choose}, the Multigrid Neural Operator (trained with relative $H_1$ error), denoted as M{\scriptsize G}NO from \cite{he2023mgno}, and the Position-induced Transformer from \cite{junfengpositional}.
This comparison highlights that our proposed PCNO performs consistently across various resolutions without the need for hyperparameter tuning. Furthermore, despite not leveraging structural information inherent to these problems, PCNO achieves competitive performance compared to these neural operators that explicitly exploit such information. 
However, we also observed that PCNO requires more GPU memory and longer training times per epoch compared to similarly sized FNO models. This increased resource demand is attributed to the differential operator, which involves an additional message-passing step to compute the gradient. Enhancing the efficiency of our implementation will be a key focus of future work.

\begin{table}[htbp]
    \begin{center}
        \begin{tabular}{|c|cccc|}
        \hline
            \diagbox{Networks}{Resolutions}  & $256$ & $512$ & $1024$ & $2048$  \\ \hline
            FNO \tablefootnote{We train the FNO using the same configuration as PCNO, achieving errors that are an order of magnitude smaller than those reported in \cite{li2020fourier}.}  & 1.63  & 1.63  & 1.63  & 1.67 \\ \hline
            U-FNO \cite{qi2024gabor} & 0.91  & 0.88  & 0.72  & 0.60 \\ \hline
            GFNO \cite{qi2024gabor}  & \textbf{0.49}  & \textbf{0.49}  & \textbf{0.47}  & \textbf{0.48} \\ \hline
            GT \cite{cao2021choose} & --  & 1.20  & --  & 1.15 \\ \hline
            PCNO  &  \underline{0.67}  &  \underline{0.62}  & \underline{0.57}  &  \underline{0.55} \\
            \hline
        \end{tabular}
    \end{center}
    \caption{Burgers' equation: Relative $L_2$ error ($\times 10^{-3}$) between the reference and predicted results, with the best and second-best values highlighted in bold and underlined.}
    \label{tab:burgers}
\end{table}

\begin{table}[htbp]
    \begin{center}
        \begin{tabular}{|c|cccc|}
        \hline
            \diagbox{Networks}{Resolutions}  & $85\times85$ & $141\times141$ & $211\times211$ & $421\times421$\\ \hline
            FNO \cite{qi2024gabor}   & 0.78  & 0.71  & 0.73  & 0.74 \\ \hline
            F-FNO \cite{qi2024gabor} & 0.68  & 0.63  & 0.64  & 0.65 \\ \hline
            U-FNO \cite{qi2024gabor}& 0.67  & 0.64  & 0.74  & 0.78 \\ \hline
            GFNO  \cite{qi2024gabor}&  \underline{0.60}  & \underline{0.55}  & 0.50  & \underline{0.55} \\ \hline
            GT \cite{cao2021choose} & -- & 0.84   & 0.84  & -- \\ 
            \hline
            M{\scriptsize G}NO \cite{he2023mgno} & -- & --   & \textbf{0.18}  & -- \\ 
            \hline
            PiT \cite{junfengpositional} & -- & --   & 0.49  & -- \\ 
            \hline
            PCNO  &   \textbf{0.55}  &  \textbf{0.48}  &  \underline{0.46}  & \textbf{0.49} \\
            \hline
        \end{tabular}
    \end{center}
    \caption{Darcy flow equation:  Relative $L_2$ error ($\times 10^{-2}$) between the reference and predicted results, with the best and second-best values highlighted in bold and underlined.}
    \label{tab:darcy}
\end{table}

\section{Data Visualization}
\label{sec:visualization}
In this section, we present visualizations of the datasets generated in this work, which are used to evaluate PCNO. These datasets include samples from the Darcy flow problem (\cref{ssec:darcy}), the flow over an airfoil problem (\cref{ssec:airfoil_flap}), and parachute dynamics (\cref{ssec:parachute}). These visualizations are shown in \cref{fig:geometries}, illustrate the diverse geometric configurations associated with each problem.

\begin{figure}
    \centering
    \includegraphics[width=0.8\linewidth]{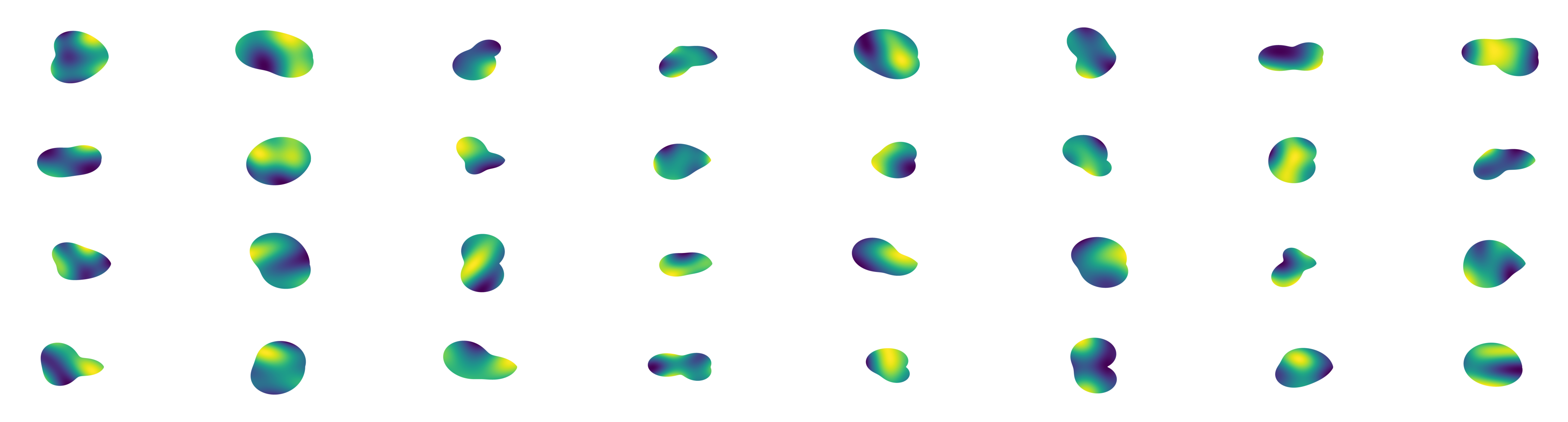}
     \includegraphics[width=0.8\linewidth]{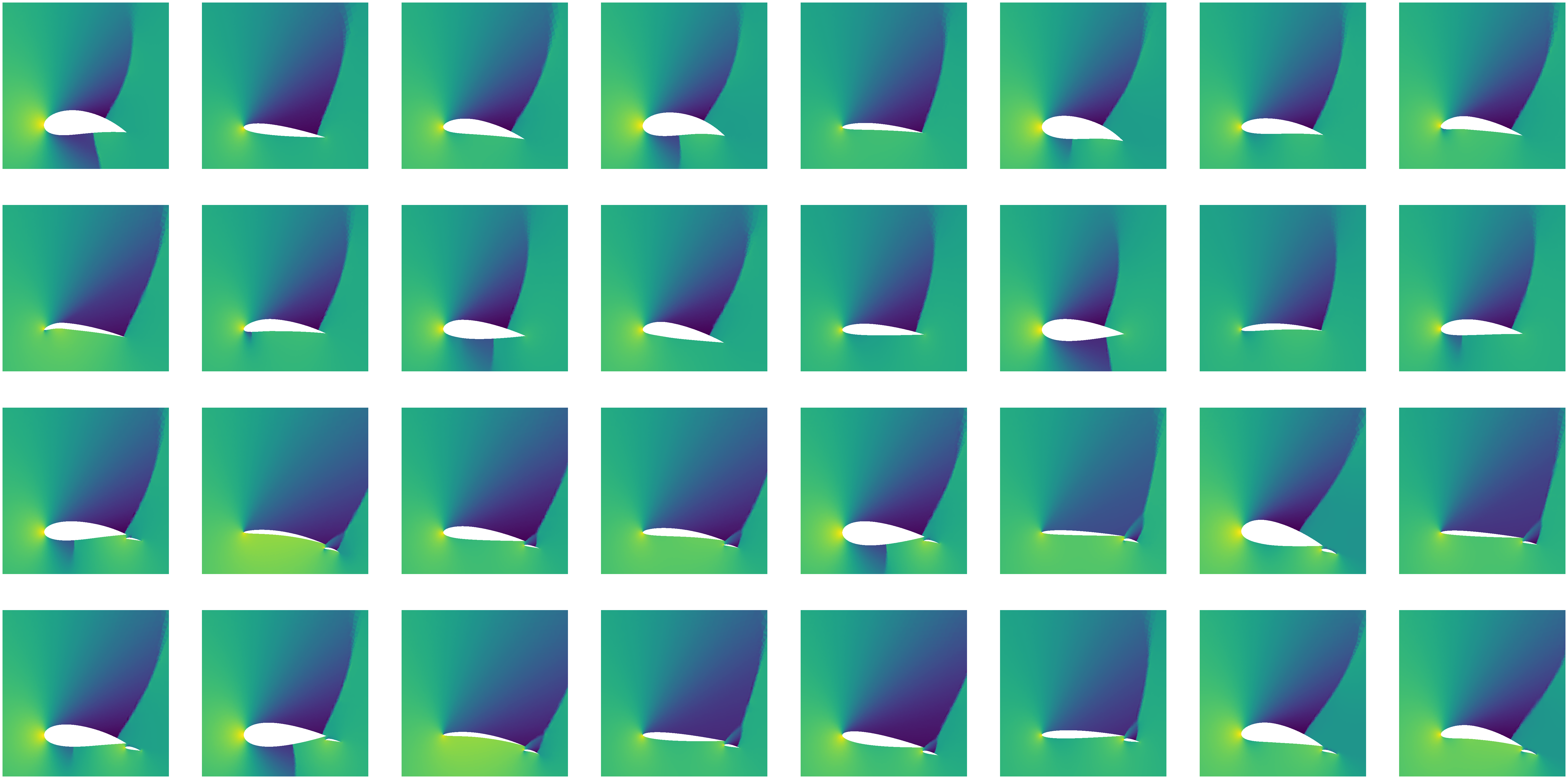}
     \includegraphics[width=0.8\linewidth]{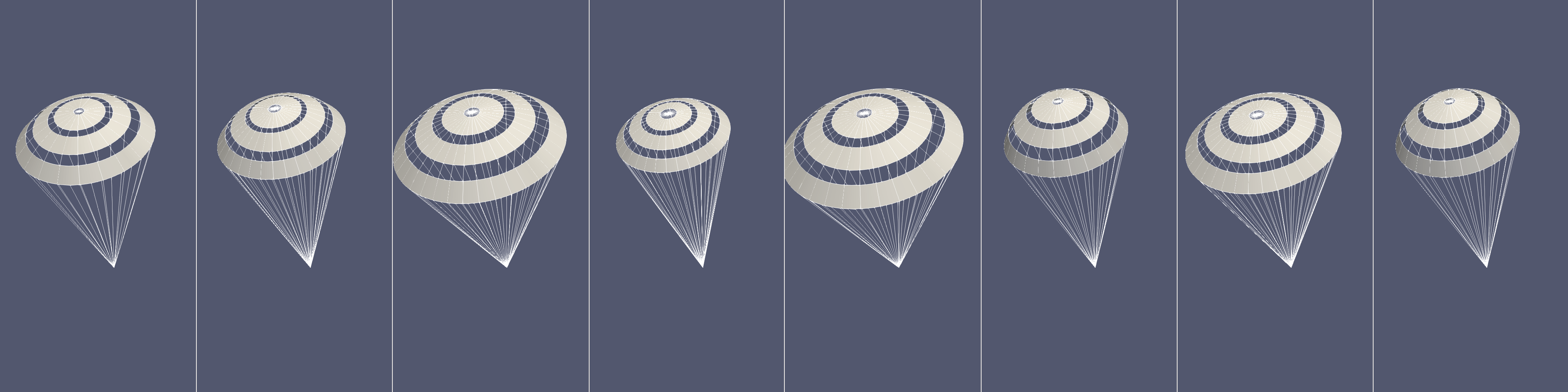}
    \includegraphics[width=0.8\linewidth]{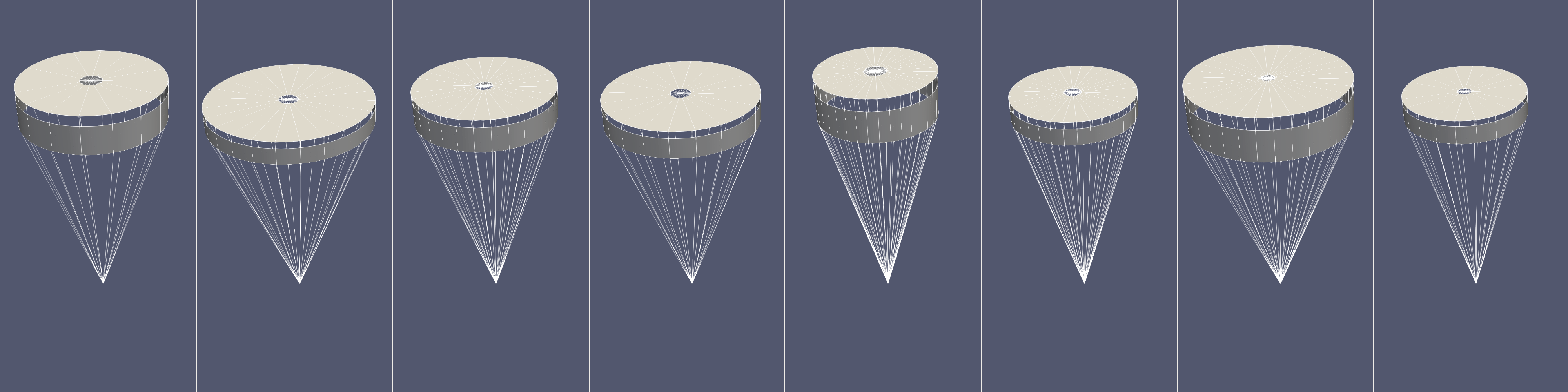}
    \includegraphics[width=0.8\linewidth]{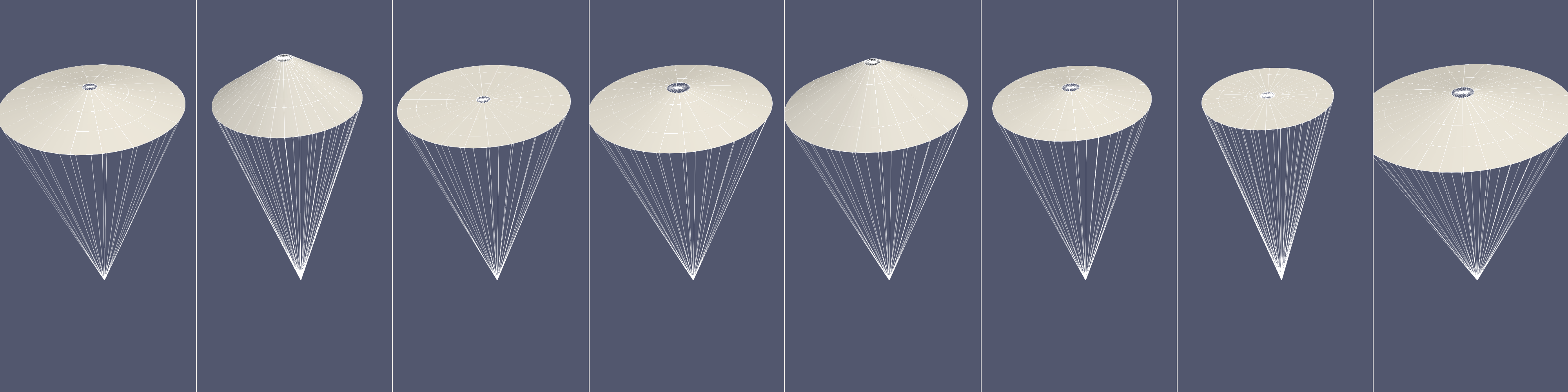}
    \caption{Sample visualizations from the dataset: (top to bottom) Darcy flow problem (\cref{ssec:darcy}), flow over an airfoil problem (\cref{ssec:airfoil_flap}), and parachute dynamics (\cref{ssec:parachute}).}
    \label{fig:geometries}
\end{figure}

\bibliographystyle{unsrt}
\bibliography{references}

\end{document}